\newtheorem{dummy}{}[section]
\newtheorem{theorem}[dummy]{Theorem}
\newtheorem{proposition}[dummy]{Proposition}
\newtheorem{lemma}[dummy]{Lemma}
\newtheorem{corollary}[dummy]{Corollary}
\theoremstyle{definition}
\newtheorem{definition}[dummy]{Definition}
\newtheorem{remark}[dummy]{Remark}
\newtheorem{convention}[dummy]{Convention}
\newcommand{\vir}{\mathrm{vir}}
\newcommand{\mb}{\mathbf}
\newcommand{\bb}{\mathbb}
\newcommand{\ca}{\mathcal}
\newcommand{\sslash}{\mathbin{/\mkern-6mu/}}
\title{The Level Structure in Quantum $K$-theory and Mock Theta Functions}
\author[Y.~Ruan]{Yongbin Ruan}
\address{Department of Mathematics, University of Michigan, 2074 East Hall, 530 Church Street, Ann Arbor, MI 48109, USA}
\email{ruan@umich.edu}
\author[M.~Zhang]{Ming Zhang}
\address{Department of Mathematics, University of Michigan, 2074 East Hall, 530 Church Street, Ann Arbor, MI 48109, USA}
\email{zhangmsq@umich.edu}
\begin{document}

 
\maketitle

\begin{abstract}
This is the first in a sequence of papers to develop the theory of levels in quantum $K$-theory and study its applications. In this paper, we give an adelic characterization of the range of the $J$-function in quantum $K$-theory with level structure. As an application, we prove a mirror theorem for permutation-equivariant quantum $K$-theory with level structures of toric varieties. In the study of the mirrors of some simple examples, we see the surprising appearance of Ramanujan's mock theta functions.\end{abstract}

\section{Introduction}
\subsection{Overview}
More than a decade ago, quantum $K$-theory was introduced by Givental and Lee \cite{givental3,lee1} as the
$K$-theoretic analog of quantum cohomology. 
Its recent revival stems partially from a physical interpretation of quantum 
$K$-theory as a 3D-quantum field theory in the 3-manifold
of the form $S^1\times \Sigma$ (see \cite{willett,jockers}). Because of this mysterious physical connection, the B-model counterpart of quantum $K$-theory is $q$-hypergeometric series, itself a classical subject. The above connection was recently confirmed by Givental \cite{givental16} as the mirror of the so-called \emph{$J$-function} of the permutation-equivariant
quantum $K$-theory. 

Classically, $K$-theory is more closely related to representation theory, comparing to cohomology theory. It is natural to revisit quantum $K$-theory from the representation theoretic point of view. In fact, a variant of quantum $K$-theory was
already studied by Aganagic, Okounkov, Smirnov and their collaborators \cite{okounkov,okounkov2,okounkov3,smirnov1,smirnov2,smirnov3,su} in relation to quantum groups. One of the
predominant features of representation theory is the existence of an additional parameter 
called the level. A natural question is whether it is possible to extend the current version of quantum
$K$-theory to include this notion of level. In this paper, we answer the question affirmatively
in the context of quasimap theory. This is the first in a sequence of papers to develop the theory of levels in
quantum $K$-theory and study its applications.

Our motivating example is an old physical result of Witten \cite{witten1} in the early '90s which 
relates the quantum cohomology ring of the Grassmannian to the Verlinde algebra. Early explicit physical computations \cite{gepner1,vafa,intriligator} indicate that they are isomorphic as algebras, but have different pairings. In \cite{witten1}, Witten gave a conceptual explanation of the isomorphism, by proposing an equivalence between 
the quantum field theories which govern the Verlinde algebra and the quantum cohomology of the Grassmannian. His physical derivation of the equivalence naturally leads to a mathematical problem that
these two objects are conceptually isomorphic (without referring to the detailed computations). A great deal of work has been done by Agnihotri \cite{agnihotri}, Marian-Oprea \cite{marian1,marian2,marian3}, and Belkale \cite{belkale}. However, to the best of our knowledge, a complete conceptual proof of the equivalence is missing.

Assuming a basic knowledge of quantum $K$-theory, a key and yet more or less trivial observation is  that Verlinde algebra is a \emph{K-theoretic invariant}.
To be more precise, let $X$ be a smooth projective variety. Suppose that $\overline{\ca{M}}_{g,k}(X, \beta)$
is the moduli space of stable maps to $X$. Quantum cohomology studies integrals of the form
$$\int_{[\overline{\ca{M}}_{g,k}(X, \beta)]^{\vir}}\alpha,$$
where 
$[\overline{\ca{M}}_{g,k}(X, \beta)]^{\vir}$ is the so-called \emph{virtual fundamental cycle} and $\alpha$ is a ``tautological" cohomology class. In quantum $K$-theory, we replace the virtual fundamental cycle by the \emph{virtual structure sheaf} ${\ca{O}}^{\vir}_{\overline{\ca{M}}_{g,k}(X, \beta)}$. We also replace the integral by the holomorphic Euler characteristic 
$$\chi\big(\overline{\ca{M}}_{g,k}(X, \beta), E\otimes{\ca{O}}^{\vir}_{\overline{\ca{M}}_{g,k}(X, \beta)}\big)$$
where $E$ is some natural $K$-theory class on $\overline{\ca{M}}_{g,k}(X, \beta)$. For the Verlinde algebra, the relevant moduli space is the moduli space of semistable parabolic $U(n)$-bundles ${\ca{M}}_{U(n)}(\alpha_1,\cdots, \alpha_k)$
on a fixed genus $g$ marked curve $(C, p_1,\cdots, p_k)$ with parabolic structure at $p_i$  indexed by $\alpha_i\in V_l(U(n))$. Here, $l$ is a non-negative integer and $V_l(U(n))$ denotes the level-$l$ Verlinde algebra. A new ingredient is a certain determinant line bundle, denoted by $\det$, over the moduli space ${\ca{M}}_{U(n)}(\alpha_1,\cdots, \alpha_k)$. The level-$l$ Verlinde algebra
calculates the holomorphic Euler characteristic
$$\langle \alpha_1, \cdots, \alpha_k\rangle^{l}_{g, \text{Verlinde}}=\chi({\ca{M}}_{U(n)}(\alpha_1, \cdots, \alpha_k), \text{det}^l).$$

Based on the above description, the Verlinde algebra is clearly a $K$-theoretic object, and we should compare it with the quantum $K$-theory of the Grassmannian (with an appropriate notion of levels). 
 Let $\mathfrak{B}un_G$ be the moduli stack of principal bundle over curves. Let $\pi:\mathfrak{C}_{\mathfrak{B}un_{g,k}}\rightarrow\mathfrak{B}un_G$ be the universal curve and let $\mathfrak{P}\rightarrow\mathfrak{C}_{\mathfrak{B}un_{g,k}}$ be the universal principal bundle. Given a finite-dimensional representation $R$ of $G$, we consider the inverse determinant of cohomology 
\[
\text{det}_R:=\big(\text{det}\,R\pi_*(\mathfrak{P}\times_G R)\big)^{-1}.
\] 
It is a line bundle over $\mathfrak{B}un_G$. Suppose $X=Z\sslash G$ is a GIT quotient. Let ${\ca{Q}}^{\epsilon}$ be the moduli stack of $\epsilon$-stable quasimaps to $X=Z\sslash G$ (see Section \ref{quasimapthy}). Then there is a natural forgetful morphism $\mu:\ca{Q}^{\epsilon}\rightarrow \mathfrak{Bun}_G$. We define the level-$l$ determinant line bundle as
$${\ca{D}}^{R,l}=\mu^*(\text{det}_R)^l.$$ 
We will often refer to $\ca{D}^{R,l}$ as the \emph{level structure}. In general, when $X$ is a smooth complex projective variety, but not a GIT quotient, one can still define determinant line bundles over the moduli space of stable maps $\overline{\ca{M}}_{g,k}(X,\beta)$ as follows. Let $\ca{R}$ be a vector bundle over $X$. Let $\pi:\ca{C}\rightarrow\overline{\ca{M}}_{g,k}(X,\beta)$ be the universal curve and let $\text{ev}:\ca{C}\rightarrow X$ be the universal evaluation map. We define the level-$l$ determinant line bundle as
\[
{\ca{D}}^{l}:=\big(\text{det}\,R\pi_*(\text{ev}^*\ca{R})\big)^{-l}.
\]
This definition agrees with the previous one when $X$ is a GIT quotient (see Definition \ref{defdet1} and Remark \ref{defdet2}). 

With the above definition of the level-$l$ determinant line bundle $\ca{D}^{R,l}$, we can define the level-$l$ quantum $K$-invariants and quasimap invariants by twisting with $\ca{D}^{R,l}$ (see Section \ref{quantumk}). The ordinary quantum $K$-theory corresponds to the case $l=0$. One of main purpose of this article is to verify that 
the level-$l$ quantum $K$-theory satisfies all the axioms of the ordinary quantum $K$-theory (see Section \ref{property}). 

One can use  irreducible $\text{GL}_n(\bb{C})$-representations to define two sets of invariants: Verlinde invariants from the theory of (semi)stable parabolic vector bundles and level-$l$ quantum K-theory invariants of the Grassmannian $\text{Gr}(n, n+l)$. The \emph{K-theoretic Verlinde/Grassmannian correspondence} describes a conjectural formula between these two sets of invariants (see the precise formula in \cite{RZ1}).

\subsection{Mirror theorem and mock theta functions}
The proof of the Verlinde/Grassmannian correspondence will be discussed in different papers. The other main results of this paper are  various mirror theorems, in the same style as the recent work of Givental \cite{givental11,givental12,givental13,givental14,givental15,givental16,givental17,givental18,givental19,givental21,givental22}. In the study of quantum $K$-theory with level structures, we see the surprising appearance of Ramanujan's mock theta functions in some of the simplest examples. 

Let $X$ be a smooth complex projective variety and let $\ca{R}$ be a vector bundle over $X$. When $X=Z\sslash G$ is a GIT quotient, we assume $\ca{R}$ is of the form $(Z\times R)\sslash G$, with $R$ a finite-dimensional representation of $G$. By abuse of terminology, we refer to the vector bundle $\ca{R}$ as the ``representation'' $R$. Let $Q$ be the Novikov variables. We fix a $\lambda$-algebra $\Lambda$ which is equipped with Adams operations $\Psi^i,\,i=1,2,\dots$. Let $\{\phi_a\}$ be a basis of $K^0(X)\otimes\bb{Q}$ and let $\{\phi^a\}$ be the dual basis with respect to the twisted pairing (\ref{eq:twistedpairing}). Let $q$ be a formal variable and let $\mb{t}(q)$ be a Laurent polynomial in $q$ with coefficients in $K^0(X)\otimes\bb{Q}$. The \emph{permutation-equivariant} $K$-theoretic $J$-function $\ca{J}^{l}_{S_\infty}(\mb{t}(q),Q)$ of level $l$ and representation $R$ is defined by
\[
\ca{J}_{S_\infty}^{R,l}(\mb{t}(q),Q):=1-q+\mb{t}(q)+\sum_a\sum_{\beta\neq 0}Q^\beta\phi^a\bigg\langle\frac{\phi_a}{1-qL},\mb{t}(L),\dots,\mb{t}(L)\bigg\rangle_{0,k+1,\beta}^{R,l,S_k}.
\]
Here $\langle\,\cdot\,\rangle_{0,k+1,\beta}^{R,l,S_k}$ denotes the permutation-equivariant quantum $K$-invariants of level $l$ and $L$ denote the cotangent line bundles. The $J$-function $\ca{J}_{S_\infty}^{R,l}$ can be viewed as elements in the loop space $
\ca{K}$ defined by
\[
\ca{K}:=[K^0(X)\otimes\bb{C}(q)]\otimes\bb{C}[[Q]],
\]
where $\bb{C}(q)$ denotes the field of complex rational functions in $q$. There is a natural Lagrangian polarization  $\ca{K}=\ca{K}_+\oplus\ca{K}_-$, where $\ca{K}_+$ consists of Laurent polynomials and $\ca{K}_-$ consists of reduced rational functions regular
at $q = 0$ and vanishing at $q =\infty$. We denote by $\ca{L}^{R,l}_{S_\infty}$ the range of $\ca{J}_{S_\infty}^{R,l}$, i.e., $$\ca{L}_{S_\infty}^{R,l}=\cup_{\mb{t}(q)\in\ca{K}_+}\ca{J}_{S_\infty}^{R,l}(\mb{t}(q),Q)\subset\ca{K}.$$

Due to the stacky structure of the moduli space of stable maps, the $J$-function $\ca{J}_{S_\infty}^{R,l}(\mb{t}(q),Q)$, as a function in $q$, has poles at roots of unity. The main technical tool is a generalization of Givental-Tonita's \emph{adelic characterization} of points on the cone $\ca{L}^{R,l}_{S_\infty}$ with the presence of level structure, i.e., we describe the Laurent expansion of $\ca{J}_{S_\infty}^{R,l}$ at each primitive root of unity in terms of certain twisted \emph{fake} quantum $K$-theory. The precise statement is rather technical, and we present it in Theorem \ref{levelladele}. When $l=0$, the determinant line bundle $\ca{D}^{R,l}$ is trivial, and we recover the ordinary quantum $K$-theory. The adelic characterization of the cone in the ordinary quantum $K$-theory was introduced in \cite{givental2}, and its generalization to the permutation-equivariant theory is given in \cite{givental13}. The proofs of all these results are based on application of the virtual Kawasaki's Riemann-Roch formula to moduli spaces of stable maps (see Section \ref{conechar}).

Let $\ca{L}_{S_\infty}$ denote the range of the permutation-equivariant big $J$-function in ordinary quantum $K$-theory (i.e., with trivial level structure). As an application of Theorem \ref{levelladele}, we prove that certain ``determinantal '' modifications of points on $\ca{L}_{S_\infty}$ lie on the cone $\ca{L}^{R,l}_{S_\infty}$ of quantum $K$-theory of level $l$.
\begin{theorem}\label{detmodify}
If \[
I=\sum_{\beta\in\emph{Eff}(X)}I_\beta Q^\beta
\]
lies on $\ca{L}_{S_\infty}$, then the point
\[
I^{R,l}:=\sum_{\beta\in\emph{Eff}(X)}I_\beta Q^\beta\prod_i\big(L_i^{-\beta_i}q^{(\beta_i+1)\beta_i/2}\big)^l
\]
lies on the cone $\ca{L}_{S_\infty}^{R,l}$ of permutation-equivariant quantum $K$-theory of level $l$. Here, $\emph{Eff}(X)$ denotes the semigroup of effective curve classes on $X$, $L_i$ are the $K$-theoretic Chern roots of $\ca{R}$, and $\beta_i:=\int_\beta\,c_1(L_i)$.
\end{theorem}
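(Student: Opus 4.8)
The plan is to deduce Theorem~\ref{detmodify} from the adelic characterization, Theorem~\ref{levelladele}. That theorem describes $\ca{L}^{R,l}_{S_\infty}$ by a list of local conditions: a series $f=\sum_\beta f_\beta Q^\beta$ lies on $\ca{L}^{R,l}_{S_\infty}$ if and only if (i) each coefficient $f_\beta$ is a rational function of $q$ with poles only at $q=0$, $q=\infty$, and at roots of unity, and (ii) for every primitive $m$-th root of unity $\zeta$ (including the ``fake'' case $m=1$) a prescribed transform of the Laurent expansion of $f$ at $q=\zeta$ --- built from the substitution $q\mapsto\zeta q^{1/m}$, the Adams operation $\Psi^m$, and an explicit $(\zeta,m,\beta)$-monomial coming from the restriction of $\ca{D}^{R,l}$ to the relevant $\bb{Z}_m$-fixed Kawasaki stratum --- lies on a level-$l$-twisted fake cone $\ca{L}^{R,l,\mathrm{fake}}_\zeta$. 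For $l=0$ all of the level data is trivial and this specialises to the adelic characterization of $\ca{L}_{S_\infty}$ in ordinary permutation-equivariant quantum $K$-theory. The strategy is therefore to verify (i) and (ii) for $I^{R,l}$ from their validity for $I$, which reduces the whole statement to a computation of $\ca{D}^{R,l}$ on the Kawasaki strata.

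Condition (i) for $I^{R,l}$ is immediate: for each fixed $\beta$, the modifying factor $\prod_i\big(L_i^{-\beta_i}q^{(\beta_i+1)\beta_i/2}\big)^l$ is a Laurent monomial in $q$ whose coefficient is a unit of $K^0(X)\otimes\bb{Q}$, so multiplication by it preserves rationality in $q$, introduces no new poles, and fixes the $\beta=0$ term; in particular the dilaton shift $1-q$ is untouched. Thus $I^{R,l}$ inherits from $I$ the pole structure required by (i).

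For condition (ii) one must compare the level-$l$-twisted fake cones with the untwisted ones and match the discrepancy with the determinantal factor. The level-$l$ twist inserts $\mathrm{ch}(\ca{D}^{R,l})=\exp\!\big(-l\,c_1(\det R\pi_*\ev^*\ca{R})\big)$ into the virtual holomorphic Euler characteristics underlying the fake theory; by Grothendieck--Riemann--Roch this is a multiplicative twist attached to a class depending only on $c_1$, and its effect on a point of the fake cone can be computed directly from the restriction of $\det R\pi_*\ev^*\ca{R}$ to the fixed loci (equivalently, by the quantum Riemann--Roch analysis of this twist). One finds, stratum by stratum, that on the degree-$\beta$ part it amounts to multiplication by $\prod_i\big(L_i^{-\beta_i}q^{(\beta_i+1)\beta_i/2}\big)^l$. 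The model calculation is the torus-equivariant determinant of cohomology of a line bundle on $\bb{P}^1$: if $f^*L_i=\O(\beta_i)$ on a source $\bb{P}^1$ carrying the loop-rotation action, then $\det R\Gamma\big(\bb{P}^1,\O(\beta_i)\big)$ equals $q^{(\beta_i+1)\beta_i/2}$ times the appropriate power of $L_i$, the sign of that power being fixed by the orientation conventions used earlier; taking the product over the Chern roots of $\ca{R}$ and the $l$-th power recovers the factor in the statement. Combined with the compatibility of the determinantal modification with the transforms appearing in (ii), this shows that the transform at $\zeta$ of $I^{R,l}$ lies on $\ca{L}^{R,l,\mathrm{fake}}_\zeta$ whenever the transform at $\zeta$ of $I$ lies on the corresponding untwisted fake cone.

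The step I expect to be the main obstacle is the bookkeeping at the primitive $m$-th roots of unity with $m>1$. There the Kawasaki stratum parametrizes curves with an order-$m$ automorphism, $f^*\ca{R}$ decomposes into $\bb{Z}_m$-eigensheaves, and the restriction of $\ca{D}^{R,l}$ must be separated into (a) a purely cohomological twist on the stratum, absorbed into the definition of $\ca{L}^{R,l,\mathrm{fake}}_\zeta$, and (b) a monomial in $q$ and the $L_i$ which, after the substitution $q\mapsto\zeta q^{1/m}$ and the Adams operation $\Psi^m$ appearing in the adelic transform, reassembles into $\prod_i\big(L_i^{-\beta_i}q^{(\beta_i+1)\beta_i/2}\big)^l$ evaluated on the stratum's curve class. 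Getting the powers of $\zeta$ and the fractional curve degrees to cancel correctly --- using the action of $\Psi^m$ on line bundles and the congruence behaviour of $(\beta_i+1)\beta_i/2$ modulo $m$ --- is the delicate point; once it is in hand, Theorem~\ref{detmodify} follows by assembling conditions (i) and (ii) for $I^{R,l}$ from those for $I$ via Theorem~\ref{levelladele}.
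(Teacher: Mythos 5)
Your overall strategy (verify the adelic conditions of Theorem \ref{levelladele} for $I^{R,l}$, given that $I$ satisfies them at level $0$) is the same as the paper's, and your treatment of condition (1) is fine. The gap is in the central step: you never explain why multiplying the degree-$\beta$ coefficient by the $\beta$-dependent monomial $\prod_i\big(L_i^{-\beta_i}q^{(\beta_i+1)\beta_i/2}\big)^l$ carries $\ca{L}_{fake}$ to $\ca{L}^{R,l}_{fake}$. The quantum Riemann--Roch analysis of the $\ca{D}^{R,l}$-twist gives $\ca{L}^{R,l}_{fake}=\exp\!\big(-l\,\mathrm{ch}_2(\ca{R})/z\big)\ca{L}_{fake}$, i.e.\ a single pointwise, \emph{degree-independent} loop-group rotation; your claim that ``stratum by stratum, on the degree-$\beta$ part it amounts to multiplication by the monomial'' conflates these two different operations. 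The bridge, which is the actual content of the proof, is the divisor-type operator
\[
\Phi=\prod_{i}\exp\bigg(l\bigg(\frac{\big(f_i(p_j-zQ_j\partial_{Q_j})\big)^2}{2z}+\frac{f_i(p_j-zQ_j\partial_{Q_j})}{2}\bigg)\bigg),
\]
which preserves $\ca{L}_{fake}$ by the string/divisor-flow invariance of the cone (the Lemma in the proof of Theorem 2 of Coates--Givental) and whose action on $Q^\beta$ produces exactly $\exp\!\big(l(\mathrm{ch}_2\ca{R}/z+\mathrm{ch}_1\ca{R}/2)\big)$ times the determinantal monomial. Without identifying such a cone-preserving operator, the $\eta=1$ condition is not established. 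The $\bb{P}^1$ ``model calculation'' you invoke computes the restriction of $\ca{D}^{R,l}$ to the distinguished fixed locus of the graph space --- that is the derivation of the toric $I$-function (a single point of the cone obtained by localization), and it says nothing about the effect of the modification on an \emph{arbitrary} point $I$ of $\ca{L}_{S_\infty}$, which is what the theorem asserts.

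Second, for the conditions at primitive $m$-th roots of unity $\eta\neq 1$ you explicitly defer ``the delicate point.'' This is not minor bookkeeping: one must show that the transformed tangent space $\ca{T}^{R,l}_m(\widetilde{I}^{R,l}_{(1)})$ equals $D_m\Phi^m\big(\ca{T}_m(I_{(1)})\big)$ with $\Phi^m=\Psi^m(\Phi)$, track the substitution $q\mapsto q^{1/m}/\eta$ through $\Phi$ (which produces an auxiliary operator $\ca{D}_\eta$), and prove that $\ca{D}_\eta$ preserves $\ca{T}_m(I_{(1)})$ --- the last step again resting on the divisor-flow invariance of tangent spaces to the cone. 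Since this is precisely where the level structure interacts nontrivially with the Adams operations and the substitution at $\eta$, deferring it leaves the proof incomplete at its most substantive point.
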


In Theorem \ref{ifunc}, we give explicit formulas for level-$l$ (torus-equivariant) $I$-functions of toric varieties. Moreover, we prove the following toric mirror theorem.
\begin{theorem}\label{weakmirror}
Assume that $X$ is a smooth quasi-projective toric variety. The level-$l$ torus-equivariant $I$-function $(1-q)I^{R,l}$ of $X$ lies on the cone $\ca{L}_{S_\infty}^{R,l}$ in the permutation- and torus-equivariant quantum K-theory of level $l$ of $X$.
\end{theorem}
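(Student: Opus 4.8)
\emph{Sketch of proof.} The plan is to deduce the theorem from the level-$0$ toric mirror theorem of Givental together with the determinantal modification result, Theorem~\ref{detmodify}. Throughout I would work torus-equivariantly, so that all invariants and the cone $\ca{L}_{S_\infty}^{R,l}$ are defined for the possibly non-compact toric variety $X$ by localization; this uses the torus-equivariant version of Theorem~\ref{detmodify}, which holds by the same virtual Kawasaki--Riemann--Roch argument performed in equivariant $K$-theory.

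First I would invoke Givental's permutation-equivariant toric mirror theorem in ordinary quantum $K$-theory: the standard $q$-hypergeometric toric $I$-function
\[
I(q,Q)=\sum_{\beta\in\mathrm{Eff}(X)}Q^\beta\prod_\rho\frac{1}{\prod_{m=1}^{\langle D_\rho,\,\beta\rangle}\big(1-P_\rho\,q^m\big)}
\]
(with the usual interpretation when $\langle D_\rho,\beta\rangle<0$) satisfies $(1-q)\,I\in\ca{L}_{S_\infty}$. Writing $(1-q)I=\sum_\beta I_\beta Q^\beta$, Theorem~\ref{detmodify} applies verbatim and shows that
\[
\sum_{\beta\in\mathrm{Eff}(X)}I_\beta\,Q^\beta\prod_i\big(L_i^{-\beta_i}\,q^{(\beta_i+1)\beta_i/2}\big)^l\in\ca{L}_{S_\infty}^{R,l},
\]
where $L_i$ are the $K$-theoretic Chern roots of $\ca{R}$ and $\beta_i=\int_\beta c_1(L_i)$. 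Since the factor $1-q$ is independent of $\beta$, it may be pulled out, so the point above equals $1-q$ times the series obtained from $I(q,Q)$ by inserting the monomial $\prod_i\big(L_i^{-\beta_i}q^{(\beta_i+1)\beta_i/2}\big)^l$ into each $Q^\beta$-term.

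Next I would identify that series with the explicit level-$l$ $I$-function $I^{R,l}$ of Theorem~\ref{ifunc}. In the toric case $G=(\C^*)^r$ is a torus and $R=\bigoplus_i\chi_i$ splits into characters, hence $\ca{R}=\bigoplus_i L_i$ with each $L_i$ a monomial line bundle in the $P_\rho$; thus $\beta_i=\sum_\rho a_{i\rho}\langle D_\rho,\beta\rangle$ for suitable integers $a_{i\rho}$, and $\sum_{m=1}^{\beta_i}m=\beta_i(\beta_i+1)/2$. Comparing this with the extra numerator recorded in Theorem~\ref{ifunc} --- which, by construction, encodes $\big(\det R\pi_*(\mathfrak{P}\times_G R)\big)^{-l}$ on the $\C^*$-fixed loci of the quasimap graph space that enter the localization formula for $I$, where the degree-$\beta$ quasimap has associated line bundles $\O(\beta_i)$ with $\C^*$-weights $0,1,\dots,\beta_i$, so the relevant determinant of cohomology is (up to the normalization conventions) precisely the monomial $\prod_i\big(L_i^{-\beta_i}q^{(\beta_i+1)\beta_i/2}\big)^l$ --- shows that the two expressions agree term by term. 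This yields Theorem~\ref{weakmirror}.

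I expect the last identification to be the main obstacle: reconciling the explicit toric formula of Theorem~\ref{ifunc}, obtained by graph-space localization with the level-$l$ determinant line bundle inserted, with the abstract determinantal modification of Theorem~\ref{detmodify}, obtained from Kawasaki--Riemann--Roch on moduli of stable maps. Care will be needed with the normalization conventions (the inverse in $\det_R$, the off-by-one in $h^0(\P^1,\O(\beta_i))=\beta_i+1$, and the choice between $q$ and $q^{-1}$), with the passage between the quasimap and stable-map descriptions of the cone (handled either via quasimap wall-crossing or by observing that both give the same point of $\ca{L}_{S_\infty}^{R,l}$), and with the equivariant weight bookkeeping forced by the non-compactness of $X$. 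Once the conventions are pinned down, what remains is a direct, if lengthy, $q$-Pochhammer computation. Alternatively, one could bypass Theorem~\ref{detmodify} and argue directly from the adelic characterization of Theorem~\ref{levelladele} by expanding $(1-q)I^{R,l}$ at each primitive root of unity and checking membership in the corresponding twisted fake cone; this route, however, essentially reproves Theorem~\ref{detmodify} in the toric case.
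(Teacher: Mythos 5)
Your overall strategy --- level-$0$ toric mirror theorem plus determinantal modification plus term-by-term identification with the explicit $I$-function of Theorem \ref{ifunc} --- is the right idea, and the final identification you flag as ``the main obstacle'' is in fact immediate: in the toric setting $R$ is a character (or sum of characters) of $G=(\bb{C}^*)^s$, so $\ca{R}$ is a line bundle with single Chern root $L_1=\ca{R}$ and $\beta_1=\beta_R$, and the modification factor $\big(L_1^{-\beta_1}q^{(\beta_1+1)\beta_1/2}\big)^l=\ca{R}^{-l\beta_R}q^{l\beta_R(\beta_R+1)/2}$ is literally the extra factor appearing in Proposition \ref{ifunc}. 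The genuine gap is elsewhere: you apply Theorem \ref{detmodify} to the torus-equivariant cone of a \emph{quasi-projective, possibly non-compact} $X$, waving at ``the same virtual Kawasaki--Riemann--Roch argument performed in equivariant $K$-theory.'' But Theorem \ref{detmodify} is proved via the adelic characterization (Theorem \ref{levelladele}), which rests on Tonita's virtual Kawasaki formula applied to \emph{proper} moduli of stable maps to a smooth \emph{projective} variety. For non-compact $X$ the invariants are only defined by equivariant residues over fixed loci, and neither the adelic characterization nor the determinantal-modification theorem is established in that setting in the paper; asserting it ``holds by the same argument'' is exactly the step that needs proof and is not routine.

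The paper's proof is structured precisely to avoid this. It uses the fixed-point characterization of the equivariant cone: a point $f=\sum_\alpha f^{(\alpha)}\phi_\alpha$ lies on $\ca{L}_{S_\infty}^{R,l,eq}$ if and only if (1) each restriction $f^{(\alpha)}$ to a torus-fixed point lies on the cone $\ca{L}^{pt}_{S_\infty}$ of the \emph{point} target, and (2) the poles at $q=U_\rho(\alpha)^{-1/m}$ are simple with residues satisfying an explicit recursion whose coefficient $C_{\alpha\rho}(m)$ acquires the extra factor $\big(\ca{R}(\alpha)^{-md^R_{\alpha\rho}}q^{md^R_{\alpha\rho}(md^R_{\alpha\rho}+1)/2}\big)^l$. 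Theorem \ref{detmodify} is then invoked only for the point target (where it is legitimately available, and where $\ca{L}^{pt,R,l}_{S_\infty}=\ca{L}^{pt}_{S_\infty}$ since the determinant bundle is topologically trivial) to verify (1), while (2) is checked by a direct manipulation of the determinant factor. So if you want to keep your global route, you must first extend Theorem \ref{detmodify} to the equivariant theory of non-compact targets; otherwise you should localize first, as the paper does, and then also supply the residue-recursion verification, which your sketch omits entirely.
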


In the study of toric mirror theorems for quantum $K$-theory with level structure, a remarkable phenomenon is the appearance of Ramanujan's mock theta
functions. We first establish some notations. Denote the standard representation of $G$ by St and its dual by $\text{St}^\vee$. When $G=\bb{C}^*$, any $n$-dimensional representation of $G$ is determined by a \emph{charge vector} $(a_1,\dots,a_n)$ with $a_i\in\bb{Z}$: a $\bb{C}^*$-action on $\bb{C}^n$ can be explicitly described by
\[
\lambda\cdot(x_1,\dots,x_n)\rightarrow(\lambda^{a_1}x_1,\dots,\lambda^{a_n}x_n),\quad\text{where}\ \lambda\in\bb{C}^*
.\]
In the following propositions, we consider GIT quotients $\bb{C}^n\sslash\bb{C}^*$, and we refer to the $\bb{C}^*$-actions by their associated charge vectors.

\begin{proposition}\label{prop1}
Consider $X=\bb{C}\sslash\bb{C}^*=[({\bb{C}}\backslash0)/{\bb{C}}^*]$, where the $\bb{C}^*$-action is the standard action by multiplication. The $\bb{C}^*$-equivariant $K$-ring $K_{\bb{C}^*}(X)$ is isomorphic to the representation ring $\emph{Repr}(\bb{C}^*)$. Let $\lambda\in K_{\bb{C}^*}(X)$ be the equivariant parameter corresponding to the standard representation. For the $\bb{C}^*$-representations $\emph{St}$ and $\emph{St}^\vee$, we have the following explicit formulas of the equivariant small $I$-functions
\begin{align*}
I^{\emph{St},\,l}_{X}(q,Q)&=1+\sum_{n\geq 1}\frac{q^{\frac{n(n-1)l}{2}}}{(1-\lambda^{-1} q)(1-\lambda^{-1} q^2)\cdots (1-\lambda^{-1} q^n)}Q^n,\\
I^{\emph{St}^\vee,\,l}_{X}(q,Q)&=1+\sum_{n\geq 1}\frac{q^{\frac{n(n+1)l}{2}}}{(1-\lambda^{-1} q)(1-\lambda^{-1} q^2)\cdots (1-\lambda^{-1} q^n)}Q^n,\\
\end{align*}
By chosing certain specializations of the parameters, we obtain Ramanujan's mock theta functions of order 3
$$I^{\emph{St},\,l=1}_X(q^2,Q)|_{\lambda=-1, Q=q}=1+\sum_{n\geq 1}\frac{q^{n^2}}{(1+q^2)(1+q^4)\cdots (1+q^{2n})},$$
$$I^{\emph{St},\,l=1}_X(q^2,Q)|_{\lambda=q, Q=q}=1+\sum_{n\geq 1}\frac{q^{n^2}}{(1-q)(1-q^3)\cdots (1-q^{2n-1})},$$
$$I^{\emph{St},\,l=1}_X(q^2,Q)|_{\lambda=-q, Q=1}=1+\sum_{n\geq 1}\frac{q^{n(n-1)}}{(1+q)(1+q^3)\cdots (1+q^{2n-1})},$$
and Ramanujan's mock theta functions of order 5
$$I^{\emph{St},\,l=2}_X(q,Q)|_{\lambda=-1, Q=q}=1+\sum_{n\geq 1}\frac{q^{n^2}}{(1+q)(1+q^2)\cdots (1+q^n)},$$
$$I^{\emph{St},\,l=2}_X(q^2,Q)|_{ \lambda=q, Q=q^2}=1+\sum_{n\geq 1}\frac{q^{2n^2}}{(1-q)(1-q^3)\cdots (1-q^{2n-1})},$$
$$I^{\emph{St}^\vee,\,l=2}_X(q,Q)|_{\lambda=-1, Q=1}=1+\sum_{n\geq 1}\frac{q^{n(n+1)}}{(1+q)(1+q^2)\cdots (1+q^{n})},$$
$$I^{\emph{St}^\vee,\,l=2}_X(q^2,Q)|_{\lambda=q, Q=1}=1+\sum_{n\geq 1}\frac{q^{2n^2+2n}}{(1-q)(1-q^3)\cdots (1-q^{2n-1})}.$$
\end{proposition}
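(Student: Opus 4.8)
The plan is to read off both $I$-function formulas as the specialization of the general toric formula of Theorem~\ref{ifunc} to $X=\bb{C}\sslash\bb{C}^*$ with charge vector $(1)$, and then to match the listed substitutions against Ramanujan's tables of mock theta functions.

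I would start from the level-$0$ equivariant small $I$-function of $X=\bb{C}\sslash\bb{C}^*$. This is the standard toric quasimap series: the degree-$n$ contribution is the equivariant holomorphic Euler characteristic of the relevant class on the degree-$n$ quasimap graph space, giving the reciprocal $q$-Pochhammer $1/\prod_{k=1}^{n}(1-\lambda^{-1}q^k)$, where $\lambda$ is the equivariant parameter of the standard representation. (For $\bb{P}^{N-1}=\bb{C}^N\sslash\bb{C}^*$ one has $\sum_n Q^n/\prod_{k=1}^n(1-Pq^k)^N$; putting $N=1$ and retaining the big-torus equivariance replaces $P$ by $\lambda^{-1}$.) Thus $I^{\ca{R},0}_X=\sum_{n\ge 0}Q^n/\prod_{k=1}^{n}(1-\lambda^{-1}q^k)$ for either choice of $\ca{R}$, and by the $l=0$ case of Theorem~\ref{weakmirror} the point $(1-q)I^{\ca{R},0}_X$ lies on $\ca{L}_{S_\infty}$.

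Next I would apply the level-$l$ modification. Either directly from Theorem~\ref{ifunc}, or via Theorem~\ref{detmodify}, the level-$l$ twist multiplies the degree-$n$ term by the value at the torus-fixed degree-$n$ quasimap of $(\det R\pi_*\mathrm{ev}^*\ca{R})^{-l}$, which by Theorem~\ref{detmodify} equals $\bigl(L_1^{-\beta_1}q^{(\beta_1+1)\beta_1/2}\bigr)^l$ with $\beta_1=\int_\beta c_1(L_1)$. For $\ca{R}$ associated with $\mathrm{St}$ the bundle $\mathrm{ev}^*\ca{R}$ has degree $-n$ on the source $\bb{P}^1$ (so $\beta_1=-n$), yielding the $q$-exponent $(-n+1)(-n)/2=n(n-1)/2$; for $\mathrm{St}^\vee$ it has degree $+n$, yielding $(n+1)n/2$; the residual equivariant weight $L_1^{-\beta_1}$ is a power of $\lambda$ that is absorbed into the normalization of the Novikov variable $Q$. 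Inserting these factors into the level-$0$ series produces exactly the two displayed formulas. The main obstacle is this step: one must fix orientation and linearization conventions carefully so that the determinant-of-cohomology factor comes out with the stated exponents and with the $\mathrm{St}$/$\mathrm{St}^\vee$ asymmetry $n(n-1)$ versus $n(n+1)$, and check that beyond the $\lambda$-power absorbed into $Q$ no further equivariant contribution survives; everything else is bookkeeping.

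Finally, for the mock theta specializations I would simply substitute the indicated values of $l$, $\lambda$, $Q$ (and rescale $q\mapsto q^2$ where indicated), absorb the constant term $1$ into the $n=0$ summand, and rewrite each denominator as a $q$-Pochhammer symbol $(-q^a;q^b)_n$ or $(q^a;q^b)_n$. Matching with the classical lists then identifies the three $l=1$ series with Ramanujan's order-$3$ mock theta functions (namely $\phi(q)$, $1+\psi(q)$, and $1+\nu(q)$) and the four $l=2$ series with the order-$5$ mock theta functions $f_0(q)$, $F_0(q)$, $f_1(q)$, $F_1(q)$. Each identification is a one-line elementary $q$-series manipulation, so once Theorem~\ref{ifunc} is in hand the Proposition follows.
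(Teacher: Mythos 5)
Your proposal is correct and follows essentially the same route as the paper: the paper's proof of Proposition \ref{prop1} is a one-line appeal to the general toric formula of Theorem \ref{ifunc} (whose own derivation is exactly the computation of $\ca{D}^{R,l}$ on the distinguished fixed locus that you carry out, with $\beta_R=-n$ for $\mathrm{St}$ and $+n$ for $\mathrm{St}^\vee$ giving the $n(n-1)/2$ versus $n(n+1)/2$ exponents), followed by the elementary $q$-series substitutions. Your handling of the residual equivariant weight of $\ca{R}$ by absorbing it into $Q$ matches the paper's implicit convention (the analogous factor $p^{nl}$ is kept explicit in Propositions \ref{prop2}--\ref{prop3} and set to $1$ there).
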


\begin{proposition}\label{prop2}
Let $a_1$ and $a_2$ be two positive integers which are coprime. We consider the target $X_{a_1,a_2}=[({\bb{C}}^2\backslash0)/{\bb{C}}^*]$ with charge vector $(a_1,a_2)$ and a line bundle $p=[\{({\bb{C}}^2\backslash0)\times\bb{C}\}/{\bb{C}}^*]$ with charge vector $(a_1,a_2,1)$. Let $\lambda_1$ and $\lambda_2$ be the equivariant parameters. For the $\bb{C}^*$-representations $\emph{St}$ and $\emph{St}^\vee$, we have the following explicit formulas of the equivariant small $I$-functions
\begin{align*}
I^{\emph{St},\,l}_{X_{a_1,a_2}}(q,Q)&=1+\sum_{n\geq 1}\frac{ p^{nl}q^{\frac{n(n-1)l}{2}}}{(1-p^{a_1}\lambda_1^{-1} q)\cdots (1-p^{a_1}\lambda_1^{-1} q^{a_1n})(1-p^{a_2}\lambda_2^{-1} q)\cdots (1-p^{a_2}\lambda_2^{-1} q^{a_2n})}Q^n,\\
I^{\emph{St}^\vee,\,l}_{X_{a_1,a_2}}(q,Q)&=1+\sum_{n\geq 1}\frac{ p^{nl}q^{\frac{n(n+1)l}{2}}}{(1-p^{a_1}\lambda_1^{-1} q)\cdots (1-p^{a_1}\lambda_1^{-1} q^{a_1n})(1-p^{a_2}\lambda_2^{-1} q)\cdots (1-p^{a_2}\lambda_2^{-1} q^{a_2n})}Q^n.
\end{align*}
By choosing $(a_1,a_2)=(1,1)$ and certain specializations of the parameters, we obtain the following four Ramanujan's mock theta functions of order 3:
$$I^{\emph{St},\,l=2}_{X_{1,1}}(q^2,Q)|_{p=1,\lambda_1=\lambda_2=-1, Q=q}=1+\sum_{n\geq 1}\frac{q^{n^2}}{((1+q)(1+q^2)\cdots (1+q^n))^2},$$
$$I^{\emph{St},\,l=2}_{X_{1,1}}(q,Q)|_{p=1, \lambda_1=\frac{1+\sqrt{3}i}{2}, \lambda_2=\frac{1-\sqrt{3}i}{2}, Q=q}=1+\sum_{n\geq 1}\frac{q^{n^2}}{(1-q+q^2)(1-q^2+q^4)\cdots (1-q^n+q^{2n})},$$
$$\frac{1}{(1-q)^2}I^{\emph{St}^\vee,\,l=2}_{X_{1,1}}(q^2,Q)|_{p=1, \lambda_1=\lambda_2=q^{-1}, Q=1}=\sum_{n\geq 0}\frac{q^{2n^2+2n}}{((1-q)(1-q^3)\cdots (1-q^{2n+1}))^2},$$
\begin{align*}\frac{1}{(1+q+q^2)}I^{\emph{St}^\vee,\,l=2}_{X_{1,1}}(q^2,Q)&|_{p=1, \lambda_1=\frac{-1+\sqrt{3}i}{2}q^{-1}, \lambda_2=\frac{-1-\sqrt{3}i}{2}q^{-1}, Q=1}\\
&=\sum_{n\geq 0}\frac{q^{2n^2+2n}}{(1+q+q^2)(1+q^3+q^6)\cdots (1+q^{2n+1}+q^{4n+2})}.
\end{align*}
\end{proposition}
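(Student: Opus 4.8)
The plan is to read off the two $I$-function formulas as the special case $X=X_{a_1,a_2}$ of Theorem~\ref{ifunc} (equivalently, by applying the determinantal modification of Theorem~\ref{detmodify} to the known level-$0$ small $I$-function), and then to verify the mock theta identities by an elementary manipulation of $q$-products. Granting Theorem~\ref{ifunc}, the argument is essentially mechanical; the only technical subtlety is the sign/normalization bookkeeping in the determinant twist, while the real substance of the statement is the second step --- the observation that these particular one-parameter degenerations of the level-$l$ $I$-function of $\mathbb{P}^1$ reproduce named mock theta functions.

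\emph{The $I$-function.} The target $X_{a_1,a_2}=\mathbb{C}^2\sslash\mathbb{C}^*$ is a rank-one toric target with two homogeneous coordinates of $\mathbb{C}^*$-weights $a_1,a_2$, effective curve classes indexed by $n\in\mathbb{Z}_{\geq0}$, and $p$ the line bundle of charge vector $(a_1,a_2,1)$ --- which is precisely the bundle $\mathcal{R}$ attached to $R=\mathrm{St}$ (and $p^{-1}$ for $R=\mathrm{St}^\vee$). Its level-$0$ small $I$-function is
\[
I^{l=0}_{X_{a_1,a_2}}(q,Q)=1+\sum_{n\geq1}\frac{Q^n}{\prod_{k=1}^{a_1n}(1-p^{a_1}\lambda_1^{-1}q^k)\prod_{k=1}^{a_2n}(1-p^{a_2}\lambda_2^{-1}q^k)},
\]
whose denominator is intrinsic to the target and independent of $R$. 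By Theorem~\ref{ifunc} (equivalently, by Theorem~\ref{detmodify}, since the above lies on $\mathcal{L}_{S_\infty}$ after rescaling by $1-q$), the level-$l$ $I$-function is obtained from this by multiplying the degree-$n$ term by the level factor $\bigl(L_1^{-\beta_1}q^{(\beta_1+1)\beta_1/2}\bigr)^l$, where $L_1$ is the $K$-theoretic Chern root of $\mathcal{R}$ and $\beta_1=\int_n c_1(\mathcal{R})$; evaluating this for $\mathcal{R}=p$, resp.\ $\mathcal{R}=p^{-1}$, and the degree-$n$ class gives the prefactor $p^{nl}q^{n(n-1)l/2}$, resp.\ $p^{nl}q^{n(n+1)l/2}$, and hence the two asserted formulas. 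The one place requiring care is the bookkeeping relating the GIT character, the bundle $\mathcal{R}$, the inverse in $\mathrm{det}_R$, and the normalization of the curve class $n$.

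\emph{The mock theta identities.} For the second half I would substitute the stated values of $l$, $p$, $\lambda_1$, $\lambda_2$, $Q$ (and the global rescaling $q\mapsto q^2$ where it appears) into the $(a_1,a_2)=(1,1)$ formula and simplify, using only: (i) for $\lambda_1=\lambda_2$ the denominator becomes a perfect square $\bigl(\prod_{k=1}^n(1-p\lambda^{-1}q^k)\bigr)^2$, so $p=1$, $\lambda=-1$, $Q=q$ produces $\sum_{n\geq0} q^{n^2}/(-q;q)_n^2$; (ii) $(1-\zeta q^k)(1-\bar\zeta q^k)=1-2(\operatorname{Re}\zeta)q^k+q^{2k}$ for $|\zeta|=1$, so $\lambda_1,\lambda_2=e^{\pm i\pi/3}$ yields the factors $1-q^k+q^{2k}$ and $\lambda_1,\lambda_2=e^{\pm 2i\pi/3}q^{-1}$ yields $1+q^{2k+1}+q^{4k+2}$; (iii) $\lambda_1=\lambda_2=q^{-1}$ turns the $k$-th factor into $1-q^{2k+1}$. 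In cases (ii)--(iii) the $q\mapsto q^2$ rescaling together with division by $1+q+q^2$, resp.\ $(1-q)^2$, reinstates the $n=0$ term and produces $\prod_{j=0}^n(1+q^{2j+1}+q^{4j+2})$, resp.\ $(q;q^2)_{n+1}^2$, in the denominator. One then recognizes the four resulting $q$-series as the order-$3$ mock theta functions $f$, $\chi$, $\omega$, $\rho$ in the classical notation of Ramanujan and Watson, which completes the proof.
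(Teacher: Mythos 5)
Your proposal is correct and follows essentially the same route as the paper: the paper's own proof of this proposition is simply to specialize the graph-space localization formula of Theorem \ref{ifunc} to the rank-one target $X_{a_1,a_2}$ (invoking Remarks \ref{firstorb}, \ref{orbmap1}, \ref{orbmap2} to restrict to the untwisted sector when $a_1,a_2>1$), with the mock theta specializations left as the direct substitutions you carry out. Your elementary verification of the four order-3 identities (and their identification as $f$, $\chi$, $\omega$, $\rho$) is accurate, and in fact your computation of the first one shows that the substitution there should read $I^{\mathrm{St},\,l=2}_{X_{1,1}}(q,Q)$ rather than $I^{\mathrm{St},\,l=2}_{X_{1,1}}(q^2,Q)$ to produce $\sum_n q^{n^2}/\prod_{k=1}^n(1+q^k)^2$.
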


\begin{proposition}\label{prop3}
Let $a$ and $b$ be two positive integers which are coprime. We consider the target $X_{a,-b}=[\{({\bb{C}}\backslash0)\times {\bb{C}}\}/{\bb{C}}^*]$ with charge vector $(a,-b)$ and a line bundle $p= [\{({\bb{C}}\backslash0)\times {\bb{C}}\times\bb{C}\}/{\bb{C}}^*]$ with charge vector $(a,-b,1)$. Let $\lambda$ and $\mu$ be the equivariant parameters of the standard $(\bb{C}^*)^2$-action on $X_{a,-b}$. For the $\bb{C}^*$-representation $\emph{St}$, we have the following explicit formula for the equivariant small $I$-function
\[I^{\emph{St},\, l}_{X_{a,-b}}(q)=1+\sum_{n\geq 1}(-1)^{bn}\frac{ p^{nl-b^2n}q^{\frac{n(n-1)l-bn(bn-1)}{2}}\mu^{-bn}(1-p^{b}\mu)(1-p^b\mu  q)\cdots (1-p^b\mu  q^{bn-1})}{(1-p^a\lambda^{-1}  q)(1-p^a\lambda^{-1} q^2)\cdots (1-p^a\lambda^{-1}  q^{an})}Q^n.
\]
In particular, we have order 7 mock theta functions
$$I^{\emph{St},\,l=3}_{X_{2,-1}}(q,Q)|_{p=1, \lambda=1,\mu=q, Q=-q^2}=1+\sum_{n\geq 1}\frac{q^{n^2}}{(1-q^{n+1})\cdots (1-q^{2n})},$$
$$\frac{q}{1-q}I^{\emph{St},\,l=3}_{X_{2,-1}}(q,Q)|_{p=1, \lambda=q^{-1},\mu=q, Q=-q^4}=\sum_{n\geq 1}\frac{q^{n^2}}{(1-q^{n})\cdots (1-q^{2n-1})},$$
$$\frac{1}{1-q}I^{\emph{St},\,l=3}_{X_{2,-1}}(q,Q)|_{p=1, \lambda=q^{-1},\mu=q, Q=-q^3}=\sum_{n\geq 1}\frac{q^{n^2-n}}{(1-q^{n})\cdots (1-q^{2n-1})}.$$
\end{proposition}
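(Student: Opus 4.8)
The formula for $I^{\text{St},l}_{X_{a,-b}}$ is a direct specialization of the general toric $I$-function of Theorem \ref{ifunc}, so the plan is to set up the toric data, read off the summand for each curve class, and then verify the three mock theta identities by substitution. I would begin from the GIT presentation $X_{a,-b}=Z\sslash\bb{C}^{*}$ with $Z=(\bb{C}\backslash0)\times\bb{C}$ and $\bb{C}^{*}$ acting with weights $(a,-b)$; since $a>0$ the semistable locus is $\{x_{1}\neq0\}$, and since $\gcd(a,b)=1$ the quotient is a smooth toric Deligne--Mumford stack, so the hypotheses of Theorem \ref{ifunc} are met. The relevant data are: effective curve classes indexed by $n\in\bb{Z}_{\geq0}$ with Novikov variable $Q^{n}$; the line bundle $\ca R=(Z\times\text{St})\sslash\bb{C}^{*}$, which is exactly $p$ (charge vector $(a,-b,1)$); and the two line bundles attached to the homogeneous coordinates, $p^{a}\lambda^{-1}$ (the charge-$a$ coordinate, equivariant parameter $\lambda$) and $p^{-b}\mu^{-1}$ (the charge-$(-b)$ coordinate, equivariant parameter $\mu$).

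Plugging this into Theorem \ref{ifunc}, the charge-$a$ coordinate contributes the denominator $\prod_{k=1}^{an}(1-p^{a}\lambda^{-1}q^{k})$, the charge-$(-b)$ coordinate — of negative degree $-bn$ on the class $n$ — contributes the numerator product $\prod_{k=0}^{bn-1}(1-p^{-b}\mu^{-1}q^{-k})$, and the level-$l$ determinant twist contributes the factor $(p^{n}q^{n(n-1)/2})^{l}$; equivalently, one may start from the $l=0$ toric $I$-function and apply the determinantal modification of Theorem \ref{detmodify} with $\ca R=p$ (a single $K$-theoretic Chern root). Pulling the monomial $-p^{-b}\mu^{-1}q^{-k}$ out of each factor of the numerator product rewrites it as $(-1)^{bn}p^{-b^{2}n}\mu^{-bn}q^{-bn(bn-1)/2}\prod_{k=0}^{bn-1}(1-p^{b}\mu q^{k})$; combining this with the level factor produces exactly the sign $(-1)^{bn}$, the monomial $p^{\,nl-b^{2}n}\mu^{-bn}$, and the $q$-exponent $\tfrac12(n(n-1)l-bn(bn-1))$, which is the claimed formula. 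By Theorem \ref{weakmirror}, $(1-q)I^{\text{St},l}_{X_{a,-b}}$ then lies on $\ca L^{\text{St},l}_{S_{\infty}}$.

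For the three mock theta identities I would substitute each prescribed specialization into the formula just obtained and simplify. For instance, with $(a,b)=(2,1)$, $l=3$, $p=1$, $\lambda=1$, $\mu=q$, $Q=-q^{2}$, the $n$-th term collapses to $(-1)^{n}q^{n(n-1)}q^{-n}\cdot\frac{(1-q)\cdots(1-q^{n})}{(1-q)\cdots(1-q^{2n})}\cdot(-q^{2})^{n}=\frac{q^{n^{2}}}{(1-q^{n+1})\cdots(1-q^{2n})}$, and the other two cases are the same kind of elementary simplification — the last one after the index shift $n\mapsto n+1$ together with the prefactor $\tfrac1{1-q}$. One then recognizes the resulting three $q$-series as Ramanujan's mock theta functions of order $7$ (from the lost notebook), which completes the proof.

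The delicate part is bookkeeping rather than anything conceptual: orienting all the duality and sign conventions correctly — charges versus their negatives, $\lambda$ versus $\lambda^{-1}$, the inverse built into the definition of $\ca D^{R,l}$, and the exact shape of the negative-degree factor in the toric $I$-function of Theorem \ref{ifunc} — and checking that coprimality of $a$ and $b$ is precisely what guarantees the smoothness hypothesis there. Everything past that, both the passage from Theorem \ref{ifunc} to the displayed formula and the passage from the formula to the mock theta functions, is routine algebra.
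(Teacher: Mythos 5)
Your proposal is correct and follows essentially the same route as the paper: specialize the toric $I$-function of Theorem \ref{ifunc} to the charge vector $(a,-b)$, factor the monomial $-p^{-b}\mu^{-1}q^{-k}$ out of each negative-degree numerator factor to produce the sign $(-1)^{bn}$ and the stated exponents, and then obtain the order-$7$ mock theta functions by direct substitution (the paper's proof in fact stops after the first step and leaves the specializations implicit, which you carry out correctly, including the index shift $n\mapsto n+1$ needed in the last two cases).
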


\begin{remark}\label{firstorb}
The targets that we consider in Proposition \ref{prop2} and Proposition \ref{prop3} are in general orbifolds. The full $I$-functions have components corresponding to the twisted sectors of the (rigidified) inertia stacks of the orbifold targets. However, to match with mock theta functions, we only consider the components of $I$-functions corresponding to the untwisted sectors. See the discussion on the orbifold case in Remark \ref{orbmap1} and Remark \ref{orbmap2}. \end{remark}

It is interesting that we can recover Ramanujan's mock theta functions using only very simple targets. 

One of the attractive features of quantum $K$-theory is the appearance of $q$-hypergeometric series as mirrors of $K$-theoretic $J$-functions. Recall the definition of the $q$-Pochhammer symbol
$$(a;q)_n:=(1-a)(1-aq)\cdots (1-aq^{n-1})\quad\text{for}\ n>0,$$
and $(a;q)_0:=1$. A general $q$-hypergeometric series can be written as
$$_r \phi_s=\sum_{n\geq 0}\frac{(\alpha_1; q)_n\cdots (\alpha_r; q)_n}{(\beta_1; q)_n\cdots (\beta_s; q)_n}\frac{z^n}{(q; q)_n}[(-1)^n q^\frac{n(n-1)}{2}]^{1+s-r}.$$
For the quantum $K$-theory of level 0, i.e, Givental-Lee's quantum $K$-theory, we only see special $q$-hypergeometric series of the form
$$\sum_{n\geq 0}\frac{(\alpha_1; q)_n\cdots (\alpha_r; q)_n}{(\beta_1; q)_n\cdots (\beta_s; q)_n}\frac{z^n}{(q; q)_n}.$$
The level structure naturally introduces the term 
$$[(-1)^n q^\frac{n(n-1)}{2}]^{1+s-r}.$$

\begin{proposition}\label{prop4}
Consider the target $X_{\bf 1, -\bf 1}:=O(-1)^{\oplus r}_{\bb{P}^{s-1}}=[\{({\bb{C}}^{s}\backslash0)\times {\bb{C}}^r\}/{\bb{C}}^*]$
with the charge vector $(1,1,\cdots, 1, -1, -1, \cdots, -1)$. Let $p=[\{({\bb{C}}^{s}\backslash0)\times {\bb{C}}^r\times\bb{C}\}/{\bb{C}}^*]$ be a line bundle with charge vector $(1,\cdots, 1, -1,, \cdots, -1,1)$. Let $\lambda_1, \cdots, \lambda_s, \mu_1, \cdots, \mu_r$ be the equivariant parameters of the standard $({\bb{C}}^*)^{s+r}$-action on $X_{\bf 1, -1}$. Then the equivariant small
$I$-function has the following explicit form
$$I^{\emph{St},\,l=1+s}_{X_{\bf 1, -\bf 1}}(q)=1+\sum_{n\geq1}(-1)^{nr}\prod_{i=1}^r(p\mu_i)^{-n}p^{(1+s)n}\frac{(p\mu_1, q)_{n}\cdots (p\mu_r; q)_{n}}
{(p\lambda_1^{-1}q; q)_{n}\cdots (p\lambda_s^{-1}q; q)_n}Q^n (q^{\frac{n(n-1)}{2}})^{1+s-r}.$$
Hence we can recover the general $q$-hypergeometric series by setting $p=1,\lambda_i^{-1}q=\beta_i,\mu_j=\alpha_j, Q= (-1)^{1+s}z\prod_{i=1}^r\mu_i$.
\end{proposition}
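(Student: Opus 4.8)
The plan is to deduce the proposition as an explicit specialization of Theorem \ref{ifunc} together with the determinantal modification of Theorem \ref{detmodify}, in three steps: identify the geometry, simplify the resulting $q$-series into the displayed shape, and perform the final substitution. First, record the geometric data: $X_{\mathbf 1,-\mathbf 1}=[\{(\bb C^{s}\setminus 0)\times\bb C^{r}\}/\bb C^{*}]$ is the GIT quotient $\bb C^{s+r}\sslash\bb C^{*}$ with charge vector $(1,\dots,1,-1,\dots,-1)$; it is a genuine smooth (non-orbifold) quasi-projective variety, because every weight is $\pm 1$ so that semistable points have trivial stabilizer --- hence, in contrast with Propositions \ref{prop2} and \ref{prop3}, no twisted sectors occur and no truncation is needed. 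Its Picard rank is one, so the Novikov variable $Q$ is a single variable and effective classes are recorded by $n\in\bb Z_{\ge 0}$; and the line bundle $p$ with charge vector $(1,\dots,1,-1,\dots,-1,1)$ is exactly the vector bundle $\ca{R}$ attached to the one-dimensional representation $R=\mathrm{St}$, so that $\ca{R}$ has the single $K$-theoretic Chern root $L_1=p$, with $\beta_1=\int_\beta c_1(p)=-n$ in degree $n$.

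Second, simplify. Theorem \ref{ifunc} writes the degree-$n$ coefficient of $I^{\mathrm{St},l}_{X_{\mathbf 1,-\mathbf 1}}$ as a ratio of $K$-theoretic $q$-Pochhammer symbols: a denominator factor $(p\lambda_j^{-1}q;q)_n$ for each of the $s$ weight-$(+1)$ coordinates, a ``reversed'' Pochhammer factor in the numerator for each of the $r$ weight-$(-1)$ coordinates, multiplied by the level-$l$ determinantal factor $\big(L_1^{-\beta_1}q^{(\beta_1+1)\beta_1/2}\big)^{l}$ of Theorem \ref{detmodify}, which for $L_1=p$, $\beta_1=-n$, $l=1+s$ equals $p^{(1+s)n}q^{(1+s)\,n(n-1)/2}$. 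The only non-formal point is the bookkeeping of powers of $q$: expanding each of the $r$ numerator Pochhammers in ascending powers of $q$ produces a factor $q^{-n(n-1)/2}$ (together with a sign $(-1)^n$ and a monomial in $p$ and $\mu_i$), and multiplying these $r$ contributions by the determinantal $q^{(1+s)n(n-1)/2}$ gives precisely $\big(q^{n(n-1)/2}\big)^{1+s-r}$; meanwhile the $p$- and $\mu_i$-monomials assemble into $\prod_{i}(p\mu_i)^{-n}p^{(1+s)n}$ and the $r$ signs into $(-1)^{nr}$, which is the displayed form. This step is elementary but sensitive to signs and exponents, and I expect it to be where any slip would occur; the natural sanity checks are the case $r=0$, which must reduce to the $\mathrm{St}$-formula of Proposition \ref{prop1} after identifying $p$ with the equivariant bundle, and small values of $s$.

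Finally, substitute. Setting $p=1$, $\lambda_j^{-1}q=\beta_j$, $\mu_i=\alpha_i$, and $Q=(-1)^{1+s}z\prod_i\mu_i$ in the formula just obtained sends $(p\lambda_j^{-1}q;q)_n\mapsto(\beta_j;q)_n$ and $(p\mu_i;q)_n\mapsto(\alpha_i;q)_n$, while $\prod_i(p\mu_i)^{-n}Q^{n}\mapsto(-1)^{(1+s)n}z^{n}$, so the total sign is $(-1)^{n(r+1+s)}=(-1)^{n(1+s-r)}$; comparing with the definition of ${}_r\phi_s$ recorded above identifies the result with the stated $q$-hypergeometric series, one lower parameter being specialized so that its Pochhammer is the normalizing factor $(q;q)_n$. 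The real obstacle throughout is not this substitution but keeping the conventions straight --- the sign of $\beta_1$ in the determinantal twist, the orientation in which the negative-weight Pochhammers are expanded, and whether the equivariant parameters enter as $\lambda_j,\mu_i$ or as their inverses --- since it is exactly this interplay that produces the characteristic factor $[(-1)^{n}q^{n(n-1)/2}]^{1+s-r}$ distinguishing the level-$l$ theory from the level-$0$ theory (and, in Propositions \ref{prop1}--\ref{prop3}, yielding Ramanujan's mock theta functions).
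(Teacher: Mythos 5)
Your proposal is correct and follows the same route as the paper: specialize Proposition \ref{ifunc} to the charge vector $(1,\dots,1,-1,\dots,-1)$ with $\ca{R}=p$ and $\beta_R=-n$, then reverse the $r$ negative-weight Pochhammer factors to extract the sign $(-1)^{nr}$, the monomial $\prod_{i}(p\mu_i)^{-n}$, and the power $q^{-rn(n-1)/2}$ that combines with the level-$(1+s)$ determinantal factor into $\big(q^{n(n-1)/2}\big)^{1+s-r}$. The paper's proof is exactly this two-line computation (it does not even spell out the final substitution), so your only deviation is cosmetic --- attributing the determinantal factor to Theorem \ref{detmodify} rather than to Proposition \ref{ifunc}, where it is already built in.
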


Recall that Gromov-Witten theory (of Calabi-Yau varieties) is related to quasi-modular forms. Mock modular forms are another class of modular objects, which are different from the quasi-modular forms. Yet, they share some common properties. The above mirror theorems suggest an exciting possibility that
the natural geometric home of mock modular forms is quantum $K$-theory with non-trivial level structures. We certainly would like to investigate
this surprising connection further.

\subsection{Plan of the paper}
In Section \ref{level}, we introduce the notion of level in the $K$-theoretic quasimap theory and establish its main properties. In Section \ref{quantumk}, we define the $K$-theoretic quasimap invariants with level structure and their permutation-equivariant version. In Section \ref{conechar}, we characterize the cone $\ca{L}_{S_\infty}^{R,l}$ of level $l$ in terms of the cone $\ca{L}_{S_\infty}$ of level 0. In Section \ref{mirrorandmock}, we compute the $K$-theoretic toric $I$-functions of level $l$, and prove a toric mirror theorem for quantum $K$-theory with level structure.
\subsection{Acknowledgments}

The first author would like to thank Aaron Bertram, Prakash Belkale and Jie Zhou for interesting discussions. 
The second author would like to thank Felix Janda, Feng Qu, Valentin Tonita, Rachel Webb, Yaoxiong Wen and Fenglong You for helpful discussions.

The first author is partially supported by the NSF grant DMS 1405245 and the NSF FRG grant DMS 1159265.

\section{ Level structure}\label{level}
In this section, we assume, unless otherwise indicated, that $X$ is given by a geometric invariant theory (GIT) quotient. As mentioned in the introduction, the level structure is defined by determinant line bundles. We first recall the definition of determinant line bundles and then define the notion of level in quasimap theory.

\subsection{Determinant line bundles}
In this subsection, we briefly review the construction of determinant line bundles. 

Let $\ca{X}$ be a Deligne-Mumford stack. Let $\ca{E}$ be a locally free, finitely generated $\ca{O}_{\ca{X}}$ module. We define the determinant line bundle of $\ca{E}$ as 
\[
\text{det}(\ca{E}):=\wedge^{\text{rank}(\ca{E})}\ca{E},
\] 
where $\wedge^i$ denotes the $i$-th wedge product.
In general, let $\ca{F}^\bullet$ be a complex of coherent sheaves on $\ca{X}$ which has a bounded locally free resolution, i.e., there exists a bounded complex of locally free, finitely generated $\ca{O}_{\ca{X}}$ modules $\ca{G}^\bullet$ and a quasi-isomorphism
\[
\ca{G}^\bullet\rightarrow\ca{F}^\bullet.
\]
We define the determinant line bundle associated to $\ca{F}^\bullet$ by
\[
\text{det}(\ca{F}^\bullet):=\otimes_n\text{det}(\ca{G}^n)^{(-1)^n}.
\]

We summarize some basic properties of this construction in the following proposition. 
\begin{proposition}
Let $\ca{F}$ be a complex of coherent sheaves which has a bounded locally free resolution. Then
\begin{enumerate} 
\item The construction of $\emph{det}(\ca{F}^\bullet)$ does not depend on the locally free resolution.
\item For every short exact sequence of complexes of sheaves which have bounded locally free resolutions
\[
0\rightarrow \ca{F}^\bullet\xrightarrow{\alpha}\ca{G}^\bullet\xrightarrow{\beta}\ca{H}^\bullet\rightarrow0,
\]
we have a functorial isomorphism
\[
i(\alpha,\beta): \emph{det}(\ca{F}^\bullet)\otimes\emph{det}(\ca{H}^\bullet)\xrightarrow{\sim}\emph{det}(\ca{G}^\bullet).
\]

\item The operator $\emph{det}$ commutes with base change. To be more precise, for every (representable) morphism of Deligne-Mumford stacks
\[
g:\ca{X}\rightarrow \ca{Y},
\]
we have an isomorphism
\[
\emph{det}(\emph{L}g^*)\xrightarrow{\sim}g^*\emph{det}.
\]

\end{enumerate}
\end{proposition}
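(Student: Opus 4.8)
The plan is to follow the classical construction of Knudsen--Mumford, reducing all three statements to the single-sheaf case together with one key lemma: a bounded \emph{acyclic} complex of locally free, finitely generated $\O_\ca{X}$-modules carries a canonical trivialization of its determinant. To prove this lemma I would work \'etale-locally on $\ca{X}$, where every locally free module is a direct summand of a free one and a bounded acyclic complex of such modules splits into a direct sum of elementary complexes $(\cdots\to 0\to\ca{B}\xrightarrow{\id}\ca{B}\to 0\to\cdots)$. For each elementary complex the definition gives $\text{det}\cong\text{det}(\ca{B})\otimes\text{det}(\ca{B})^{-1}\cong\O_\ca{X}$ canonically, and the resulting trivialization is checked to be independent of the chosen splitting, hence descends to a global isomorphism $\text{det}(\ca{A}^\bullet)\cong\O_\ca{X}$. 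Compatibility of $\text{det}$ with short exact sequences of complexes of locally free modules (the locally free case of (2), immediate since such sequences split degreewise) then upgrades this to: a quasi-isomorphism of bounded complexes of locally free modules induces a canonical isomorphism of determinants, via the distinguished triangle $0\to\ca{G}^\bullet\to\mathrm{Cone}\to\ca{H}^\bullet[1]\to 0$ whose third term has canonically trivial determinant.

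For part (1), given two bounded locally free resolutions $\ca{G}^\bullet\to\ca{F}^\bullet$ and $\widetilde{\ca{G}}^\bullet\to\ca{F}^\bullet$, I would produce a third bounded locally free resolution $\ca{H}^\bullet$ dominating both compatibly over $\ca{F}^\bullet$ (a Cartan--Eilenberg-type construction, using local projectivity to build the lifts). The comparison maps $\ca{H}^\bullet\to\ca{G}^\bullet$ and $\ca{H}^\bullet\to\widetilde{\ca{G}}^\bullet$ are quasi-isomorphisms of bounded locally free complexes, so by the key lemma they induce canonical isomorphisms $\text{det}(\ca{H}^\bullet)\cong\text{det}(\ca{G}^\bullet)$ and $\text{det}(\ca{H}^\bullet)\cong\text{det}(\widetilde{\ca{G}}^\bullet)$; composing gives the desired identification. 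To see $\text{det}(\ca{F}^\bullet)$ is well defined I would verify the cocycle condition for a configuration of three resolutions, which reduces to functoriality of the trivialization of acyclic complexes under composition.

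For part (2), I would choose bounded locally free resolutions $\ca{P}^\bullet\to\ca{F}^\bullet$ and $\ca{R}^\bullet\to\ca{H}^\bullet$, then lift (again using local projectivity, and the horseshoe lemma) to a short exact sequence of bounded locally free complexes $0\to\ca{P}^\bullet\to\ca{Q}^\bullet\to\ca{R}^\bullet\to 0$ with $\ca{Q}^n=\ca{P}^n\oplus\ca{R}^n$ and $\ca{Q}^\bullet\to\ca{G}^\bullet$ a resolution compatible with $\alpha,\beta$. Degreewise, $\wedge^{\mathrm{top}}$ of an extension is the tensor product of the $\wedge^{\mathrm{top}}$'s, so $\text{det}(\ca{Q}^n)\cong\text{det}(\ca{P}^n)\otimes\text{det}(\ca{R}^n)$; taking the alternating tensor product over $n$ yields the functorial isomorphism $i(\alpha,\beta)$, and independence of the chosen resolutions follows from part (1). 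For part (3), if $\ca{G}^\bullet\to\ca{F}^\bullet$ is a bounded locally free resolution on $\ca{Y}$ and $g:\ca{X}\to\ca{Y}$ is representable, then $g^*\ca{G}^\bullet$ is a bounded complex of locally free $\O_\ca{X}$-modules representing $\mathrm{L}g^*\ca{F}^\bullet$, and since $g^*$ is monoidal and commutes with $\wedge^{\mathrm{top}}$ we get $\text{det}(\mathrm{L}g^*\ca{F}^\bullet)=\text{det}(g^*\ca{G}^\bullet)=\otimes_n(\wedge^{\mathrm{top}}g^*\ca{G}^n)^{(-1)^n}\cong g^*\text{det}(\ca{F}^\bullet)$; compatibility with the comparison isomorphisms of part (1) shows this is independent of $\ca{G}^\bullet$.

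The main obstacle is not any single computation but the bookkeeping of \emph{canonicity}: one must check that the local trivializations of acyclic complexes, the comparison isomorphisms between resolutions, and the extension isomorphisms are all independent of the auxiliary choices (splittings, liftings, resolutions) and satisfy the coherence/cocycle conditions needed to glue over a Deligne--Mumford stack. This is exactly the content worked out by Knudsen--Mumford in the scheme case; for the stacky setting one additionally invokes \'etale descent, noting that all constructions are canonical and hence compatible with the descent data. In the write-up I would either reproduce the key lemma and the cocycle verification in detail and cite Knudsen--Mumford for the formal remainder, or simply cite the scheme case and record the \'etale-descent reduction.
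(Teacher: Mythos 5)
Your proposal is correct and follows essentially the same route as the paper: the paper simply cites Knudsen--Mumford for the scheme case and observes that all the properties are preserved under flat base change, hence descend to Deligne--Mumford stacks --- exactly the ``cite the scheme case and record the \'etale-descent reduction'' option you mention at the end. Your unpacking of the Knudsen--Mumford argument (trivialization of acyclic complexes, comparison of resolutions, horseshoe lemma) is a faithful sketch of what that citation contains.
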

In the case when $\ca{X}$ is a scheme, the above proposition is proved in \cite{knudsen}. Note that these properties are preserved under flat base change. Therefore they hold for stacks as well.

\subsection{Level structure in quasimap theory}\label{quasimapthy}
In this subsection, we first recall the quasimap theory for nonsingular GIT quotients introduced in \cite{ciocan4}. Then we define the level structure in this setting and discuss its generalizations in orbifold quasimap theory.

Let $Z=\text{Spec}(A)$ be a complex affine algebraic variety in $\bb{C}^n$ and let $G$ be a reductive group acting on it. Let $\theta: G\rightarrow \bb{C}^*$ be a character determining a $G$-equivariant line bundle $L_\theta:=Z\times\bb{C}$. Let $Z^s(\theta)$ and $Z^{ss}(\theta)$ be the stable and semistable loci, respectively. Throughout the paper, we assume $Z^s(\theta)= Z^{ss}(\theta)$ is nonsingular. Furthermore, we assume that $G$ acts freely on $Z^s(\theta)$. It follows that the GIT quotient $Z\sslash_\theta G$ is nonsingular and quasi-projective. For simplicity, we drop $\theta$ from the notation of the GIT quotient. The unstable locus is defined as $Z_{\text{us}}:=Z-Z^s(\theta)$.
 Recall that we can identify the $G$-equivariant Picard group $\text{Pic}^G(Z)$ with the Picard group $\text{Pic}([Z/G])$ of the quotient stack $[Z/G]$ by sending an $G$-equivariant line bundle $L$ to $[L/G]$. Let $\beta\in\text{Hom}_{\bb{Z}}(\text{Pic}^G(Z),\bb{Z})$. 
\begin{definition}[\hspace{-0.0001 cm}\cite{ciocan4}]\label{defofquasimap}
A quasimap is a tuple $(C,p_1,\dots,p_k,P,s)$  where 
\begin{itemize}
\item $(C,p_1,\dots,p_k)$ is a connected, at most nodal, $k$-pointed projective curve of genus $g$,
\item $P$ is a principal $G$-bundle on $C$,
\item $s$ is a section of the induced fiber bundle $P\times_G Z$ on $C$ such that $(P,s)$ is of class $\beta$, i.e., the homomorphism 
\[
\text{Pic}^G(Z)\rightarrow\bb{Z},\quad L\rightarrow \text{deg}_C(s^*(P\times_GL)),
\]
is equal to $\beta$.
\end{itemize} 
We require that there are only finitely many base points, i.e., points $p\in C$ such that $s(p)\in Z_{\text{us}}$. An element $\beta\in\text{Hom}_{\bb{Z}}(\text{Pic}^G(Z),\bb{Z})$ is called $L_\theta$-effective if it can be represented as a finite sum of classes of quasimaps. We also refer to $\beta$ as a curve class. Denote by $E$ the semigroup of $L_\theta$-effective (curve) classes.
\end{definition}
A quasimap $(C,p_1,\dots,p_k,P,s)$ is called \emph{prestable} if the base points are disjoint from the nodes and marked points on $C$. Given a rational number $\epsilon>0$, a prestable quasimap is called $\epsilon$-\emph{stable} if it satisfies the following conditions
\begin{enumerate}
\item $\omega_{C,\text{log}}\otimes\ca{L}_\theta^\epsilon$ is ample, where $\omega_{C,\text{log}}:=\omega_C\big(\sum_{i=1}^k p_i\big)$ is the twisted dualizing sheaf of $C$ and 
\[
\ca{L}_\theta:=u^*(P\times_G L_\theta)\cong P\times_G\bb{C}_\theta.
\]
\item $\epsilon l(x)\leq 1$ for every point $x$ in $C$ where 
\[
l(x):=\text{length}_x\big(\text{coker}(u^*\ca{J})\rightarrow \ca{O}_C\big).
\]
Here $\ca{J}$ is the ideal sheaf of the closed subscheme $P\times_GZ_{\text{us}}$ of $P\times_GZ$.
\end{enumerate}

Let $\ca{Q}^\epsilon_{g,k}(Z\sslash G,\beta)=\{(C,p_1,\dots,p_k,P,s)\}$ be the moduli stack of $\epsilon$-stable quasimaps. It is shown in \cite{ciocan4} that this stack is a separated Deligne-Mumford stack of finite type and it is proper over the affine quotient $Z/_{\text{aff}}\,G:=\text{Spec}(A^G)$. When $Z$ has only local complete intersection singularities, the $\epsilon$-stable quasimap space $\ca{Q}^\epsilon_{g,k}(Z\sslash G,\beta)$ admits a canonical perfect obstruction theory. 
\begin{remark}\label{special}
There are two extreme chambers for the stability parameter $\epsilon$.
\begin{enumerate}
\item $(\epsilon=\infty)$-stable quasimaps. One can check that when $(g,k)\neq(0,0)$ and $\epsilon>1$, the quasimap space $\ca{Q}^\epsilon_{g,k}(Z\sslash G,\beta)$ is isomorphic to the moduli space of stable maps $\overline{\ca{M}}_{g,k}(Z\sslash G,\beta)$. When $(g,k)=(0,0)$, the same holds with $\epsilon>2$. Therefore when $\epsilon$ is sufficiently large, we denote the $\epsilon$-stable quasimap space by 
\[
\ca{Q}^\infty_{g,k}(Z\sslash G,\beta)=\overline{\ca{M}}_{g,k}(Z\sslash G,\beta)
\]
and refer to it as the $(\epsilon=\infty)$-theory.
\item
 $(\epsilon=0+)$-stable quasimaps. Fix $\beta\in E$. For each $\epsilon\in (0,\frac{1}{\beta(L_\theta)}]$, the $\epsilon$-stability is equivalent to the condition that the underlying curve $C$ of a quasimap does not have rational tails and on each rational bridge, the line bundle $\ca{L}_\theta$ has strictly positive degree. Since we need to consider different $\beta$ at the same time, we reformulate the stability condition as
 \[
 \omega_{C,\text{log}}\otimes\ca{L}_\theta^\epsilon\text{ is ample for all }\epsilon\in\bb{Q}_{>0}.
 \] 
Quasimaps which satisfy the above stability condition are referred to as $(\epsilon=0+)$-stable quasimaps.
\end{enumerate}
\end{remark}

To define the level structure, we introduce some notation first. Let $\mathfrak{M}_{g,k}$ be the algebraic stack of pre-stable nodal curves and $\mathfrak{B}un_G$ be the relative moduli stack 
\[
\mathfrak{B}un_G\xrightarrow{\phi}\mathfrak{M}_{g,k}
\] of principal $G$-bundles on the fibers of the universal curve $\mathfrak{C}_{g,k}\rightarrow\mathfrak{M}_{g,k}$. The morphism $\phi$ is smooth.
There is a forgetful morphism which forgets the section $s$ 
\[
\ca{Q}^\epsilon_{g,k}(Z\sslash G,\beta)\xrightarrow{\mu}\mathfrak{B}un_G
\]
 Let $\tilde{\pi}:\mathfrak{C}_{\mathfrak{B}un_{g,k}}\rightarrow\mathfrak{B}un_{g,k}$ be the universal curve which is the pullback of $\mathfrak{C}_{g,k}$ along $\phi$. Let $\tilde{\mathfrak{P}}\rightarrow\mathfrak{C}_{\mathfrak{B}un_{g,k}}$ be the universal principal $G$-bundle. We denote by $\pi:\ca{C}_{g,k}\rightarrow\ca{Q}^\epsilon_{g,k}(Z\sslash G,\beta)$ the universal curve on the quasimap space. Let $\ca{P}\rightarrow\ca{C}_{g,k}$ be the universal principal bundle given by the pullback of $\tilde{\mathfrak{P}}\rightarrow\mathfrak{C}_{\mathfrak{B}un_{g,k}}$.

 \begin{definition}\label{defdet1}
Given a finite-dimensional representation $R$ of $G$, we define the level-$l$ determinant line bundle over $\ca{Q}^\epsilon_{g,k}(Z\sslash G,\beta)$ as 
\begin{equation}\label{eq:maindef}
\ca{D}^{R,l}:=\big(\text{det}\,R\pi_*(\ca{P}\times_G R)\big)^{-l}.
\end{equation}
Alternatively, one can define $\ca{D}^{R,l}$ to be the pullback via $\mu$ of the determinant line bundle 
$\big(\text{det}\,R\pi_*(\tilde{\mathfrak{P}}\times_G R)\big)^{-l}$ on $\mathfrak{B}un_{g,k}$.
\end{definition}

\begin{remark}\label{defdet2}
The definition mentioned in the introduction is the second one. It is conceptually better in the sense that it does not depend on the different moduli spaces over $\mathfrak{B}un_{g,k}$. In our case, these moduli spaces are the $\epsilon$-stable quasimap spaces $\ca{Q}^\epsilon_{g,k}(Z\sslash G,\beta)$ for different $\epsilon$. However, $\mathfrak{B}un_{g,k}$ is an Artin stack, and it is technically more difficult to work with it. Formally, we will use the first definition as the working definition. 
\end{remark}

\begin{remark}\label{notrep}
Note that in Definition \ref{defdet1}, the bundle $\ca{P}\times_G R$ is the pullback of the vector bundle $[Z\times R/G]\rightarrow[Z/G]$ along the evaluation map to the quotient stack $[Z/G]$. Therefore, given a vector bundle $\ca{R}$ on $X$, we can use (\ref{eq:maindef}) to define a determinant line bundle over the moduli space of stable maps $\overline{\ca{M}}_{g,k}(X,\beta)$, even when $X$ is not a GIT quotient. To be more precise, let $\pi:\ca{C}\rightarrow\overline{\ca{M}}_{g,k}(X,\beta)$ be the universal curve and let $\text{ev}:\ca{C}\rightarrow X$ be the universal evaluation map. We define the level-$l$ determinant line bundle as
\[
{\ca{D}}^{R,l}:=\big(\text{det}\,R\pi_*(\text{ev}^*\ca{R})\big)^{-l}.
\]
We will often abuse the notation by referring to the vector bundle $\ca{R}$ as the ``representation'' $R$.
\end{remark}

The above construction can be easily generalized to orbifold quasimap theory. To be more precise, suppose the target can be written as $[Z^s/G]$. Now, we do not assume $G$ acts freely on the stable locus $Z^s(\theta)$. Therefore $[Z^s/G]$ is in general a Deligne-Mumford stack. The quasimap theory for such orbifold GIT targets is established in \cite{ciocan2}. According to \cite[Section 2.4.5]{ciocan2}, we still have universal curves and universal principal $G$-bundles over moduli spaces of $\epsilon$-stable orbifold quasimaps. Therefore the level-$l$ determinant line bundle can still be defined using (\ref{eq:maindef}).

 \subsection{Properties of the level structure in quasimap theory}\label{property}
 
In this subsection, we study the level-$l$ determinant line bundle in the case $\beta=0$ and its pullbacks along some natural morphisms between moduli spaces of quasimaps. An important property of the level structure $\ca{D}^{R,l}$ is that it splits ``correctly'' among nodal strata (see Proposition \ref{cutedge}). In the following discussion, we assume $X=Z\sslash G$ is a GIT quotient so that moduli spaces of quasimaps are defined. When $X$ is a smooth projective variety, but not a GIT quotient, the same results hold for determinant line bundles defined in Remark \ref{notrep}. In fact, the arguments used in the proofs are identical for both cases.
\subsubsection{Mapping to a point}

Assume that $\beta=0$. Then any quasimap is a constant map and the morphism
\[
\ca{Q}^\epsilon_{g,k}(X,0)\xrightarrow{\text{stab$\times$ev}}\overline{M}_{g,k}\times X
\] 
is an isomorphism. Here $\text{stab}: \ca{Q}^\epsilon_{g,k}(X,0)\rightarrow\overline{M}_{g,k}$ denotes the stabilization morphism of source curves of quasimaps, and $\text{ev}:\ca{Q}^\epsilon_{g,k}(X,0)\rightarrow X$ denotes the constant evaluation morphism. Let $P$ be the principal $G$-bundle $Z^s\rightarrow X=Z\sslash G$.
\begin{lemma}\label{trivial}
The universal bundle $\ca{P}$ over the universal curve $\ca{C}=\overline{C}_{g,k}\times X$ is equal to $\pi_2^*(P)$, where $\overline{C}_{g,k}$ is the universal curve over $\overline{M}_{g,k}$ and $\pi_2: \overline{C}_{g,k}\times X\rightarrow X$ is the second projection.
\begin{proof}
In general, there is an evaluation map from the universal curve $\ca{C}$ to the quotient stack $[Z/G]$ and $\ca{P}$ is the pullback of $P$ along this map. The lemma follows from the observation that the evaluation map is given by the second projection $\pi_2$ in this case.
\end{proof}
\end{lemma}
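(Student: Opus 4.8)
The plan is to unwind the definition of the universal principal bundle $\ca{P}$ as the pullback of a tautological $G$-bundle along a universal evaluation map, and then to observe that for $\beta=0$ this evaluation map is simply the second projection $\pi_2$. First I would recall the standard dictionary: a quasimap is the same datum as a morphism $C\to[Z/G]$ (classifying the principal bundle together with the section of the associated $Z$-bundle), under which the principal bundle is the pullback of the tautological $G$-bundle $Z\to[Z/G]$. Applying this to the universal family yields a universal evaluation map $\ev_{\ca{C}}\colon\ca{C}\to[Z/G]$ with $\ca{P}=\ev_{\ca{C}}^*(Z\to[Z/G])$. When $\beta=0$, the line bundle $\ca{L}_\theta$ has degree $0$ on every source curve, so the stability condition of Remark \ref{special} forces $\omega_{C,\log}$ to be ample; hence source curves are already stable and have no base points, the universal section lands in the stable locus, and $\ev_{\ca{C}}$ factors through $X=[Z^s/G]$. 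Thus $\ca{P}=\ev_{\ca{C}}^*P$ for the evaluation map $\ev_{\ca{C}}\colon\ca{C}\to X$, where $P$ is the $G$-bundle $Z^s\to X$ as in the statement.

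It then remains to identify $\ev_{\ca{C}}$ under the isomorphism $\mathrm{stab}\times\ev\colon\ca{Q}^\epsilon_{g,k}(X,0)\xrightarrow{\sim}\overline{M}_{g,k}\times X$. The universal curve of a product family is the product of the universal curves, so $\ca{C}\cong\overline{C}_{g,k}\times X$ as stated, and I would check $\ev_{\ca{C}}=\pi_2$ fiberwise. At a geometric point $\bigl([C,p_\bullet],x\bigr)$ of $\overline{M}_{g,k}\times X$, the corresponding class-$0$ quasimap is the constant map $C\to X$ with value $x$, so $\ev_{\ca{C}}$ restricted to the fiber $C\times\{x\}\cong C$ is the constant map to $x$; since this holds over all of $\overline{M}_{g,k}\times X$, we get $\ev_{\ca{C}}=\pi_2$, whence $\ca{P}=\pi_2^*P$.

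I do not anticipate a genuine difficulty: once $\ca{P}$ is identified with the pullback of the tautological $G$-bundle along the universal evaluation map, the argument is entirely formal. The only points requiring a word of justification are the reduction to an $X$-valued evaluation map --- i.e.\ that for $\beta=0$ every source curve is stable and base-point free, which follows from the stability condition together with $\deg\ca{L}_\theta=0$ --- and the compatibility of the isomorphism $\ca{Q}^\epsilon_{g,k}(X,0)\cong\overline{M}_{g,k}\times X$ with the identifications of universal curves and of evaluation morphisms.
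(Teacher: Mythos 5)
Your proposal is correct and follows essentially the same route as the paper's own (much terser) proof: identify $\ca{P}$ as the pullback of the tautological $G$-bundle along the universal evaluation map, then observe that for $\beta=0$ this evaluation map is the second projection $\pi_2$. The extra details you supply (factoring the evaluation through $X$ and the fiberwise check under the isomorphism $\ca{Q}^\epsilon_{g,k}(X,0)\cong\overline{M}_{g,k}\times X$) are exactly the justifications the paper leaves implicit.
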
 
\begin{corollary}\label{cormappt}
Let $\ca{R}:=P\times_G R$ be the associated vector bundle on $X$ and let $\pi:\overline{C}_{g,k}\rightarrow\overline{M}_{g,k}$ be the canonical morphism. We have
\[ 
\ca{D}^{R,l}=(\wedge^{\emph{rk}(R)g}(R^1\pi_*\ca{O}_{\overline{C}_{g,k}}\boxtimes\ca{R})\big)^l\otimes \big(\wedge^{\emph{rk}(R)}\ca{R}\big)^{-l}.
\]
\end{corollary}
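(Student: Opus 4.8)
The plan is to unwind Definition \ref{defdet1} using Lemma \ref{trivial}, and then compute the relevant derived pushforward by the projection formula and flat base change. Under the isomorphism $\ca{Q}^\epsilon_{g,k}(X,0)\cong\overline{M}_{g,k}\times X$, the universal curve from Definition \ref{defdet1} becomes $\widehat\pi:=\pi\times\mathrm{id}_X\colon\overline{C}_{g,k}\times X\to\overline{M}_{g,k}\times X$, where now $\pi\colon\overline{C}_{g,k}\to\overline{M}_{g,k}$ is the universal curve over $\overline{M}_{g,k}$ as in the statement. By Lemma \ref{trivial} we have $\ca{P}=\pi_2^*P$, hence $\ca{P}\times_GR=\pi_2^*(P\times_GR)=\pi_2^*\ca{R}$; and since $\pi_2=\mathrm{pr}_X\circ\widehat\pi$, where $\mathrm{pr}_X\colon\overline{M}_{g,k}\times X\to X$ is the projection, this equals $\widehat\pi^*(\mathrm{pr}_X^*\ca{R})$.

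First I would compute $R\widehat\pi_*(\ca{P}\times_GR)$. By the projection formula (applicable since $\mathrm{pr}_X^*\ca{R}$ is locally free),
\[
R\widehat\pi_*(\ca{P}\times_GR)=R\widehat\pi_*\,\widehat\pi^*(\mathrm{pr}_X^*\ca{R})\cong\mathrm{pr}_X^*\ca{R}\otimes R\widehat\pi_*\ca{O}_{\overline{C}_{g,k}\times X}.
\]
The square expressing $\widehat\pi$ as the base change of $\pi$ along the flat morphism $\mathrm{pr}_1\colon\overline{M}_{g,k}\times X\to\overline{M}_{g,k}$ is Cartesian, so flat base change gives $R\widehat\pi_*\ca{O}_{\overline{C}_{g,k}\times X}\cong\mathrm{pr}_1^*\,R\pi_*\ca{O}_{\overline{C}_{g,k}}$. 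The fibers of $\pi$ are connected nodal curves of arithmetic genus $g$, so $h^0(\ca{O})=1$ and $h^1(\ca{O})=g$ are constant while $R^{\geq2}\pi_*\ca{O}=0$; hence $R^0\pi_*\ca{O}_{\overline{C}_{g,k}}=\ca{O}_{\overline{M}_{g,k}}$ and $R^1\pi_*\ca{O}_{\overline{C}_{g,k}}$ is locally free of rank $g$. Feeding this back (and using that tensoring with the locally free sheaf $\mathrm{pr}_X^*\ca{R}$ is exact), $R\widehat\pi_*(\ca{P}\times_GR)$ is quasi-isomorphic to a two-term complex of locally free sheaves with
\[
R^0\widehat\pi_*(\ca{P}\times_GR)=\ca{O}_{\overline{M}_{g,k}}\boxtimes\ca{R},\qquad R^1\widehat\pi_*(\ca{P}\times_GR)=R^1\pi_*\ca{O}_{\overline{C}_{g,k}}\boxtimes\ca{R},
\]
of ranks $\mathrm{rk}(R)$ and $\mathrm{rk}(R)\cdot g$ respectively.

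Finally I would take determinants. Since $\det$ is multiplicative on short exact sequences (cf. the properties recalled in Section \ref{level}), $\det R\widehat\pi_*(\ca{P}\times_GR)=\det\!\big(R^0\widehat\pi_*(\ca{P}\times_GR)\big)\otimes\det\!\big(R^1\widehat\pi_*(\ca{P}\times_GR)\big)^{-1}$; the first factor is $\mathrm{pr}_X^*(\wedge^{\mathrm{rk}(R)}\ca{R})$ and the second is $\wedge^{\mathrm{rk}(R)g}\big(R^1\pi_*\ca{O}_{\overline{C}_{g,k}}\boxtimes\ca{R}\big)$. Substituting into $\ca{D}^{R,l}=\big(\det R\widehat\pi_*(\ca{P}\times_GR)\big)^{-l}$ and rearranging yields the claimed formula. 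Every step is standard; the only point requiring a little attention is verifying that $R\widehat\pi_*(\ca{P}\times_GR)$ is genuinely concentrated in degrees $0$ and $1$ with locally free cohomology, so that the Knudsen–Mumford determinant is computed termwise from $R^0$ and $R^1$ — and this is exactly where the constancy of $h^0$ and $h^1$ of the structure sheaf along the fibers (together with $R^{\geq2}=0$ for curves) is used.
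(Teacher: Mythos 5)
Your proof is correct and follows essentially the same route as the paper's: apply Lemma \ref{trivial} to identify $\ca{P}\times_G R$ with the pullback of $\ca{R}$, use the projection formula (and base change) to write $R\pi_*$ of it as $\ca{O}_{\overline{M}_{g,k}}\boxtimes\ca{R}-R^1\pi_*\ca{O}_{\overline{C}_{g,k}}\boxtimes\ca{R}$, note that $R^1\pi_*\ca{O}_{\overline{C}_{g,k}}$ has rank $g$, and take determinants. You simply spell out the flat base change and the vanishing of $R^{\geq 2}$ that the paper leaves implicit.
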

\begin{proof}
 By Lemma \ref{trivial}, we have $\ca{P}\times_G R=\pi_2^*(\ca{R})$. Therefore the pushforward $R\pi_*(\pi_2^*(\ca{R}))$ is equal to $\ca{O}_{\overline{M}_{g,k}}\boxtimes\ca{R}-R^1\pi_*\ca{O}_{\overline{C}_{g,k}}\boxtimes\ca{R}$ via the projection formula. Note that $\text{rk}(R^1\pi_*\ca{O}_{\overline{C}_{g,k}})=g$. 
\end{proof}

\subsubsection{Cutting edges}
For $i=1,2$, we denote by $\text{ev}_{k_i}:\ca{Q}^\epsilon_{g,k_{k_i}}(X,\beta_i)\rightarrow X$ the evaluation morphism at the last marking. Consider the cartesian diagram
\[ \begin{tikzcd}
\ca{Q}^\epsilon_{g,k_1}(X,\beta_1)\times_X\ca{Q}^\epsilon_{g,k_2}(X,\beta_2)\arrow{r}{\Phi} \arrow{d}{} & \ca{Q}^\epsilon_{g,k_1}(X,\beta_1)\times\ca{Q}^\epsilon_{g,k_2}(X,\beta_2)\arrow{d}{\text{ev}_{k_1}\times \text{ev}_{k_2}} \\
X\arrow{r}{\Delta}&X\times X,
\end{tikzcd}
\]
where $\Delta$ is the diagonal embedding of $X$. Let $\ca{C}$ and $\ca{C}'$ denote the universal curves over $\ca{Q}^\epsilon_{g,k_1}(X,\beta_1)\times_X\ca{Q}^\epsilon_{g,k_2}(X,\beta_2)$ and $\ca{Q}^\epsilon_{g,k_1}(X,\beta_1)\times\ca{Q}^\epsilon_{g,k_2}(X,\beta_2)$, respectively. Let $\ca{P}$ and $\ca{P}'$ be the universal principal $G$-bundles over $\ca{C}$ and $\ca{C}'$, respectively. We can define level structures $\ca{D}^{R,l}_{\ca{Q}^\epsilon_{g,k_1}(X,\beta_1)\times\ca{Q}^\epsilon_{g,k_2}(X,\beta_2)}=\ca{D}^{R,l}_{\ca{Q}^\epsilon_{g,k_1}(X,\beta_1)}\boxtimes\ca{D}^{R,l}_{\ca{Q}^\epsilon_{g,k_2}(X,\beta_2)}$ and $\ca{D}^{R,l}_{\ca{Q}^\epsilon_{g,k_1}(X,\beta_1)\times_X\ca{Q}^\epsilon_{g,k_2}(X,\beta_2)}$ using (\ref{eq:maindef}). The following proposition shows that the level structure splits ``correctly'' among nodal strata.

\begin{proposition}\label{cutedge} Let $x:\ca{Q}^\epsilon_{g,k_1}(X,\beta_1)\times_X\ca{Q}^\epsilon_{g,k_2}(X,\beta_2)\rightarrow \mathcal{C}$ be the section corresponding to the node. Then we have
\begin{equation}\label{eq:cuttingedges}
\Phi^*\big(\ca{D}^{R,l}_{\ca{Q}^\epsilon_{g,k_1}(X,\beta_1)}\boxtimes\ca{D}^{R,l}_{\ca{Q}^\epsilon_{g,k_2}(X,\beta_2)}\big)=\ca{D}^{R,l}_{\ca{Q}^\epsilon_{g,k_1}(X,\beta_1)\times_X\ca{Q}^\epsilon_{g,k_2}(X,\beta_2)}\otimes\emph{det}\big(x^*(\ca{P}\times_G R)\big)^{-l}.
\end{equation}
\end{proposition}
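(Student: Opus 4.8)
The plan is to analyze how the universal curve and universal principal bundle degenerate when we restrict to the nodal boundary stratum, and then apply the additivity of $\det$ over exact sequences (Proposition, item (2)) together with its compatibility with base change (item (3)). First I would set up the geometry: the fiber product $\ca{Q}^\epsilon_{g,k_1}(X,\beta_1)\times_X\ca{Q}^\epsilon_{g,k_2}(X,\beta_2)$ carries a universal curve $\ca{C}$ obtained by gluing the two pulled-back universal curves $\ca{C}_1$ and $\ca{C}_2$ along the sections corresponding to the last markings; call these glued-in sections $\sigma_1,\sigma_2$ and let $x$ denote the resulting node section. Pulling back under $\Phi$, the universal curve $\Phi^*\ca{C}'$ is the disjoint union $\ca{C}_1\sqcup\ca{C}_2$, and the normalization map $\nu:\ca{C}_1\sqcup\ca{C}_2\to\ca{C}$ identifies $\sigma_1$ with $\sigma_2$. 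Likewise the universal bundle $\ca{P}$ on $\ca{C}$ pulls back along $\nu$ to the disjoint union of the two universal bundles, and the clutching data is precisely the identification of $x^*\ca{P}$ as the common fiber.

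Next I would write down the normalization exact sequence for the structure sheaves twisted by the vector bundle $\ca{V}:=\ca{P}\times_G R$ (here $R$ is used as the ``representation'', i.e.\ $\ca{V}$ is $\ev^*\ca{R}$):
\[
0\to\ca{V}\to\nu_*\nu^*\ca{V}\to x_*\big(x^*\ca{V}\big)\to 0
\]
on $\ca{C}$. Applying $R\pi_*$ and using that $\pi$ restricted to the node section is the identity gives a distinguished triangle relating $R\pi_*\ca{V}$, $R\pi_{1*}\ca{V}_1\oplus R\pi_{2*}\ca{V}_2$, and $x^*\ca{V}$ (in degree $0$). Taking $\det$ of this triangle and using additivity over short exact sequences yields
\[
\text{det}\,R\pi_*\ca{V}\otimes\text{det}\big(x^*\ca{V}\big)=\text{det}\,R\pi_{1*}\ca{V}_1\otimes\text{det}\,R\pi_{2*}\ca{V}_2.
\]
Raising everything to the power $-l$ and translating the left- and right-hand terms back into the $\ca{D}^{R,l}$ notation — using base-change compatibility to identify $\text{det}\,R\pi_{i*}\ca{V}_i$ with the pullback of the determinant on the individual factor, hence of $\boxtimes$ with $\Phi^*$ on the product — gives exactly \eqref{eq:cuttingedges}: $\Phi^*\big(\ca{D}^{R,l}_{\ca{Q}^\epsilon_{g,k_1}}\boxtimes\ca{D}^{R,l}_{\ca{Q}^\epsilon_{g,k_2}}\big)=\ca{D}^{R,l}_{\times_X}\otimes\text{det}\big(x^*(\ca{P}\times_G R)\big)^{-l}$.

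The main obstacle, and the step requiring the most care, is verifying that the universal curve over the fiber product really is the nodal gluing of the two pulled-back curves along the last-marking sections, and correspondingly that the universal $G$-bundle is obtained by clutching the two universal bundles along those sections — i.e.\ that the normalization sequence above is the correct one with the node section $x$ appearing with the stated multiplicity. This is essentially the statement that the forgetful morphism to $\mathfrak{B}un_G$ and the construction of the universal data commute with the boundary gluing maps; it holds because $\mathfrak{B}un_G\to\mathfrak{M}_{g,k}$ and the gluing morphisms on $\mathfrak{M}_{g,k}$ are compatible, and because a section of $P\times_G Z$ glues iff its value at the node agrees on the two branches (which, since both marked points map to the same point of $X$ under the fiber product, it does, once one fixes the identification of fibers — and the choice of this identification is exactly the $x^*\ca{P}$-torsor ambiguity, accounting for the correction term). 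Once this geometric input is in place, the determinant-theoretic manipulation is formal. I would also remark that the identical argument, using the universal curve and $\ev^*\ca{R}$ on $\overline{\ca{M}}_{g,k}(X,\beta)$ in place of the quasimap spaces, proves the analogous splitting for the determinant line bundles of Remark \ref{notrep}.
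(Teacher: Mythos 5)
Your proposal is correct and follows essentially the same route as the paper: the paper likewise identifies the universal curve over the fiber product as the gluing of the two pulled-back universal curves along the marking sections, applies the normalization exact sequence $0\to\ca{F}\to p_*p^*\ca{F}\to x_*x^*\ca{F}\to 0$ for $\ca{F}=\ca{P}\times_G R$, pushes forward, takes determinants, and concludes by cohomology and base change. The only cosmetic difference is that you spell out the clutching of the universal $G$-bundle and the torsor ambiguity at the node, which the paper leaves implicit.
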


\begin{proof}
Consider the following commutative diagram
\[ \begin{tikzcd}
\Phi^*\ca{C}'\arrow{r}{p} \arrow[swap]{dr}{\pi'} &\ca{C}\arrow{d}{\pi} \\
&\ca{Q}^\epsilon_{g,k_1}(X,\beta_1)\times_X\ca{Q}^\epsilon_{g,k_2}(X,\beta_2).
\end{tikzcd}
\]
Notice that $\ca{C}$ is obtained by gluing along two sections of marked points of $\Phi^*\ca{C}'$. 
For any locally free sheaf $\ca{F}$ on $\ca{C}$, we have a short exact sequence (the normalization exact sequence)
\[
0\rightarrow \ca{F}\rightarrow p_*p^* \ca{F}\rightarrow x_*x^* \ca{F}\rightarrow 0.
\]
It induces the following natural isomorphism
\begin{equation}\label{eq:splitting}
\text{det}(R\pi_*(p_*p^* \ca{F}))^{-1}\cong\text{det}(R\pi_*(\ca{F}))^{-1}\otimes\text{det}(R\pi_*(x_*x^* \ca{F}))^{-1}.
\end{equation}
Note that
 \[
\text{det}(R\pi_*(p_*p^* \ca{F}))^{-1}=\text{det}(R\pi'_*(p^* \ca{F}))^{-1}\quad \text{and}\quad
\text{det}(R\pi_*(x_*x^* \ca{F}))^{-1}=\text{det}(x^*\ca{F})^{-1}.
\]
 Now take $\ca{F}=\ca{P}\times_G R$ and $\ca{F}'=\ca{P'}\times_G R$. Finally, the lemma follows from the fact that $p^*\ca{F}\cong\Phi^*\ca{F}'$, equation (\ref{eq:splitting}), and cohomology and base change.
 \end{proof}

\subsubsection{Contractions}\label{Contraction}
Fix $g_1,g_2$ and $k_1,k_2$ such that $g=g_1+g_2$ and $k=k_1+k_2$. We denote the basic gluing maps by
\begin{align*}
r:\overline{M}_{g_1,k_1+1}&\times\overline{M}_{g_2,k_2+1}\rightarrow\overline{M}_{g,k},\\
q:\overline{M}&_{g-1,k+2}\rightarrow\overline{M}_{g,k}.
\end{align*}
Let $k'$ be a non-negative integer. Let $\underline{k}'=(k_1',\dots,k'_{m+1})$ and $\underline{\beta}=(\beta_1,\dots,\beta_{m+1})$ be partitions of $k'$ and $\beta$, respectively. For simplicity, we denote by $\ca{Q}^\epsilon_{m,\underline{k}',\underline{\beta}}$ the fiber product
\[
\ca{Q}^\epsilon_{g_1,k_1+k'_1+1}(X,\beta_1)\times_X\underbrace{\ca{Q}^\epsilon_{0,2+k'_3}(X,\beta_3)\times_X\dots\ca{Q}^\epsilon_{0,2+k'_{m+1}}(X,\beta_{m+1})}_{m-1\ \text{factors}}\times_X\ca{Q}^\epsilon_{g_2,k_2+k'_2+1}(X,\beta_2)
\]
Let $\text{st}:\ca{Q}^\epsilon_{g,k+k'}(X,\beta)\rightarrow\overline{M}_{g,k}$ be the morphism defined by forgetting the (quasi)map and the last $k'$ markings, then stabilizing the source curve. 
Consider the following commutative diagram.
\[ \begin{tikzcd}
\bigsqcup\ca{Q}^\epsilon_{m,\underline{k}',\underline{\beta}}\arrow{r}{\sqcup\, r_{m,\underline{k}',\underline{\beta}}} \arrow{d}{} & \ca{Q}^\epsilon_{g,k+k'}(X,\beta)\arrow{d}{\text{st}} \\
\overline{M}_{g_1,k_1+1}\times\overline{M}_{g_2,k_2+1}\arrow{r}{r}& \overline{M}_{g,k}
\end{tikzcd}
\]
Here the disjoint union is over partitions $\underline{k}'$ of $k'$ and partitions $\underline{\beta}$ of $\beta$. The above commutative diagram induces a morphism
\[
\Psi_m:\bigsqcup\ca{Q}^\epsilon_{m,\underline{k}',\underline{\beta}}\rightarrow\big(\overline{M}_{g_1,k_1+1}\times\overline{M}_{g_2,k_2+1}\big)\times_{\overline{M}_{g,k}} \ca{Q}^\epsilon_{g,k+k'}(X,\beta).
\]
Using the same argument as in the proof of \cite[Proposition 11]{lee1}, one can show that the virtual structure sheaves satisfy
\[
\sum_m(-1)^{m+1}\Psi_{m*}\sum_{\underline{k}',\underline{\beta}}\ca{O}^\vir_{\ca{Q}^\epsilon_{m,\underline{k}',\underline{\beta}}}=r^!\,\ca{O}^\vir_{\ca{Q}^\epsilon_{g,k+k'}(X,\beta)}.
\]

Let $\ca{C}_{g,k+k'}$ and $\ca{C}_{m,\underline{k}',\underline{\beta}}$ be the universal curves on $\ca{Q}^\epsilon_{g,k+k'}(X,\beta)$ and $\ca{Q}^\epsilon_{m,\underline{k}',\underline{\beta}}$, respectively. Let $\ca{P}_{g,k+k'}$ and $\ca{P}_{m,\underline{k}',\underline{\beta}}$ be the corresponding universal principal $G$-bundles. The level structures $\ca{D}^{R,l}_{\ca{Q}^\epsilon_{g,k+k'}(X,\beta)}$ and $\ca{D}^{R,l}_{\ca{Q}^\epsilon_{m,\underline{k}',\underline{\beta}}}$ can be defined using (\ref{eq:maindef}). To prove that quantum $K$-theory with level structure satisfies the same axioms as Givental-Lee's quantum $K$-theory, we need the following proposition.

\begin{proposition}\label{cont1}
We have
\[
\ca{D}^{R,l}_{\ca{Q}^\epsilon_{m,\underline{k}',\underline{\beta}}}=(r_{m,\underline{k}',\underline{\beta}})^*\,\ca{D}^{R,l}_{\ca{Q}^\epsilon_{g,k+k'}(X,\beta)}.
\]
\end{proposition}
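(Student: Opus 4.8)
The plan is to reduce the statement to the fact that the level-$l$ determinant line bundle depends only on the universal curve together with its universal principal $G$-bundle, and that both of these pull back correctly along the gluing morphism $r_{m,\underline{k}',\underline{\beta}}$. Write $\pi:\ca{C}_{g,k+k'}\to\ca{Q}^\epsilon_{g,k+k'}(X,\beta)$ and $\pi':\ca{C}_{m,\underline{k}',\underline{\beta}}\to\ca{Q}^\epsilon_{m,\underline{k}',\underline{\beta}}$ for the two universal curves, with universal principal bundles $\ca{P}_{g,k+k'}$ and $\ca{P}_{m,\underline{k}',\underline{\beta}}$; recall from Remark \ref{notrep} that $\ca{P}\times_G R$ is the pullback of the vector bundle $[Z\times R/G]\to[Z/G]$ along the evaluation map $\ca{C}\to[Z/G]$ determined by the quasimap section.

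First I would spell out the modular description of $r_{m,\underline{k}',\underline{\beta}}$. A point of $\ca{Q}^\epsilon_{m,\underline{k}',\underline{\beta}}$ is a tuple of $\epsilon$-stable quasimaps whose evaluations at the distinguished markings agree in pairs, and $r_{m,\underline{k}',\underline{\beta}}$ sends such a tuple to the quasimap obtained by gluing the source curves at those markings, with principal bundle and section glued accordingly; the matching of the evaluations is precisely what makes the section (hence the map to $[Z/G]$) glue. In particular, the source curve of $r_{m,\underline{k}',\underline{\beta}}(\text{tuple})$ — with its principal $G$-bundle and its evaluation map to $[Z/G]$ — is literally the glued curve underlying the tuple. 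Hence there is a canonical isomorphism $\ca{C}_{m,\underline{k}',\underline{\beta}}\cong (r_{m,\underline{k}',\underline{\beta}})^*\ca{C}_{g,k+k'}$ over $\ca{Q}^\epsilon_{m,\underline{k}',\underline{\beta}}$, identifying $\pi'$ with the base change of $\pi$, under which $\ca{P}_{m,\underline{k}',\underline{\beta}}$ corresponds to $(r_{m,\underline{k}',\underline{\beta}})^*\ca{P}_{g,k+k'}$ and therefore $\ca{P}_{m,\underline{k}',\underline{\beta}}\times_G R$ corresponds to the pullback of $\ca{P}_{g,k+k'}\times_G R$. This is exactly where the argument differs from Proposition \ref{cutedge}: the morphism $\text{st}$ stabilizes the source curve, but $r_{m,\underline{k}',\underline{\beta}}$ does not modify the curve of the quasimap itself, so no normalization exact sequence — and hence no correction term supported at the nodes — enters.

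Given this, I would finish with cohomology and base change together with part (3) of the Proposition on determinant line bundles ($\text{det}$ commutes with base change). The relevant square relating $\pi$ and $\pi'$ is cartesian by the identification above, and since $\ca{P}_{g,k+k'}\times_G R$ is locally free and $\pi$ proper and flat, $R\pi_*(\ca{P}_{g,k+k'}\times_G R)$ has a bounded locally free resolution whose formation commutes with the pullback, so
\[
L(r_{m,\underline{k}',\underline{\beta}})^*\, R\pi_*(\ca{P}_{g,k+k'}\times_G R)\;\cong\; R\pi'_*(\ca{P}_{m,\underline{k}',\underline{\beta}}\times_G R).
\]
Applying $\text{det}$ and raising to the $(-l)$-th power yields
\[
(r_{m,\underline{k}',\underline{\beta}})^*\,\ca{D}^{R,l}_{\ca{Q}^\epsilon_{g,k+k'}(X,\beta)}=\big(\text{det}\,R\pi'_*(\ca{P}_{m,\underline{k}',\underline{\beta}}\times_G R)\big)^{-l}=\ca{D}^{R,l}_{\ca{Q}^\epsilon_{m,\underline{k}',\underline{\beta}}},
\]
which is the claim.

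The main obstacle is the second step: carefully justifying the canonical identification $\ca{C}_{m,\underline{k}',\underline{\beta}}\cong (r_{m,\underline{k}',\underline{\beta}})^*\ca{C}_{g,k+k'}$ compatibly with the principal bundles and the evaluation maps, i.e. writing down the functor-of-points description of the gluing morphism $r_{m,\underline{k}',\underline{\beta}}$ over an arbitrary base and checking that the gluing of the sections is well-defined exactly because of the fiber-product condition over $X$. Once that identification is established, everything else is formal from the base-change properties of $\text{det}$ already recorded in the excerpt.
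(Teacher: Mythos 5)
Your proposal is correct and takes essentially the same route as the paper, whose proof consists precisely of exhibiting the cartesian square relating $\ca{C}_{m,\underline{k}',\underline{\beta}}$ to $\ca{C}_{g,k+k'}$ over $r_{m,\underline{k}',\underline{\beta}}$ and invoking cohomology and base change (together with the base-change property of $\mathrm{det}$). The extra detail you supply — the functor-of-points description of the gluing map, the identification of the universal principal bundles, and the observation that no normalization sequence or nodal correction term is needed here, in contrast to Proposition \ref{cutedge} — is exactly the justification the paper leaves implicit.
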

\begin{proof}
The proposition follows from the following cartesian diagram and cohomology and base change.
\[ \begin{tikzcd}
\ca{C}_{m,\underline{k}',\underline{\beta}}\arrow{r}\arrow{d}&\ca{C}_{g,k+k'}\arrow{d}\\
\ca{Q}^\epsilon_{m,\underline{k}',\underline{\beta}}\arrow{r}{r_{m,\underline{k}',\underline{\beta}}} & \ca{Q}^\epsilon_{g,k+k'}(X,\beta)
\end{tikzcd}
\]
\end{proof}

\section{The $K$-theoretic quasimap theory with level structure} \label{quantumk}
In this section, we first define the $K$-theoretic quasimap invariants with level structure and their permutation-equivariant version. For most of the discussion, we assume that the GIT quotient $Z\sslash G$ is nonsingular and projective. The cases when the target $[Z^s/G]$ is noncompact or an orbifold are discussed at the end of this section.

\subsection{$K$-theoretic quasimap invariants with level structure}
In this subsection, we first briefly recall Givental-Lee's quantum $K$-theory. Then we define $K$-theoretic quasimap invariants with level structure. 

The quantum $K$-theory or $K$-theoretic Gromov-Witten theory was introduced by Givental-Lee \cite{givental3,lee1}. Let $X$ be a smooth projective variety and let $\overline{\ca{M}}_{g,k}(X,\beta)$ be the moduli space of stable maps to $X$. The moduli space is known to be a proper Deligne-Mumford stack (see for example \cite{behrend2}). In particular, for any coherent sheaf $\mathcal{E}$ on $\overline{\ca{M}}_{g,k}(X,\beta)$, we can consider its $K$-theoretic pushforward to the point $\text{Spec}\,\bb{C}$, i.e., we can take its Euler characteristic
\[
\chi(\mathcal{E})=\sum_i(-1)^ih^i(\mathcal{E}),
\]
where $h^i(\ca{E}):=\text{dim}_{\bb{C}}\,H^i(\overline{\ca{M}}_{g,k}(X,\beta),\ca{E})$.

From the perfect obstruction theory, Lee \cite{lee1} constructs a virtual structure sheaf $\ca{O}^{\text{vir}}\in K_0(\overline{\ca{M}}_{g,k}(X,\beta))$, where $K_0(\overline{\ca{M}}_{g,k}(X,\beta))$ denotes the Grothendieck group of coherent sheaves on $\overline{\ca{M}}_{g,k}(X,\beta)$. The virtual structure sheaf $\ca{O}^{\text{vir}}$ has the following properties:
\begin{enumerate}
\item If the obstruction sheaf is trivial and hence $\overline{\ca{M}}_{g,k}(X,\beta)$ is smooth, then $\ca{O}^{\text{vir}}$ is the structure sheaf of $\overline{\ca{M}}_{g,k}(X,\beta)$.

\item If the obstruction sheaf $\text{Obs}$ is locally free, then $\ca{O}^{\text{vir}}=\sum_i(-1)^i\wedge^i\text{Obs}^\vee$. Here $\wedge^i\, \text{Obs}^\vee$ denotes the $i$-th wedge product of the dual of the obstruction bundle.
\end{enumerate}

Since we assume $X$ to be smooth, the Grothendieck group of locally free sheaves on $X$, denoted by $K^0(X)$, is isomorphic to the Grothendieck group of coherent sheaves $K_0(X)$. We denote both of them by $K(X)$. Suppose that $E_i$ are $K$-theory elements in $K(X)$ and let $L_i$ denote the $i$-th cotangent line bundles. The \emph{$K$-theoretic Gromov-Witten invariants} are defined by
\[
\big\langle E_1L_1^{l_1},\dots, E_kL_k^{l_k}\big\rangle_{g,k,\beta}=\chi\big(\overline{\ca{M}}_{g,k}(X,\beta),\prod_i\text{ev}_i^* E_i\otimes L_i^{l_i}\otimes\ca{O}^{\text{vir}}\big),
\]
where $\text{ev}_i:\overline{\ca{M}}_{g,k}(X,\beta)\rightarrow X$ are the evaluation morphisms at the $i$-th marking. Let $E\subset H_2(X,\bb{Z})$ be the semigroup generated by effective curve classes on $X$. We define the \emph{quantum $K$-potential of genus 0} by
\[
\ca{F}(t,Q):=\frac{1}{2}(t,t)+\sum_{k=0}^\infty\sum_{\beta\in E}\frac{Q^\beta}{k!}\langle t,\dots, t\rangle_{0,k,\beta},
\]
where $t\in K(X)_{\bb{Q}}:=K(X)\otimes{\bb{Q}}$ and $(t,t):=\chi(t\otimes t)$ is the Mukai pairing. 
Let $\phi_0=\ca{O}_X,\phi_1,\phi_2\dots$ be a basis of $K(X)_{\bb{Q}}$. One can define the ``quantized'' pairing on $K(X)_{\bb{Q}}$ by
\[
((\phi_i, \phi_j)):=\ca{F}_{ij}=\partial_{t_i}\partial_{t_j}\ca{F}(t,Q).
\]
It is showed in \cite{lee1} that quantum $K$-theory satisfies all the usual axioms of cohomological Gromov-Witten theory except the flat identity axiom. We refer the reader to \cite{lee1} for more details. 

In the following discussion, we assume $X$ can be represented as a GIT quotient $Z\sslash G$. As mentioned before, the moduli space of stable maps $\overline{\ca{M}}_{g,k}(Z\sslash G,\beta)$ can be identified with $\ca{Q}^\epsilon_{g,k}(Z\sslash G,\beta)$ for large $\epsilon$. According to \cite{ciocan4},  for general $\epsilon$, the $\epsilon$-stable quasimap space $\ca{Q}^\epsilon_{g,k}(Z\sslash G,\beta)$ is proper and it admits a two-term perfect obstruction theory, assuming $Z$ has only lci singularities. Hence by the result in \cite{lee1}, one can construct a virtual structure sheaf $\ca{O}^{\text{vir}}$ on $\ca{Q}^\epsilon_{g,k}(Z\sslash G,\beta)$.
\begin{definition}
The \emph{$K$-theoretic quasimap invariants of level $l$} are defined by
\[
\big\langle E_1L_1^{l_1},\dots, E_kL_k^{l_k}\big\rangle_{g,k,\beta}^{Z\sslash G,R,l,\epsilon}=\chi\big(\ca{Q}^\epsilon_{g,k}(Z\sslash G,\beta),\prod_i\text{ev}_i^* E_i\otimes L_i^{l_i}\otimes\ca{O}^{\text{vir}}\otimes \ca{D}^{R,l}\big)\in\bb{Z},
\]where $E_i\in K(Z\sslash G)\otimes\bb{Q}$. 
\end{definition}
We shall usually suppress $Z\sslash G$ from the notation if there is no confusion. Note that these invariants are all integers.

\subsection{Quasimap graph space and $\ca{J}^{R,l,\epsilon}$-function}\label{quasigraph}
In this subsection, we first recall the definition and properties of the $\epsilon$-stable quasimap graph space. Then we define an important generating series $\ca{J}^{R,l,\epsilon}$ of $K$-theoretic quasimap invariants of level $l$. 

Given a rational number $\epsilon>0$, the \emph{quasimap graph space}, denoted by $\ca{QG}_{g,k}^\epsilon(Z\sslash G,\beta)$, is introduced in \cite{ciocan4}. It is the moduli space of the tuples
\[
((C,x_1,\dots x_k),P,u,\varphi),
\]
where $((C,x_1,\dots x_k),P,u)$ is a prestable quasimap, satisfying $\epsilon l(x)<1$ for every point $x$ on $C$, and the new data $\varphi$ is a degree 1 morphism from $C$ to $\bb{P}^1$. The curve $C$ has a unique rational component $C_0$ such that $\varphi|_{C_0}:C_0\rightarrow\bb{P}^1$ is an isomorphism and the complement $C/ C_0$ is contracted by $\varphi$. The ampleness condition imposed on the tuples is modified to:
\[
\omega_{\overline{C\setminus C_0}}(\sum x_i+\sum y_j)\otimes \ca{L}_\theta^\epsilon \text{ is ample}, 
\]
where $x_i$ are marked points on $\overline{C\setminus C_0}$ and $y_i$ are the nodes $\overline{C\setminus C_0}\cap C_0$. It is shown in \cite{ciocan4} that the quasimap graph space is also a separated Deligne-Mumford stack which is proper over the affine quotient. Moreover, when $Z$ has only lci singularities, the canonical obstruction theory on the graph space is perfect. Similarly, we can define the level-$l$ determinant line bundle $\ca{D}_{\ca{QG}}^{R,l}$ on $\ca{QG}_{g,k}^\epsilon(Z\sslash G,\beta)$ using the universal principal $G$-bundles over its universal curve.

There is a natural $\bb{C}^*$-action on the graph spaces. Let $[x_0,x_1]$ be homogeneous coordinates on $\bb{P}^1$, and set $0:=[1,0]$ and $\infty:=[0,1]$. We consider the standard $\bb{C}^*$-action on $\bb{P}^1$:
\begin{equation}\label{eq:actionst}
t\cdot[x_0,x_1]=[tx_0,x_1],\quad\forall t\in\bb{C}^*.
\end{equation}
It induces an action on the $\epsilon$-stable quasimap graph space $\ca{QG}_{g,k}^\epsilon(Z\sslash G,\beta)$ by rescaling the parametrized rational component. According to \cite[\textsection 4.1]{ciocan1}, the $\bb{C}^*$-fixed locus can be described as
\[
(\ca{QG}_{g,k}^\epsilon(Z\sslash G,\beta))^{\bb{C}^*}=\coprod F^{g_1,k_1,\beta_1}_{g_2,k_2,\beta_2},
\]
where the disjoint union is over all possible splittings
\[
g=g_1+g_2,\quad k=k_1+k_2,\quad \beta=\beta_1+\beta_2,
\]
with $g_i,k_i\geq 0$ and $\beta_i$ effective. In the stable cases, an $\epsilon$-stable parametrized quasimap $((C,x_1,\dots,x_k),P,u,\varphi)\in  F^{g_1,k_1,\beta_1}_{g_2,k_2,\beta_2}$ is obtained by gluing two $\epsilon$-stable quasimaps of types $(g_1,k_1,\beta_1)$ and $(g_2,k_2,\beta_2)$ to a constant map $\bb{P}^1\rightarrow p\in Z\sslash G$ at $0$ and $\infty$, respectively. Therefore, the component $ F^{g_1,k_1,\beta_1}_{g_2,k_2,\beta_2}$ is isomorphic to the fiber product
\[
\ca{Q}_{g_1,k_1+\bullet}^\epsilon(Z\sslash G,\beta_1)\times_{Z\sslash G}\ca{Q}_{g_2,k_2+\bullet}^\epsilon(Z\sslash G,\beta_2)
\]
over the evaluation maps at the special marked points $\bullet$. When one of the components at $0$ or $\infty$ is unstable, we use the following conventions.
\begin{enumerate}
\item For the unstable cases $(g_1,k_1,\beta_1)=(0,0,0)$ or $(0,1,0)$ (and likewise for $(g_2,k_2,\beta_2)$), we define
\[
\ca{Q}_{0,0+\bullet}^\epsilon(Z\sslash G,0):=Z\sslash G,\quad\ca{Q}_{0,1+\bullet}^\epsilon(Z\sslash G,0):=Z\sslash G,\quad\text{ev}_\bullet =\text{Id}_{Z\sslash G}.
\]
\item For the unstable cases $(g_1,k_1,\beta_1)=(0,0,\beta_1)$ with $\beta_1\neq0$ and $\epsilon\leq\frac{1}{\beta_1(L_\theta)}$, we denote by
\[
\ca{Q}_{0,0+\bullet}(Z\sslash G,\beta_1)_0
\]
the moduli space of quasimaps $(C=\bb{P}^1,P,u)$ such that $u(x)\in P\times_GZ^s$ for $x\neq0\in\bb{P}^1$ and $0\in\bb{P}^1$ is a base point of length $\beta_1(L_\theta)$. Similarly, we define $
\ca{Q}_{0,0+\bullet}(Z\sslash G,\beta_2)_\infty$ to be the moduli space of quasimaps whose only base point is of length $\beta_2(L_\theta)$ and located at $\infty$. Using these definitions we have
\[
F^{g,k,\beta_1}_{0,0,\beta_2}\cong\ca{Q}_{g,k+\bullet}^\epsilon(Z\sslash G,\beta_1)\times_{Z\sslash G}\ca{Q}_{0,0+\bullet}(Z\sslash G,\beta_2)_\infty
\]
for $k\geq 1$ and $\epsilon\leq \frac{1}{\beta_2(L_\theta)}$. Similarly we have 
\[
F_{g,k,\beta_2}^{0,0,\beta_1}\cong\ca{Q}_{0,0+\bullet}(Z\sslash G,\beta_1)_0\times_{Z\sslash G}\ca{Q}_{g,k+\bullet}^\epsilon(Z\sslash G,\beta_2)
\]
for $k\geq 1$ and $\epsilon\leq \frac{1}{\beta_1(L_\theta)}$. 
When $g=k=0$ and $\epsilon\leq\text{min}\{ \frac{1}{\beta_1(L_\theta)}, \frac{1}{\beta_2(L_\theta)}\}$, we have
\[
F_{0,0,\beta_2}^{0,0,\beta_1}\cong\ca{Q}_{0,0+\bullet}(Z\sslash G,\beta_1)_0\times_{Z\sslash G}\ca{Q}_{0,0+\bullet}(Z\sslash G,\beta_2)_\infty.
\]
\end{enumerate}

We denote by $\ca{R}$ the vector bundle $Z\times_G R\rightarrow [Z/G]$ and its restriction to $Z\sslash G$. We define the twisted pairing on $K(Z\sslash G)_\bb{Q}$ by
\begin{equation}\label{eq:twistedpairing}
(u,v)^{R,l}:=\chi\big(u\otimes v\otimes(\text{det}\,\ca{R})^{-l}\big),\quad\text{where}\ u,v\in K(Z\sslash G)_\bb{Q}.
\end{equation}
Let $\{\phi_a\}$ be a basis of $K(Z\sslash G)_\bb{Q}$ and let $\{\phi^a\}$ be the dual basis with respect to the above twisted pairing $(\cdot,\cdot)^{R,l}$. Let $t=\sum_it^i\phi_i\in K(Z\sslash G)_\bb{Q}$. We define the \emph{$\ca{J}^{R,l,\epsilon}$-function of level $l$} to be
\begin{equation}\label{eq:jfun}
\ca{J}^{R,l,\epsilon}(t,Q)=1-q+t+\sum_a\sum_{(k,\beta)\neq(0,0),(1,0)}\frac{Q^\beta}{k!}\phi^a\bigg\langle\frac{\phi_a}{1-qL},t,\dots,t\bigg\rangle_{0,k+1,\beta}^{R,l,\epsilon}.\footnote{In Ciocan-Fontanine-Kim's convention, the $J$-function starts at 1. The definition given here agrees with Givental's convention in which the $J$-function starts at the dilaton shift $1-q$. }
\end{equation}
In the above summation, the quasimap moduli spaces are empty when $k=0,\beta\neq0,\beta(L_\theta)\leq1/\epsilon$, and the unstable terms are defined by $\bb{C}^*$-localization on the graph space $\ca{QG}^\epsilon_{0,0}(Z\sslash G,\beta)$. To be more precise, we consider the fixed point locus $F_{0,\beta}:=\ca{Q}_{0,0+\bullet}(Z\sslash G,\beta)_0$ of the $\bb{C}^*$-action. The unstable terms in (\ref{eq:jfun}) are defined to be
\[
(1-q)\sum_a\sum_{\beta\neq0,\,\beta(L_\theta)\leq 1/\epsilon}Q^\beta\chi\bigg(F_{0,\beta},\ca{O}_{F_{0,\beta}}^{\text{vir}}\otimes \text{ev}^*(\phi_a)\otimes\bigg(\frac{\text{tr}_{\bb{C}^*}\ca{D}^{R,l}}{\text{tr}_{\bb{C}^*}\wedge^* N_{F_{0,\beta}}^\vee}\bigg)\bigg)\phi^a.
\]
where $N_{F_{0,\beta}}$ is the \emph{virtual} normal bundle of the fixed locus $F_{0,\beta}$ in $\ca{QG}^\epsilon_{0,0}(Z\sslash G,\beta)$ and $\wedge^* N_{F_{0,\beta}}^\vee:=\sum_i(-1)^i\wedge^iN_{F_0}^\vee$ is the \emph{$K$-theoretic Euler class} of $N_{F_{0,\beta}}$. Here the trace of a $\bb{C}^*$-equivariant bundle $V$, when restricted to the fixed point locus, is a virtual bundle defined by the eigenspace decomposition with respect to the $\bb{C}^*$-action, i.e., we have 
\[
\text{tr}_{\bb{C}^*}(V):=\sum_iq^i\,V(i),
\]
where $t\in\bb{C}^*$ acts on $V(i)$ as multiplication by $t^i$.

For $1<\epsilon\leq\infty$, i.e., the $(\epsilon=\infty)$-theory, $(\ref{eq:jfun})$ defines $\ca{J}$-function in the quantum $K$-theory of level $l$. In this case, we use $\langle\cdot\rangle^{R,l,\infty}$ or simply $\langle\cdot\rangle^{R,l}$ to denote quantum $K$-invariants of level $l$. Following Givental-Tonita \cite{givental2}, we introduce the \emph{symplectic loop space formalism}.
Recall that $E$ denotes the semigroup of $L_\theta$-effective curve classes on $Z\sslash_\theta G$. The \emph{Novikov ring} $\bb{C}[[Q]]$ is defined as
 \[
 \bb{C}[[Q]]:=\overline{\{\sum_{\beta\in E}c_{\beta}Q^\beta|c_\beta\in\bb{C}\} }.
 \]
Here the completion is taken with respect to the $\mathfrak{m}$-adic topology, where $\mathfrak{m}$ denotes the maximal ideal generated by nonzero elements of $E$.
We define the \emph{loop space} as
\[
\ca{K}:=[K(Z\sslash G)\otimes\bb{C}(q)]\otimes\bb{C}[[Q]],
\]
where $\bb{C}(q)$ is the field of complex rational functions in $q$. By viewing the elements in $\bb{C}(q)\otimes\bb{C}[[Q]]$ as the coefficients, we extend the twisted pairing $(\cdot,\cdot)^{R,l}$ to $\ca{K}$ via linearity. There is a natural symplectic form $\Omega$ on $\ca{K}$ defined by 
 \begin{equation}\label{eq:sympform}
\Omega(f,g):=[\text{Res}_{q=0}+\text{Res}_{q=\infty}](f(q),g(q^{-1}))^{R,l}\frac{dq}{q},\quad\text{where}\ f,q\in\ca{K}.
\end{equation}
With respect to $\Omega$, there is a \emph{Lagrangian polarization} $\ca{K}=\ca{K}_+\oplus\ca{K}_-$, where
\[
\ca{K}_+=[K(Z\sslash G)\otimes\bb{C}[q,q^{-1}]]\otimes\bb{C}[[Q]]\quad\text{and}\quad \ca{K}_-=\{f\in\ca{K}|f(0)\neq\infty,f(\infty)=0\}.
\]
As before, let $\{\phi_a\}$ be a basis of $K(Z\sslash G)_\bb{Q}$ and let $\{\phi^a\}$ be the dual basis with respect to the twisted pairing $(\cdot,\cdot)^{R,l}$. Let $\mb{t}(q)=\sum_{i,j}t^i_j\phi_iq^j\in\ca{K}_+$ be an arbitrary Laurent polynomial. We define the \emph{big $\ca{J}$-function of level $l$} to be the function $\ca{J}^{R,l}(\mb{t}(q),Q):\ca{K}_+\rightarrow\ca{K}$ given by
\[
\ca{J}^{R,l}(\mb{t}(q),Q)=1-q+\mb{t}(q)+\sum_a\sum_{(k,\beta)\neq(0,0),(1,0)}\frac{Q^\beta}{k!}\phi^a\bigg\langle\frac{\phi_a}{1-qL},\mb{t}(L),\dots,\mb{t}(L)\bigg\rangle_{0,k+1,\beta}^{R,l,\infty}.
\]
Define the \emph{genus-0 $K$-theoretic descendant potential of level $l$} by
\begin{equation}\label{eq:potential}
\ca{F}^{R,l}(\mb{t},Q):=\sum_{k,\beta}\frac{Q^\beta}{k!}\langle \mb{t}(L),\dots, \mb{t}(L)\rangle_{0,k,\beta}^{R,l,\infty}.
\end{equation}
We can identify the cotangent bundle $T^*\ca{K}_+$ with the symplectic loop space $\ca{K}$ via the Lagrangian polarization and the \emph{dilaton shift} $f\rightarrow f+(1-q)$. Then $\ca{J}^{R,l}$ coincides with the differential of the descendant potential up to the dilaton shift, i.e., we have
\[
\ca{J}^{R,l}=1-q+\mb{t}(q)+d_t\ca{F}^{R,l}(\mb{t},Q).
\]
In the case $l=0$, the above fact is proved in \cite[\textsection{2}]{givental2}. The same argument works for arbitrary $l$. 

For $(\epsilon=0+)$-stable quasimap theory, the definition (\ref{eq:jfun}) gives the $\ca{I}$-function of level $l$ of $Z\sslash G$:
\begin{align*}
\ca{I}^{R,l}(t,Q)&:=\ca{J}^{R,l,0+}(t,Q)/(1-q)\\
&=1+\frac{t}{1-q}+\sum_a\sum_{\beta\neq0}Q^\beta\chi\bigg(F_{0,\beta},\ca{O}_{F_{0,\beta}}^{\text{vir}}\otimes \text{ev}^*(\phi_a)\otimes\bigg(\frac{\text{tr}_{\bb{C}^*}\ca{D}^{R,l}}{\text{tr}_{\bb{C}^*}\wedge^* N_{F_{0,\beta}}^\vee}\bigg)\bigg)\phi^a
\\
&+\sum_a\sum_{k\geq 1,(k,\beta)\neq(1,0)}\frac{Q^\beta}{k!}\phi^a\bigg\langle\frac{\phi_a}{(1-q)(1-qL)},t,\dots,t\bigg\rangle_{0,k+1,\beta}^{R,l,\epsilon=0+},
\end{align*}
where $t\in K(Z\sslash G)_\bb{Q}$.

\subsection{The permutation-equivariant quasimap $K$-theory with level structure}\label{perm}
Givental \cite{givental11,givental12,givental13,givental14,givental15,givental16,givental17,givental18,givental19,givental21,givental22} introduced the \emph{permutation-equivariant} quantum $K$-theory, which takes into account the $S_n$-action on the moduli spaces of stable maps by permuting the marked points. The definition can be easily generalized to incorporate the level structure.

Let $\Lambda$ be a $\lambda$-algebra, i.e. an algebra over $\bb{Q}$ equipped with abstract Adams operations $\Psi^k,k=1,2,\dots$. Here $\Psi^k:\Lambda\rightarrow\Lambda$ are ring homomorphisms which satisfy $\Psi^r\Psi^s=\Psi^{rs}$ and $\Psi^1=\text{id}$. We often assume that $\Lambda$ includes the Novikov variables, the algebra of symmetric polynomials in a given number of variables, and the torus equivariant $K$-ring of a point. We also assume that $\Lambda$ has a maximal ideal $\Lambda_+$ and is equipped with the $\Lambda_+$-adic topology. For example, we can choose 
\[
\Lambda=\bb{Q}[[N_1,N_2,\dots]][[Q]][\Lambda_0^\pm,\dots,\Lambda_N^\pm],
\]
where $N_i$ are the Newton polynomials (in infinitely or finitely many variables) and $Q$ denotes the Novikov variable(s). The parameters $\Lambda_i$ denote the torus-equivariant parameters. The Adams operations $\Psi^r$ act on $N_m$ and $Q$ by $\Psi^r(N_m)=N_{rm}$ and $\Psi^r(Q^\beta)=Q^{r\beta}$, respectively. We assume their actions on the torus-equivariant parameters are trivial.

Similar to the ``ordinary" quasimap $K$-theory with level structure, we define the loop space by
\[
\ca{K}:=[K(Z\sslash G)\otimes\Lambda]\otimes\bb{C}(q).
\]
As before, it is equipped with a symplectic form defined by (\ref{eq:sympform}), and it has a Lagrangian polarization \[
\ca{K}=\ca{K}_+\oplus\ca{K}_-,
\]
where $\ca{K}_+$ is the subspace of Laurent polynomials in $q$ and $\ca{K}_-$ is the subspace of reduced rational functions which are regular at $q=0$ and vanish at $q=\infty$.

Consider the natural $S_k$-action on the quasimap moduli space $\ca{Q}^\epsilon_{g,k}(Z\sslash G,\beta)$ by permuting the $k$ marked points. Notice that the virtual structure sheaf $\ca{O}_{\ca{Q}^\epsilon_{g,k}(Z\sslash G,\beta)}$ and the determinant line bundle $\ca{D}^{R,l}$ are invariant under this action. Therefore we have the following $S_k$-module
\[
\big[\mb{t}(L),\dots,\mb{t}(L)\big]_{g,k,\beta}:=\sum_m(-1)^mH^m\big(\ca{O}_{\ca{Q}^\epsilon_{g,k}(Z\sslash G,\beta)}^{\text{vir}}\otimes \ca{D}^{R,l}\otimes_{i=1}^k\mb{t}(L_i)\big),
\]
for $\mb{t}(q)\in\ca{K}_+$.
\begin{definition}\label{definquasimap}
The correlators of the permutation-equivariant quasimap $K$-theory of level $l$ are defined by
\[
\big\langle \mb{t}(L),\dots,\mb{t}(L)\big\rangle_{g,k,\beta}^{R,l,\epsilon,S_k}:=\pi_*\big(\ca{O}_{\ca{Q}^\epsilon_{g,k}(Z\sslash G,\beta)}^{\text{vir}}\otimes \ca{D}^{R,l}\otimes_{i=1}^k \mb{t}(L_i)\big),
\]	
where $\pi_*$ is the $K$-theoretic pushforward along the projection
\[
\pi:\big[\ca{Q}^\epsilon_{g,k}(Z\sslash G,\beta)/S_k\big]\rightarrow[\text{pt}].
\]
\end{definition}
\begin{remark}\label{symmetrized}
When the $\lambda$-algebra $\Lambda$ is chosen to be $\bb{Z}[[Q]]$, we refer to the invariants as the \emph{symmetrized} invariants. The pushforward map in Definition \ref{definquasimap} carries information only about the dimensions of the $S_k$-invariant parts of sheaf cohomology $\big[\mb{t}(L),\dots,\mb{t}(L)\big]_{g,k,\beta}$. We refer to \cite[Example 4]{givental11} for more details.
\end{remark}
For the permutation-equivariant quasimap $K$-theory, we also consider the $J^\epsilon$-function, and define the cone $\ca{L}_{S_\infty}$ to be the range of the $J^\infty$-function.
\begin{definition}
The permutation-equivariant $K$-theoretic $J^\epsilon$-function of $Z\sslash G$ of level $l$ is defined by
\begin{equation}\label{eq:permjfun}
\ca{J}^{R,l,\epsilon}_{S_\infty}(\mb{t}(q),Q):=1-q+\mb{t}(q)+\sum_a\sum_{(k,\beta)\neq(0,0),(1,0)}Q^\beta\bigg\langle\frac{\phi_a}{1-qL},\mb{t}(L),\dots,\mb{t}(L)\bigg\rangle_{0,k+1,\beta}^{R,l,\epsilon,S_k}\ \phi^a,
\end{equation}
 where the unstable terms in the summation are the same as those in (\ref{eq:jfun}).
\end{definition}
Note that in the above definition of permutation-equivariant $J$-function, we do not need to divide each term by $k!$.

\begin{definition}
We define the Givental's cone $\ca{L}^{R,l}_{S_\infty}$ as the range of $\ca{J}_{S_\infty}^{R,l,\infty}$, i.e., 
\[
\ca{L}^{R,l}_{S_\infty}:=\bigcup_{\mb{t}(q)\in\ca{K}_+}\ca{J}_{S_\infty}^{R,l,\infty}(\mb{t}(q),Q)\subset\ca{K}.
\]
\end{definition}
\begin{remark}
In the ordinary, i.e. permutation-\emph{non}-equivariant, quantum $K$-theory, the range of the $\ca{J}$-function is a cone which coincides with the differential of the descendant potential (up to the dilaton shift). Therefore the range of the ordinary $K$-theoretic $\ca{J}$-function is a Lagrangian cone in the loop space $\ca{K}$. However, in the permutation-equivariant theory, it is explained in \cite{givental17} that the cone $\ca{L}_{S_\infty}^{R,l}$ is not Lagrangian.
\end{remark}

\subsection{The level structure in equivariant quasimap theory and orbifold quasimap theory}\label{orbthy}
When $Z\sslash G$ is not proper, one can still define the \emph{equivariant} quasimap invariants if $Z\sslash G$ has an additional torus action such that the the fixed point loci in the quasimap moduli spaces are proper. It is explained in \cite[\textsection 6.3] {ciocan4} how to define cohomological quasimap invariants via virtual localization. Similarly, one can define equivariant $K$-theoretic quasimap invariants (with level structure) for noncompact GIT targets using the $K$-theoretic virtual localization formula (see \cite[\textsection{3.2}]{qu1}). With this understood, we define the $\ca{J}^{R,l,\epsilon}$-function of equivariant $K$-theoretic quasimap invariants of level $l$ using (\ref{eq:jfun}). Its permutation-equivariant generalization is straightforward.

If we do not assume $G$ acts freely on the stable locus $Z^s$, then the target $X:=[Z^s/G]$ is naturally an orbifold. For such orbifold GIT targets, a quasimap is a tuple $((C,x_1,\dots,x_k),[u])$ where $(C,x_1,\dots,x_k)$ is a $k$-pointed, genus $g$ twisted curve (see \cite[\textsection 4]{abramovich2}) and $[u]$ is a representable morphism from $(C,x_1,\dots,x_k)$ to $X$. We refer the reader to \cite[\textsection 2.3]{ciocan2} for the details of the $\epsilon$-stability imposed on those tuples. We denote by $\ca{Q}^\epsilon_{g,k}(X,\beta)$ the moduli stack of $\epsilon$-stable quasimaps to the orbifold $X$. It is shown in \cite[Thm. 2.7]{ciocan2} that this moduli stack is Deligne-Mumford and proper over the affine quotient. Furthermore if $Z$ only has l.c.i. singularties, then $\ca{Q}^\epsilon_{g,k}(X,\beta)$ has a canonical perfect obstruction theory. Let $\ca{C}\rightarrow \ca{Q}^\epsilon_{g,k}(X,\beta)$ be the universal curve. The universal principal $G$-bundle $\ca{P}\rightarrow\ca{C}$ is defined as the pullback of the principal $G$-bundle $Z\rightarrow[Z/G]$ via the universal morphism $[u]:\ca{C}\rightarrow[Z/G]$. In the orbifold setting, we can still define the level-$l$ determinant line bundle $\ca{D}^{R,l}$ using (\ref{eq:maindef}). 

According to \cite[\textsection 2.5.1]{ciocan2}, there are natural evaluation morphisms
\[
\text{ev}_i:\ca{Q}^\epsilon_{g,k}(X,\beta)\rightarrow \bar{I}_\mu X,\ \big((C,x_1,\dots,x_k),[u]\big)\mapsto[u]_{|_{x_i}},\quad\text{for}\ i=1,\dots,k.
\]
Here $\bar{I}_\mu X$ denotes the rigidified cyclotomic inertia stack of $X$ which parameterizes representable maps from gerbes banded by finite cyclic groups to $X$. Let $L_i$ be the universal cotangent line bundle whose fiber at $((C,x_1\dots,x_k),[u])$ is the cotangent space of the coarse curve $\underline{C}$ of $C$ at the $i$-th marked point $\underline{x_i}$. For non-negative integers $l_i$ and classes $E_i\in K^0(\bar{I}_\mu X)\otimes\bb{Q}$, we define the $K$-theoretic quasimap invariants of level $l$ as
\[
\big\langle E_1L_1^{l_1},\dots, E_kL_k^{l_k}\big\rangle_{g,k,\beta}^{X,R,l,\epsilon}=\chi\big(\ca{Q}^\epsilon_{g,k}(X,\beta),\prod_i\text{ev}_i^* E_i\otimes L_i^{l_i}\otimes\ca{O}^{\text{vir}}\otimes \ca{D}^{R,l}\big).
\]
When $\epsilon=\infty$ and $l=0$, this definition recovers the $K$-theoretic Gromov-Witten invariants of $X$ defined in \cite{tonita2}.

In the orbifold setting, one can still define the quasimap graph space $\ca{QG}^\epsilon_{g,k}(X,\beta)$ (see \cite[\textsection 2.5.3]{ciocan2}). The definition of the determinant line bundle $\ca{D}^{R,l}$ over the graph space is straightforward. We choose a basis $\{\phi_a\}$ of $K^0(\bar{I}_\mu X)\otimes\bb{Q}$. Let $\{\phi^a\}$ be the dual basis with respect to the twisted pairing $(\cdot,\cdot)^{R,l}$ on $K^0(\bar{I}_\mu X)\otimes\bb{Q}$ given by 
\[
(u,v)^{R,l}:=\chi\big(\bar{I}_\mu X,u\otimes \bar{\iota}^*v\otimes\text{det}^{-l}(\bar{I}_\mu\ca{R})\big).
\]
Here $\bar{\iota}$ is the involution induced by $(x,g)\mapsto (x,g^{-1})$ and $\bar{I}_\mu\ca{R}$ is a vector bundle over $\bar{I}_\mu X$ such that the fiber over $(x,H)$, with $H\subset\text{Aut}\ x$, is the $H$-fixed subspace of $\ca{R}_x$. With all the notations understood, it is straightforward to adapt the definition of cohomological orbifold quasimap $\ca{J}^\epsilon$-function \cite[Def. 3.1]{ciocan2} to the (permutation-equivariant) $K$-theoretic setting.

\section{Adelic Characterization in quantum $K$-theory with level structure}\label{conechar}
In this section, we focus on quantum $K$-theory, i.e., $(\epsilon=\infty)$-quasimap theory. We first recall the (virtual) Lefschetz-Kawasaki's Riemann-Roch formula. This is the main tool in analyzing the poles of the $J$-function. We give an adelic characterization of points on the cone $\ca{L}_{S_\infty}^{R,l}$ in Theorem \ref{levelladele}. As an application of the adelic characterization, we prove that certain ``determinantal '' modifications of points on the cone $\ca{L}_{S_\infty}$ of level $0$ lie on the cone $\ca{L}_{S_\infty}^{R,l}$ of level $l$. This result will be used in the proof of the toric mirror theorem in Section \ref{mirrorandmock}. We assume in this section that $X$ is a smooth projective variety which is not necessarily a GIT quotient. The determinant line bundle $\ca{D}^{R,l}$ is defined as in Remark \ref{notrep}.

\subsection{Virtual Lefschetz-Kawasaki's Riemann-Roch formula}\label{kloc}
To understand the poles of the generating series of the permutation-equivariant quantum $K$-invariants, we recall the \emph{Lefschetz-Kawasaki's Riemann-Roch formula} in \cite{givental19}. 

Let $h$ be a finite order automorphism of a holomorphic orbibundle $E$ over a compact smooth orbifold $\ca{M}$. The (super)trace of $h$ on the sheaf cohomology $H^*(\ca{M},E)$ can be computed as an integral over the $h$-fixed point locus $I\ca{M}^h$ in the inertia orbifold $I\ca{M}$:
\begin{equation}\label{eq:krr}
\text{tr}_h\,H^*(\ca{M},E)=\chi^{fake}\bigg(I\ca{M}^h,\frac{\text{tr}_{\tilde{h}}\,E}{\text{tr}_{\tilde{h}}\,\wedge^* N_{I\ca{M}^h}^\vee}\bigg):=\int_{[I\ca{M}^h]}\text{td}(T_{I\ca{M}^h})\,\text{ch}\bigg(\frac{\text{tr}_{\tilde{h}}\,E}{\text{tr}_{\tilde{h}}\,\wedge^* N_{I\ca{M}^h}^\vee}\bigg).
\end{equation}
We explain the ingredients of this formula as follows. By definition, we can choose an atlas of local charts $U\rightarrow U/G(x)$ of $\ca{M}$. The local description of the inertia orbifold $I\ca{M}$ near $x\in\ca{M}$ is given by $[\coprod_{g\in G(x)}U^g/G(x)]$, where $U^g\subset U$ denotes the fixed point locus of $g$.
The automorphism $h$ can be lifted to an automorphism $\tilde{h}$ of the chart $U^g$. We denote by $(U^g)^{\tilde{h}}$ the fixed point locus of $\tilde{h}$ in $U^g$. Then the local description of the orbifold $I\ca{M}^h$ is given by $[\coprod_{g}(U^g)^{\tilde{h}}/G(x)]$. We refer to the connected components of $I\ca{M}^h$ as \emph{Kawasaki strata}.
Near a point $(x,[g])\in I\ca{M}^h$, the tangent and normal orbiford bundles $T_{I\ca{M}^h}$ and $N_{I\ca{M}^h}$ are identified with the tangent bundle and normal bundle to $(U^g)^{\tilde{h}}$ in $U$, respectively. In the denominator of the right side of (\ref{eq:krr}), $\wedge^* N_{I\ca{M}^h}^\vee:=\sum_{i\geq0}(-1)^i\wedge^iN_{I\ca{M}^h}^\vee$ is the K-theoretic Euler class of the normal bundle $N_{I\ca{M}^h}$. The trace bundle $\text{tr}_{\tilde{h}}\,F$ is the virtual orbifold bundle:
\[
\text{tr}_{\tilde{h}}\,F:=\sum_\lambda\lambda\, F_\lambda,
\]
where $F_\lambda$ are the eigen-bundles of $h$ corresponding to the eigenvalues $\lambda$.
Finally, td and ch denote the \emph{Todd class} and \emph{Chern character}. 

By choosing $h$ to be the identity map, we obtain the \emph{Kawasaki's Riemann-Roch formula} \cite{kawasaki} from (\ref{eq:krr}):
\begin{equation}\label{eq:orkrr}
\chi(\ca{M},E)=\chi^{fake}\bigg(I\ca{M},\frac{\text{tr}_g\,E}{\text{tr}_g\,\wedge^* N_{I\ca{M}}^\vee}\bigg).
\end{equation}
When $\ca{M}$ is no longer smooth, Tonita \cite{tonita1} proved a \emph{virtual} Kawasaki's formula: under the assumption that $\ca{M}$ has a perfect obstruction theory and admits an embedding into a smooth orbifold which has the resolution property, Kawasaki's formula still holds true if we replace the structure sheaves, tangent, and normal bundles in the formula by their virtual counterparts.
According to \cite{abramovich3}, the moduli stacks of stable maps to smooth projective varieties satisfy the assumptions of Tonita's theorem. In the next subsection, we apply the virtual Kawasaki's Riemann-Roch (KRR) formula to $\overline{\ca{M}}_{g,k}(X,\beta)/S_k$ to study the poles of the $J$-function.

\subsection{Adelic characterization}
In this subsection, we first recall the adelic characterization \cite{givental13} of the cone $\ca{L}_{S_\infty}$ in the level-0 permutation-equivariant quantum $K$-theory. Then we generalize it to describe points on the cone $\ca{L}_{S_\infty}^{R,l}$ of level $l$.

\subsubsection{The level-0 case}
In the level-0 case, i.e., Givental-Lee's quantum $K$-theory, the permutation-equivariant invariants are defined as
\[
\langle \mb{t}(L),\dots,\mb{t}(L)\rangle_{g,k,\beta}^{S_k}:=\chi\big(\overline{\ca{M}}_{g,k}(X,\beta)/S_k,\ca{O}_{\overline{\ca{M}}_{g,k}(X,\beta)}^{\text{vir}}\otimes_{i=1}^k \mb{t}(L_i)\big),
\]
where $\mb{t}(q)$ is a Laurent polynomial in $q$ with coefficients in $K^0(X)\otimes\bb{Q}$. This is a special case of Definition \ref{definquasimap} with $\epsilon$ sufficiently large and $l=0$. The $J$-function is defined as
\[\ca{J}_{S_\infty}(\mb{t}(q),Q):=1-q+\mb{t}(q)+\sum_a\sum_{(k,\beta)\neq(0,0),(1,0)}Q^\beta\bigg\langle\frac{\phi_a}{1-qL},\mb{t}(L),\dots,\mb{t}(L)\bigg\rangle_{0,k+1,\beta}^{S_k}\ \phi^a.
\]
Here $\{\phi_a\}$ and $\{\phi^a\}$ are basis of $K^0(X)\otimes\bb{Q}$ dual with respect to the Mukai pairing
\[
(\phi_a,\phi_b):=\chi(\phi_a\otimes \phi_b).
\]
Recall the definition of the loop space $\ca{K}$ from Section \ref{quasigraph}:
\[\ca{K}:=[K^0(X)\otimes\bb{C}(q)]\otimes\bb{C}[[Q]].\]
With respect to the symplectic form (\ref{eq:sympform}) with $l=0$, there is a Lagrangian polarization $\ca{K}=\ca{K}_+\oplus\ca{K}_-$, where $\ca{K}_+$ consists of Laurent polynomials in $q$ and $\ca{K}_-$ consists of reduced rational functions in $q$. The Givental's cone $\ca{L}_{S_\infty}\subset\ca{K}$ is defined as the image of $\ca{J}_{S_\infty}:\ca{K}_+\rightarrow\ca{K}$.

To study the poles of the series $\ca{J}_{S_\infty}(q)$, we apply the virtual KRR formula to the stack $\overline{\ca{M}}_{g,k+1}(X,\beta)/S_k$, where the symmetric group acts on the last $k$ markings. By the virtual KRR formula, each term in the $J$-function can be written as a summations of fake Euler characteristics over the Kawasaki strata. Note that the Kawasaki strata parametrize stable maps with prescribed automorphisms, i.e., equivalence classes of pairs $(C,f,h)$, where $(C,f)$ is a stable map to $X$ and $h$ is an automorphism of the map. Here, $h$ is allowed to permute the last $k$ markings, but it has to preserve the first marking (with the insertion $1/(1-qL)$). Denote by $\eta$ the eigenvalue of $h$ on the \emph{cotangent} line to the curve at the first marking. 

There are two types of Kawasaki strata. Over the Kawasaki strata with $\eta=1$, the input $\text{tr}_h(1/(1-qL))$ in the fake Euler characteristics becomes $1/(1-q\bar{L})$, where $1-\bar{L}$ is nilpotent. From the finite expansion
\[
\frac{1}{1-q\bar{L}}=\sum_{i\geq 0}\frac{q^i(\bar{L}-1)^i}{(1-q)^{i+1}},
\]
we see that the contributions to the $J$-function from the Kawasaki strata with $\eta=1$ have poles at $q=1$.

Over the Kawasaki strata where $\eta\neq 1$ is a primitive $m$-th root of unity, the insertion $\text{tr}_h(1/(1-qL))$ in the fake Euler characteristics becomes $1/(1-q\eta\bar{L})$, where $1-\bar{L}$ is nilpotent. By considering the finite expansion
\[
\frac{1}{1-q\eta \bar{L}}=\sum_{i\geq 0}\frac{(q\eta)^i(\bar{L}-1)^i}{(1-q\eta)^{i+1}},
\]
we see that they contribute terms with possible poles at the root of unity $\eta^{-1}$ to the $J$-function. We refer the reader to \cite{givental13} for a nice diagram cataloging all the strata.

For each primitive $m$-th root of unity $\eta$, we denote by $\ca{J}_{S_\infty}(\mb{t})_{(\eta)}$ the Laurent expansion of the $J$-function in $1-q\eta$ and regard it as an element in the loop space of power $Q$-series with vector Laurent series in $1-q\eta$ as coefficients:
\[\ca{K}^\eta:=K^0(X)\bigg[\frac{1}{1-q\eta},1-q\eta\bigg]\bigg]\otimes\bb{C}[[Q]].\]

The contributions from the untwisted sector of $\overline{\ca{M}}_{g,k}(X,\beta)/S_k$ in the virtual KRR formula are called the \emph{fake} $K$-theoretic Gromov-Witten (GW) invariants. 
More precisely, they are defined by
\[
\langle \mb{t}(L),\dots,\mb{t}(L)\rangle_{0,k,\beta}^{fake}:=\int_{[\overline{\ca{M}}_{0,k}(X,\beta)]^\vir}\prod_{i=1}^k \text{ch}(\mb{t}(L_i))\,\text{Td}(T^\vir),
\]
where $[\overline{\ca{M}}_{g,k}(X,\beta)]^\vir$ is the virtual fundamental class of the moduli space and $T^\vir$ is the virtual tangent bundle of $\overline{\ca{M}}_{g,k}(X,\beta)$. We define the $J$-function in the fake quantum $K$-theory by
\begin{align*}
&\ca{J}_{fake}:\ca{K}_+^1\rightarrow\ca{K}^1,\\
&\ca{J}_{fake}(\mb{t}(q),Q):=1-q+\mb{t}(q)+\sum_a\sum_{(k,\beta)\neq(0,0),(1,0)}\frac{Q^\beta}{k!}\phi^a\bigg\langle\frac{\phi_a}{1-qL},\mb{t}(L),\dots,\mb{t}(L)\bigg\rangle_{0,k+1,\beta}^{fake}.
\end{align*}
Here the input $\mb{t}(q)$ belongs to 
\[
\ca{K}^1_+:=K^0(X)[[1-q]]\otimes\bb{C}[[Q]].
\]
Denote by $\ca{L}_{fake}\subset\ca{K}^1$ the range of the series $\ca{J}_{fake}$. The negative space $\ca{K}^1_-$ of the polarization is spanned by $\phi^aq^k/(1-q)^{k+1},\,a=1,\dots,\text{dim}\, K^0(X)_\bb{Q},\,k=0,1,\dots.$ The input $\mb{t}(q)$ of $\ca{J}_{fake}$ can be obtained from the projection of $\ca{J}_{fake}$ to $\ca{K}^1_+$ along $\ca{K}^1_-$. 

In \cite{givental13}, Givental gives the following adelic characterization of the values of $\ca{J}_{S_\infty}(\mb{t})$:
\begin{theorem}[\hspace{-0.0001 cm}\cite{givental13}]\label{level0adele}
The values of $\ca{J}_{S_\infty}(\mb{t})$ are characterized by the following requirements:
\begin{enumerate}
\item $\ca{J}_{S_\infty}(\mb{t})$ has possible poles only at 0, $\infty$, and roots of unity;
\item $\ca{J}_{S_\infty}(\mb{t})_{(1)}$ lies on $\ca{L}_{fake}$;
\item for every primitive root of unity $\eta$ of order $m\neq1$,
\begin{align*}
\ca{J}_{S_\infty}(\mb{t})_{(\eta)}&(q^{1/m}/\eta)\in\sqrt{\frac{\lambda_{-1}(T_X)}{\lambda_{-1}(\Psi^mT_X)}}\\
\times\,&\emph{exp}\sum_{i\geq1}\bigg(\frac{\Psi^iT_X^\vee}{i(1-\eta^{-i}q^{i/m})}-\frac{\Psi^{im}T_X^\vee}{i(1-q^{im})}\bigg)
\ca{T}_m\big(\ca{J}_{S_\infty}(\mb{t})_{(1)}\big),
\end{align*}
where $\Psi^m$ is the $m$-th Adams operation on $K^0(X)_\bb{Q}$ which acts on line bundles as $L\mapsto L^m$, $\lambda_{-1}(E):=\sum_i(-1)^i\wedge^iE^\vee$ is the $K$-theoretic Euler class of a vector bundle $E$, and $\ca{T}_m(\mb{f})$ is the space described in Definition \ref{twistedtangent} below.
\end{enumerate}
\end{theorem}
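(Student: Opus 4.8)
The plan is to run the \emph{virtual} Lefschetz--Kawasaki Riemann--Roch formula (\ref{eq:krr})--(\ref{eq:orkrr}) on the quotient stacks $\overline{\ca{M}}_{0,k+1}(X,\beta)/S_k$, where $S_k$ acts by permuting the last $k$ markings and fixes the first (the one carrying the insertion $1/(1-qL)$), and to organize the resulting sum over Kawasaki strata by the eigenvalue $\eta$ of the distinguished automorphism $h$ on the cotangent line $T^*_{p_1}C$. A Kawasaki stratum is an equivalence class $(C,f,h)$ with $(C,f)$ a genus-$0$ stable map to $X$ and $h$ a finite-order automorphism preserving $p_1$; writing $m=\mathrm{ord}(\eta)$, the two cases $\eta=1$ and $\eta\neq1$ reproduce exactly the two families of poles of $\ca{J}_{S_\infty}$ described in the text. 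The necessity direction of the theorem amounts to evaluating the KRR contribution of each family; the sufficiency direction is a rigidity argument. Since the virtual structure sheaf, the cotangent lines $L_i$, and (at level $0$) the determinant line bundle are $S_k$-invariant, this is the permutation-equivariant upgrade of the Givental--Tonita analysis of \cite{givental2}, the novelty being the bookkeeping of $S_k$-representations via the Newton/Adams variables in $\Lambda$.

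For $\eta=1$ I would argue that the full contribution of the $\eta=1$ strata, expanded in $1-q$ and regarded as an element of $\ca{K}^1$, lies on $\ca{L}_{fake}$. The untwisted component contributes $\ca{J}_{fake}$ by definition. A stratum with $h\neq\id$ acting trivially on $T^*_{p_1}C$ fixes the component through $p_1$ pointwise and permutes/rotates the ``branch'' subcurves attached away from $p_1$; using the normalization sequence at the attaching nodes and the multiplicativity of $\lambda_{-1}$ of normal bundles and of $\ca{O}^\vir$ under gluing (the level-$0$ analogue of Proposition \ref{cutedge}), these ghost-automorphism contributions resum into $\ca{J}_{fake}$ evaluated at the input obtained by projecting $\ca{J}_{S_\infty}(\mb{t})_{(1)}$ to $\ca{K}^1_+$ along $\ca{K}^1_-$. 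Hence $\ca{J}_{S_\infty}(\mb{t})_{(1)}\in\ca{L}_{fake}$, which is condition (2).

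For $\eta$ a primitive $m$-th root of unity with $m>1$, the curve $C$ has a distinguished ``stem'' component $C_0\ni p_1$ on which $h$ acts by an order-$m$ rotation, with the rest of $C$ a union of branches glued at the two $h$-fixed points of $C_0$ and permuted by powers of $h$. I would (i) compute the fake Euler characteristic of the stem part of the tangent--obstruction complex --- the deformations of $C_0$, the smoothing parameters at the two special nodes, the Todd-class factor, and the restriction of $\mathrm{ev}^*$-data along $C_0$ --- and show it produces, after the substitution $q\mapsto q^{1/m}/\eta$, precisely the universal factor $\sqrt{\lambda_{-1}(T_X)/\lambda_{-1}(\Psi^m T_X)}\cdot\exp\sum_{i\geq1}\big(\tfrac{\Psi^iT_X^\vee}{i(1-\eta^{-i}q^{i/m})}-\tfrac{\Psi^{im}T_X^\vee}{i(1-q^{im})}\big)$ of condition (3); and (ii) identify the branch sum, taken over all distributions of genus, markings and degree to the branches, with the operator $\ca{T}_m$ of Definition \ref{twistedtangent} applied to $\ca{J}_{S_\infty}(\mb{t})_{(1)}$, the operator $\ca{T}_m$ being exactly the twisted-tangent-space identification along the branches. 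Nilpotence of $1-\bar L$ guarantees that each such contribution is a genuine Laurent series in $1-q\eta$, so the expansion $\ca{J}_{S_\infty}(\mb{t})_{(\eta)}$ is well defined.

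Finally, for the ``characterized by'' assertion, given $f\in\ca{K}$ satisfying (1)--(3), extract its polynomial part $\mb{t}(q)\in\ca{K}_+$ and set $g:=\ca{J}_{S_\infty}(\mb{t})$. By necessity $g$ satisfies (1)--(3) and has polynomial part $\mb{t}$, so $f_{(1)}$ and $g_{(1)}$ both lie on $\ca{L}_{fake}$ with the same projection to $\ca{K}^1_+$; since $\ca{L}_{fake}$ is the graph of the differential of the fake potential over $\ca{K}^1_+$, this forces $f_{(1)}=g_{(1)}$, and then (3) gives $f_{(\eta)}=g_{(\eta)}$ for all $\eta\neq1$. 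Thus $f-g$ is a reduced rational function regular at $0$, $\infty$, and all roots of unity, hence is identically $0$; an induction on Novikov degree makes this rigorous. \textbf{The main obstacle} is step (i)--(ii) in the $\eta\neq1$ case: matching the stem computation to the asserted universal exponential factor and recognizing the branch sum as $\ca{T}_m(\ca{J}_{S_\infty}(\mb{t})_{(1)})$ requires careful control of Todd classes, virtual normal bundles at the stem nodes, the Bernoulli-type contributions from $\Psi^i$, and the $S_k$-equivariant structure simultaneously, which is where essentially all of the technical work lies.
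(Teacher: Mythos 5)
Your proposal is correct and follows essentially the same route as the paper: the paper quotes this theorem from Givental and, in proving its level-$l$ generalization (Theorem \ref{levelladele}), uses exactly your strategy --- virtual Kawasaki Riemann--Roch on $\overline{\ca{M}}_{0,k+1}(X,\beta)/S_k$ organized by the eigenvalue $\eta$ at the first marking, with the $\eta=1$ strata resumming to the fake cone and the $\eta\neq1$ strata giving stem contributions identified with $\ca{T}_m\big(\ca{J}_{S_\infty}(\mb{t})_{(1)}\big)$, followed by the standard Novikov-degree induction for uniqueness. The only difference is one of packaging: the paper carries out your steps (i)--(ii) by identifying stem spaces with moduli of stable maps to $X\times B\bb{Z}_m$ and invoking the (orbifold) quantum Riemann--Roch operators $\Box_\eta\,\Box_m^{-1}$, which is precisely the Todd-class/Bernoulli bookkeeping you flag as the main technical obstacle.
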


We recall the following definition from \cite{tonita3}.
\begin{definition}\label{twistedtangent}
Let $\mb{f}$ be a point on $\ca{L}_{fake}$ and let $T(\mb{f})$ be the tangent space to $\ca{L}_{fake}$ at $\mb{f}$, considered as the image of a map $S(q,Q):\ca{K}^1_+\rightarrow\ca{K}$. We extend the Adams operations from $K^0(X)_\bb{Q}$ to $\ca{K}^1$ by $\Psi^m(q)=q^m$ and $\Psi^m(Q)=Q^m$. Let $\Psi^{\frac{1}{m}}$ be the inverse of $\Psi^{m}$, acting as $q\mapsto q^{1/m}$ and $Q\rightarrow Q^{1/m}$. We define the space $\ca{T}_m(\mb{f})$ to be the image of the conjugate of $S(q,Q)$:
\[
\Psi^m\circ S(q,Q)\circ\Psi^{\frac{1}{m}}:\ca{K}^1_+\rightarrow\ca{K}^1.
\]
\end{definition}
\begin{remark}
As explained in \cite[Remark 5.6]{tonita3}, the explicit operator in condition (3) of Theorem \ref{level0adele} can be written as a composition $\Box_\eta\,\Box_m^{-1}$. The definitions of the operators $\Box_\eta$ and $\Box_m$ are given in Proposition \ref{sumofeffects}.  \end{remark}

\subsubsection{The general case of level $l$}
Let $R$ be a vector bundle over $X$. According to Remark \ref{notrep}, the level structure $\ca{D}^{R,l}$ is defined as
\[
{\ca{D}}^{R,l}=\big(\text{det}\,\ca{R}_{k,\beta}\big)^{-l},
\]
where $\ca{R}_{k,\beta}:=R\pi_*(\text{ev}^*\ca{R})$ is the \emph{index bundle}, $\pi:\ca{C}\rightarrow\overline{\ca{M}}_{g,k}(X,\beta)$ is the universal curve, and $\text{ev}:\ca{C}\rightarrow X$ is the universal evaluation morphism. To state the adelic characterization theorem of the cone $\ca{L}_{S_\infty}^{R,l}$ in the permutation-equivariant quantum $K$-theory with level structure, we introduce the fake invariants of level $l$
\[
\langle \mb{t}(L),\dots,\mb{t}(L)\rangle_{0,k,\beta}^{fake,R,l}:=\int_{[\overline{\ca{M}}_{0,k}(X,\beta)]^\vir}\prod_{i=1}^k \text{ch}(\mb{t}(L_i))\,\text{Td}(T^\vir)\,\text{ch}(\ca{D}^{R,l}),
\]
and the fake $J$-function of level $l$
\[
\ca{J}^{R,l}_{fake}(\mb{t}(q),Q):=1-q+\mb{t}(q)+\sum_a\sum_{(k,\beta)\neq(0,0),(1,0)}\frac{Q^\beta}{k!}\phi^a\bigg\langle\frac{\phi_a}{1-qL},\mb{t}(L),\dots,\mb{t}(L)\bigg\rangle_{0,k+1,\beta}^{fake,R,l}.
\]
Here $\{\phi^a\}$ is the dual basis of $\{\phi_a\}$ with respect to the twisted pairing 
\[
(u,v)^{R,l}=\chi(u\otimes v\otimes(\text{det}\,\ca{R})^{-l}).\]
Denote by $\ca{L}_{fake}^{R,l}\subset\ca{K}^1$ the range of the series $\ca{J}_{fake}^{R,l}$. 
Based on the relationship \cite{givental4} between gravitational descendants and ancestors of fake quantum $K$-theory, one can show that $\ca{L}_{fake}^{R,l}$ is an overruled Lagrangian cone. We refer the reader to \cite[\textsection{3}]{givental2} for more details.

\begin{convention}\label{changepairing}
We will consider various twisted theories, in which the pairings are usually different. In particular, the dual bases $\{\phi^a\}$ which appear in the definitions of various $J$-functions may not be the same. To relate $J$-functions in different theories, we need to regard them as elements of the same loop space. This is achieved by rescaling the elements in loop spaces. For example, there is a rescaling map
\begin{align*}
\big(\ca{K}^1,\ (\,,&\,)^{R,l}\big)\rightarrow\big(\ca{K}^1,\ (\,,\,)\big),\\
E&\mapsto E\otimes(\text{det}\,\ca{R})^{-l/2},
\end{align*}
which identifies the loop space in fake quantum $K$-theory of level $l$ with that in fake quantum $K$-theory of level 0.
\end{convention}

One of our main results is the following adelic characterization of values of the big $J$-function in quantum $K$-theory of level $l$, generalizing Theorem \ref{level0adele}.
\begin{theorem}\label{levelladele}
The values of $\ca{J}^{R,l}_{S_\infty}(\mb{t})$ are characterized by the following requirements:
\begin{enumerate}
\item $\ca{J}^{R,l}_{S_\infty}(\mb{t})$ has possible poles only at 0, $\infty$, and roots of unity;
\item $\ca{J}^{R,l}_{S_\infty}(\mb{t})_{(1)}$ lies on $\ca{L}^{R,l}_{fake}$;
\item for every primitive root of unity $\eta$ of order $m\neq1$,
\begin{align*}
\ca{J}^{R,l}_{S_\infty}(\mb{t})_{(\eta)}&(q^{1/m}/\eta)\in\sqrt{\frac{\lambda_{-1}(T_X)}{\lambda_{-1}(\Psi^mT_X)}}\\
\times\,&\emph{exp}\sum_{i\geq1}\bigg(\frac{\Psi^iT_X^\vee}{i(1-\eta^{-i}q^{i/m})}-\frac{\Psi^{im}T_X^\vee}{i(1-q^{im})}\bigg)\,\ca{T}^{R,l}_m\big(\ca{J}^{R,l}_{S_\infty}(\mb{t})_{(1)}\big),
\end{align*}
where the space $\ca{T}^{R,l}_m(\mb{f})$ is defined as in Definition \ref{twistedtangent} but starting with a point $\mb{f}\in\ca{L}^{R,l}_{fake}$.
\end{enumerate}
\end{theorem}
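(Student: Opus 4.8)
The plan is to adapt the proof of Theorem~\ref{level0adele} of Givental and Tonita \cite{givental13,tonita3} by carrying the determinant twist $\ca{D}^{R,l}$ through the virtual Kawasaki--Riemann--Roch analysis. Each term of $\ca{J}^{R,l}_{S_\infty}(\mb{t})$ is a permutation-equivariant invariant $\chi\bigl(\overline{\ca{M}}_{0,k+1}(X,\beta)/S_k,\ \ca{O}^{\vir}\otimes\ca{D}^{R,l}\otimes\mathrm{ev}_1^*\tfrac{\phi_a}{1-qL}\otimes\bigotimes_{i\geq 2}\mb{t}(L_i)\bigr)$, and by the virtual KRR formula of Section~\ref{kloc} it is a sum of fake Euler characteristics over the Kawasaki strata, each now weighted by an extra factor $\mathrm{tr}_h(\ca{D}^{R,l})$ in addition to the usual traces of the virtual structure sheaf and the cotangent line. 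As in the level-$0$ case, the strata split according to whether the distinguished automorphism $h$ acts trivially ($\eta=1$) or by a primitive $m$-th root of unity $\eta$ on the cotangent line at the first marking; the former contribute the part of $\ca{J}^{R,l}_{S_\infty}$ with poles at $q=1$, the latter the part with poles at $\eta^{-1}$. Requirement~(1) is then immediate: $\ca{D}^{R,l}$ is a genuine line bundle, so it adds no $q$-dependence, and the only source of poles in $q$ is still the expansion of $\mathrm{tr}_h\bigl(1/(1-qL)\bigr)$ in $1-q$ (or $1-q\eta$).

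For requirement~(2), the principal $\eta=1$ stratum ($h=\mathrm{id}$) contributes, by the definition of the level-$l$ fake invariants $\langle\,\cdot\,\rangle^{fake,R,l}_{0,k,\beta}$ (which carry $\mathrm{ch}(\ca{D}^{R,l})$), precisely the series $\ca{J}^{R,l}_{fake}$. The remaining $\eta=1$ strata glue a ``core'' carrying a nontrivial automorphism onto the component of the first marking along a node; here one uses the compatibility results of Section~\ref{property} --- the nodal splitting of $\ca{D}^{R,l}$ (Proposition~\ref{cutedge}), whose node-evaluation correction $\mathrm{det}(\mathrm{ev}^*\ca{R})^{-l}$ is exactly absorbed by the twisted pairing $(\,\cdot\,,\,\cdot\,)^{R,l}$; its invariance under contractions and forgetting markings (Proposition~\ref{cont1}); and its value on constant maps (Corollary~\ref{cormappt}). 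These show that the determinant twist is compatible with the overruled-cone structure of $\ca{L}^{R,l}_{fake}$, so that the whole $\eta=1$ contribution is a point of $\ca{L}^{R,l}_{fake}$. Since the combinatorics of strata is unchanged from the level-$0$ case once these compatibilities are in place, this step mirrors \cite{tonita3}.

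Requirement~(3) is the technical heart. For $\eta$ a primitive $m$-th root of unity, the relevant strata are built from a ``stem'' $\P^1$ on which $h$ acts by an order-$m$ rotation fixing the first marking, together with legs attached along $\Z/m$-orbits and glued through the degree-$m$ quotient map $z\mapsto z^m$ (the geometric origin of the Adams operation $\Psi^m$ and of $Q^\beta\mapsto Q^{m\beta}$). I would decompose $\ca{D}^{R,l}$ on such a stratum, via the normalization exact sequences exactly as in the proof of Proposition~\ref{cutedge}, into a determinant line supported on the stem, the pullbacks of the determinant lines of the leg moduli spaces, and node-evaluation corrections. Taking $\mathrm{tr}_h$: the node corrections are again absorbed into $(\,\cdot\,,\,\cdot\,)^{R,l}$; the leg factors, after the $\Psi^m$ forced by the $\Z/m$-quotient, reassemble into $\ca{T}^{R,l}_m$ applied to $\ca{J}^{R,l}_{S_\infty}(\mb{t})_{(1)}$, just as the level-$0$ legs reassemble into $\ca{T}_m$ applied to $\ca{J}_{S_\infty}(\mb{t})_{(1)}$; and the stem factor $\mathrm{tr}_h\bigl((\mathrm{det}\,R\pi_* f^*\ca{R})^{-l}\bigr)$ over the universal family of stems must be computed by holomorphic Lefschetz localization at the two $\Z/m$-fixed points. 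The main obstacle will be to carry out this stem computation --- pinning down the $\Z/m$-equivariant structure of $f^*\ca{R}$ and of its pushforward --- and to check that, after the substitution $q\mapsto q^{1/m}/\eta$, the resulting factor conspires with the level-$0$ stem contributions (the $\sqrt{\lambda_{-1}(T_X)}$-propagator and $\exp\sum_{i\geq1}(\cdots)$, which are the traces of the stem's virtual tangent and smoothing bundles) so that the operator in requirement~(3) is \emph{identical} to that in Theorem~\ref{level0adele}, the only net effect of the level being the replacement of $\ca{L}_{fake}$, $\ca{T}_m$ by $\ca{L}^{R,l}_{fake}$, $\ca{T}^{R,l}_m$; equivalently, one must identify this stem factor with the degree-dependent factor $\prod_i\bigl(L_i^{-\beta_i}q^{\beta_i(\beta_i+1)/2}\bigr)^l$ in the Chern roots $L_i$ of $\ca{R}$ that distinguishes fake $K$-theory of level $l$ from level $0$ (the same factor responsible for Theorem~\ref{detmodify}). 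Finally, that requirements~(1)--(3) \emph{characterize} $\ca{J}^{R,l}_{S_\infty}(\mb{t})$ follows as in \cite{givental13}: the input $\mb{t}$ is recovered by projecting to $\ca{K}_+$ along $\ca{K}_-$, and a reduced rational function with the prescribed principal parts at $0$, $\infty$ and all roots of unity is unique.
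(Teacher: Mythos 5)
Your outline for requirements (1) and (2) matches the paper's argument: (1) is immediate, and (2) follows from the virtual KRR formula together with the nodal splitting of $\ca{D}^{R,l}$ (Proposition \ref{cutedge}), after which Tonita's level-$0$ argument goes through verbatim. Your description of the stem geometry for requirement (3) is also correct, and you have correctly isolated the crux: one must show that the extra factor $\mathrm{tr}_h(\ca{D}^{R,l})$ on the stem strata changes nothing except the cone at which the tangent space is taken, i.e.\ that the propagator $\Box_\eta\,\Box_m^{-1}$ of Theorem \ref{level0adele} survives untouched.

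However, at exactly that point your proposal stops: you write that the stem factor ``must be computed by holomorphic Lefschetz localization'' and that the result ``must'' conspire to reproduce the level-$0$ operator, but you do not derive this, and asserting it is essentially restating the conclusion of part (3). The paper closes this gap by a genuine computation: it decomposes the trace of the index bundle into eigenbundles, $\mathrm{tr}_g\big(R\pi_*(\mathrm{ev}^*\ca{R})\big)=\sum_{i=0}^{m-1}\eta^i\,Rp_*(\mathrm{ev}^*\ca{R}\otimes\bb{C}_{\eta^i})$, views $\mathrm{ch}\,\mathrm{tr}_g\,\ca{D}^{R,l}$ as a twisting class in the cohomological GW theory of $X\times B\bb{Z}_m$, and applies Tseng's orbifold quantum Riemann--Roch theorem to each eigen-factor. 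The nontrivial output is that \emph{both} the sector labeled by $1$ and the sector labeled by $g^{-1}$ are rotated by the \emph{same} operator $D_m=\exp\!\big(-ml\,\mathrm{ch}_2(\ca{R})/z\big)$ (after discarding scalar $z$-series using the overruled property and the identity $m\log\eta=0$); it is this coincidence that cancels in $\Box_\eta\,\Box_m^{-1}$ and leaves the level-$0$ operator intact. You also omit the needed preliminary Proposition \ref{faketotwist}, namely $\ca{L}^{R,l}_{fake}=D_1\,\ca{L}_{fake}$ (via Coates--Givental QRR), which together with $\Psi^m(D_1)=D_m$ is what lets $D_m$ be absorbed into $\ca{T}^{R,l}_m\big(\ca{J}^{R,l}_{S_\infty}(\mb{t})_{(1)}\big)$ rather than appearing as an extra factor in the statement. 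Without these two computations the claimed form of condition (3) is unproven; a pointwise Lefschetz computation on a single stem $\bb{P}^1$ would in any case have to be promoted to a statement about the whole family (the stem space with its legs and unramified markings), which is precisely what the twisted-orbifold-GW formalism accomplishes.
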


\subsubsection{The proof of Theorem \ref{levelladele}}
We follow the proofs of \cite{givental2} and \cite{tonita3}.
The first requirement in Theorem \ref{levelladele} is obviously satisfied. For the second condition, we apply the virtual KRR formula to the $J$-function. Let $\widetilde{\mb{T}}(q)$ be the sum of $\mb{t}(q)$ and all contributions in $\ca{J}^{R,l}_{S_\infty}(\mb{t})$ which are regular at $q=1$. According to Proposition \ref{cutedge}, the level structure $\ca{D}^{R,l}$ splits ``correctly'' over nodal strata. With this understood, the following proposition follows from an argument identical to the one given in \cite[Proposition 5.2]{tonita3}.
\begin{proposition}
We have \[\ca{J}^{R,l}_{S_\infty}(\mb{t}(q))_{(1)}=\ca{J}^{R,l}_{fake}(\widetilde{\mb{T}}(q))
\] 
as elements in $\ca{K}^1$. In particular, it shows that $\ca{J}^{R,l}_{S_\infty}(\mb{t})_{(1)}$ lies on the cone $\ca{L}^{R,l}_{fake}$.
\end{proposition}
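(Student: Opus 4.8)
The plan is to follow the proof of \cite[Proposition 5.2]{tonita3} essentially verbatim, the one genuinely new input being the gluing compatibility of the level structure recorded in Proposition \ref{cutedge}. First I would apply the virtual Kawasaki--Riemann--Roch formula (\ref{eq:orkrr}) to the stack $\overline{\ca{M}}_{0,k+1}(X,\beta)/S_k$ with the $K$-class $\ev_1^*\!\big(\tfrac{\phi_a}{1-qL_1}\big)\otimes\bigotimes_{i=2}^{k+1}\ev_i^*\mb{t}(L_i)\otimes\ca{O}^{\vir}\otimes\ca{D}^{R,l}$, so that each correlator occurring in $\ca{J}^{R,l}_{S_\infty}(\mb{t})$ is rewritten as a sum of fake Euler characteristics over the Kawasaki strata. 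As recalled in Section \ref{kloc}, these strata are indexed by triples $(C,f,h)$ consisting of a stable map to $X$ together with a finite-order automorphism $h$ fixing the first marking; writing $\eta$ for the eigenvalue of $h$ on the cotangent line at that marking, the strata with $\eta\neq1$ contribute rational functions of $q$ with poles only at $\eta^{-1}\neq1$, hence are regular at $q=1$, while the pole at $q=1$ is produced precisely by the strata with $\eta=1$. Consequently, upon expanding at $q=1$, the strata with $\eta\neq1$ together with $\mb{t}(q)$ supply exactly the regular part $\widetilde{\mb{T}}(q)$, and it remains to identify the $\eta=1$ contribution.

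For a stratum with $\eta=1$ the subcurve $C_0\subset C$ on which $h$ acts trivially carries the first marking, and $C\setminus C_0$ is a union of ``legs'' attached to $C_0$ along nodes and moved nontrivially by $h$; accordingly, up to a finite quotient, the stratum is a fiber product over copies of $X$ of a central moduli space $\overline{\ca{M}}_{0,\bullet}(X,\beta_0)$ (carrying the first marking) with the moduli of the legs, and the virtual structure sheaf, virtual tangent bundle and virtual normal bundle all decompose along the node sections in the usual way. The crucial point is the behaviour of the level twist: iterating Proposition \ref{cutedge} over the nodes joining $C_0$ to the legs shows that the restriction of $\ca{D}^{R,l}$ to the stratum factors as the level structure $\ca{D}^{R,l}$ of the central space $\overline{\ca{M}}_{0,\bullet}(X,\beta_0)$, tensored with the level structures of the legs, tensored with $\bigotimes_p\det\!\big(\ev_p^*\ca{R}\big)^{l}$, where $p$ runs over the connecting nodes and $\ev_p$ is evaluation at $p$. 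The first factor is exactly the line bundle whose Chern character defines the fake invariants of level $l$; the leg factors assemble into the legs' own generating series; and the node line bundles $\det(\ev_p^*\ca{R})^{l}=\ev_p^*(\det\ca{R})^{l}$ are precisely the factors which, together with the gluing axiom of the fake theory (whose Poincar\'e pairing is the Mukai pairing $\chi(u\otimes v)$ by Hirzebruch--Riemann--Roch), turn the Mukai-dual basis into the dual basis $\{\phi^a\}$ with respect to the twisted pairing $(\,\cdot\,,\,\cdot\,)^{R,l}=\chi\!\big(\,\cdot\,\otimes\,\cdot\,\otimes(\det\ca{R})^{-l}\big)$ used to form $\ca{J}^{R,l}_{fake}$.

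With this factorization in place, the remainder is the standard Givental--Tonita combinatorics: summing over all $\eta=1$ strata and over the distributions of the last $k$ markings between $C_0$ and the legs, each leg's contribution reproduces one of the $q=1$--regular terms of $\ca{J}^{R,l}_{S_\infty}(\mb{t})$, while the dilaton and string identities on $C_0$ reorganize the sum into $\ca{J}^{R,l}_{fake}$ evaluated at the input $\widetilde{\mb{T}}(q)$ obtained from $\mb{t}(q)$ by adjoining those leg contributions. Since the level twist has entered only through the central fake-level-$l$ class and through the node determinants absorbed by $(\,\cdot\,,\,\cdot\,)^{R,l}$, the identity $\ca{J}^{R,l}_{S_\infty}(\mb{t}(q))_{(1)}=\ca{J}^{R,l}_{fake}(\widetilde{\mb{T}}(q))$ follows exactly as in \cite[Proposition 5.2]{tonita3}, and the last assertion is then immediate because $\widetilde{\mb{T}}(q)\in\ca{K}^1_+$ and $\ca{L}^{R,l}_{fake}$ is by definition the range of $\ca{J}^{R,l}_{fake}$ on $\ca{K}^1_+$. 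I expect the main obstacle to be the bookkeeping in the second paragraph: checking that the iterated use of Proposition \ref{cutedge} over a whole tree of nodes produces precisely $\bigotimes_p\det(\ev_p^*\ca{R})^{l}$ with the correct exponent and no spurious contributions from the stacky automorphism groups of the Kawasaki strata or from the $S_k$-quotient, so that these factors line up on the nose with the twisting of the pairing at each node.
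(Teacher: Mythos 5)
Your proposal is correct and follows essentially the same route as the paper, which itself simply invokes the argument of Tonita's Proposition 5.2 together with the splitting of $\ca{D}^{R,l}$ over nodal strata from Proposition \ref{cutedge}. Your additional bookkeeping — in particular the observation that the node factors $\det(\ev_p^*\ca{R})^{l}$ produced by iterating Proposition \ref{cutedge} are exactly absorbed by replacing the Mukai-dual basis with the basis dual under $(\,\cdot\,,\,\cdot\,)^{R,l}$ — is precisely the point the paper leaves implicit, and you have the exponents right.
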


Before we move on to prove the third condition in Theorem \ref{levelladele}, we characterize the cone $\ca{L}^{R,l}_{fake}$ of fake quantum $K$-theory of level $l$ in terms of the cone $\ca{L}_{fake}$ of level 0. This characterization will be needed later in the proof of Theorem \ref{levelladele}. 

Note that the fake quantum $K$-theory is a version of twisted cohomological Gromov-Witten theory. The machinery of twisted cohomological Gromov-Witten invariants was introduced in \cite{coates}, and generalized in various directions in \cite{tseng2,tonita4}. Let $\ca{H}$ be the loop space of the cohomological GW theory of $X$
\[
\ca{H}:=H^{even}(X,\bb{C})[z^{-1},z]][[Q]].
\]
It is equipped with a natural symplectic form $\Omega_H$ on $\ca{H}$ given by
\[
\Omega_H\big(\mb{f},\mb{g}\big)=\text{Res}_{z=0}\big(\mb{f}(-z),\mb{g}(z)\big)dz,
\]
where the pairing $(\,,)$ is the Poincar\'e pairing. With respect to $\Omega_H$, there is a Lagrangian polarization $\ca{H}=\ca{H}_+\oplus\ca{H}_-$, where
\[
\ca{H}_+:=H^{even}(X,\bb{C})[[z]][[Q]],\quad\ca{H}_-:=\frac{1}{z}H^{even}(X,\bb{C})[z^{-1}][[Q]].
\]
Inside $\ca{H}$, one can define an overruled Lagrangian cone $\ca{L}_H$ by the image of the cohomological big $J$-function. Since we will not use the explicit description of $\ca{L}_H$ in this paper, we refer the reader to \cite{coates} for the basic definitions in the cohomological GW theory. 
\begin{convention}
Throughout this subsection, we identify $\ca{K}^1$ with $\ca{H}$ via the Chern character 
\begin{align*}
&\text{qch}:\ca{K}^1\rightarrow\ca{H},\\
&E\mapsto\text{ch}(E),\,q\mapsto e^z.
\end{align*}
Hence $K$-theoretic insertions (e.g., $\phi_a$ and $L$) in the correlators of twisted cohomological theories should be understood as their Chern characters (e.g., $\text{ch}\,\phi_a$ and $\text{ch}\,L$).
\end{convention}

By definition, the fake quantum $K$-invariants are obtained from the cohomological invariants by inserting the Todd class $\text{Td}(T^\vir)$ of the virtual tangent bundle $T^\vir$. According to \cite{coates3}, the virtual tangent bundle can be written as
\begin{equation}\label{eq:decompoftangent}
T^\vir=\pi_*(\text{ev}^*T_X-1)-\pi_*(L_{k+1}^\vee-1)-(\pi_*i_*\ca{O}_\ca{Z})^\vee
\end{equation}
in $K^0(\overline{\ca{M}}_{0,k}(X,\beta))$. Here, $i:\ca{Z}\rightarrow\ca{C}$ is the embedding of the nodal locus. The three parts correspond respectively to: (i) deformations of maps to $X$ of a fixed source curve, (ii) deformations of complex structure and configuration of markings, and (iii) smoothing the nodes. 

It is proved in \cite{coates2} that the cone $\ca{L}_{fake}$ (of level 0) is given explicitly in terms of $\ca{L}_H$:
\[
\text{qch}(\ca{L}_{fake})=\Delta\ca{L}_H,
\] 
where the \emph{loop group transformation} $\Delta$ is the Euler-Maclaurin asymptotics of the infinite product
\[
\Delta\sim\prod_{\text{Chern roots $x$ of $T_X$}}\prod_{r=1}^\infty\frac{x-rz}{1-e^{-x+rz}}.
\]
Here, $\Delta$ acts on $\ca{H}$ by the pointwise multiplication, and it is determined only by the Todd class of the first summand in the expression of $T^\vir$. The second and third summands in (\ref{eq:decompoftangent}) are respectively responsible for the changes of the dilaton shifts and the polarizations between $\ca{H}$ and $\ca{K}^1$. We refer to \cite{coates3} for the details.

The fake quantum $K$-invariants of level $l$ are obtained from those of level $0$ by inserting one more class $\text{ch}(\ca{D}^{R,l})$. Its effect on the Lagrangian cone is described in the following proposition. 
\begin{proposition}\label{faketotwist}
Under the identification $z=\emph{log}\,q$, we have the following identity 
\[
\ca{L}_{fake}^{R,l}=\emph{exp}\bigg(-l\bigg(\frac{\emph{ch}_2\,\ca{R}}{z}\bigg)\bigg)\ca{L}_{fake}
\]
in the loop space $\big(\ca{K}^1,\,(\,,\,)\big)$.
\end{proposition}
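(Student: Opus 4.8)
The plan is to track the effect of inserting the extra characteristic class $\mathrm{ch}(\ca{D}^{R,l}) = \exp(-l\,\mathrm{ch}(\ca{R}_{0,k,\beta}))$ into the fake invariants through the now-standard machinery of \cite{coates, coates2, coates3, tonita3}. The key structural input is that $\ca{D}^{R,l} = (\mathrm{det}\,\ca{R}_{0,k,\beta})^{-l}$ is built from the index bundle $\ca{R}_{0,k,\beta} = R\pi_*(\mathrm{ev}^*\ca{R})$ exactly as the tangent part of $T^\vir$ in \eqref{eq:decompoftangent} is built from $\pi_*(\mathrm{ev}^*T_X - 1)$. So the modification from level $0$ to level $l$ is, at the level of cohomological GW theory, multiplication by an additional \emph{Euler--Maclaurin-type} loop group factor coming from Grothendieck--Riemann--Roch applied to $\mathrm{ch}(\mathrm{det}\,R\pi_*(\mathrm{ev}^*\ca{R}))$.

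First I would recall that $\mathrm{ch}(\mathrm{det}\,\ca{R}_{0,k,\beta}) = \exp(\mathrm{ch}_1\,\ca{R}_{0,k,\beta})$, so $\mathrm{ch}(\ca{D}^{R,l}) = \exp(-l\,\mathrm{ch}_1\,\ca{R}_{0,k,\beta})$, a multiplicative class that is an exponential of a \emph{degree-two} universal class. Then I would apply Grothendieck--Riemann--Roch to the projection $\pi:\ca{C}\to\overline{\ca{M}}_{0,k}(X,\beta)$ to compute $\mathrm{ch}_1\,\ca{R}_{0,k,\beta} = \pi_*\big(\mathrm{ch}(\mathrm{ev}^*\ca{R})\,\mathrm{Td}(T_\pi)\big)_{\deg 2}$. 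Following the Coates--Givental--Tonita analysis of how pushforwards along the universal curve decompose into a ``bulk'' contribution and boundary (node/marked-point) contributions, the bulk part of $\mathrm{ch}_1\,\ca{R}_{0,k,\beta}$ produces precisely the pointwise multiplication operator $\exp(-l\,\mathrm{ch}_2\ca{R}/z)$ on $\ca{H}$ (the $\mathrm{ch}_2$ coming from the fiber integration $\pi_*$ dropping cohomological degree by $2$, and the $1/z$ bookkeeping the dilaton-shift/polarization conventions, identical to how the $-z$ in $\Delta$ arises), while the marked-point and nodal contributions get absorbed into compatible changes of dilaton shift and polarization — consistent with the fact that in Proposition \ref{cutedge} the level structure splits correctly over nodal strata, contributing only the edge term $\mathrm{det}(x^*(\ca{P}\times_G R))^{-l}$ which matches the node-smoothing bookkeeping. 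The upshot is that the loop group element implementing the level-$l$ twist is exactly $\exp(-l(\mathrm{ch}_2\,\ca{R}/z))$, acting by pointwise multiplication, and hence $\ca{L}_{fake}^{R,l} = \exp(-l(\mathrm{ch}_2\,\ca{R}/z))\,\ca{L}_{fake}$ under $z = \log q$.

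Concretely, the steps in order: (1) write $\mathrm{ch}(\ca{D}^{R,l})$ as the exponential of $-l\,\mathrm{ch}_1$ of the index bundle and record that it is an exponential of a degree-$2$ universal class, so that its insertion is governed by the \emph{quantized} linear operator formalism of \cite{coates2, tonita3}; (2) apply GRR to express $\mathrm{ch}_1\,\ca{R}_{0,k,\beta}$ via $\pi_*$ of $\mathrm{ev}^*(\mathrm{ch}\,\ca{R})$ times $\mathrm{Td}(T_\pi)$, and split the resulting class into the contribution supported away from nodes/markings versus the boundary contributions, exactly as in the derivation of $\Delta$ in \cite{coates3}; (3) identify the away-from-boundary contribution as pointwise multiplication by the series whose degree-$2$-in-$x$/degree-$(-1)$-in-$z$ part is $-l\,\mathrm{ch}_2\,\ca{R}\cdot z^{-1}$ — here one uses that only the leading term survives because $\ca{D}^{R,l}$ is a \emph{line} bundle, so there is no infinite Euler--Maclaurin product, just the single exponential factor; (4) check that the boundary contributions are compatible with (i.e.\ do not change) the dilaton shift and polarization beyond what is already accounted for in passing between $\ca{H}$ and $\ca{K}^1$, invoking Proposition \ref{cutedge} for the nodal compatibility and a direct computation at marked points (where $\mathrm{ev}^*\ca{R}$ restricts trivially, contributing nothing new); (5) conclude by the general principle that multiplying the integrand of all genus-$0$ descendant invariants by $\exp$ of a pointwise-multiplication loop group element transforms the Lagrangian cone by that same element, so $\ca{L}_{fake}^{R,l} = \exp(-l(\mathrm{ch}_2\,\ca{R}/z))\,\ca{L}_{fake}$.

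The main obstacle I anticipate is Step (4): verifying that the boundary/node and marked-point corrections coming out of GRR for $\mathrm{ch}_1\,\ca{R}_{0,k,\beta}$ really do reorganize into exactly the same dilaton-shift and polarization adjustments that are already present in the level-$0$ identification $\mathrm{qch}(\ca{L}_{fake}) = \Delta\,\ca{L}_H$, so that the \emph{net} new operator is the clean pointwise factor $\exp(-l\,\mathrm{ch}_2\,\ca{R}/z)$ with no residual polarization twist. This is exactly where Proposition \ref{cutedge} does the essential work — it guarantees the node contributions of $\ca{D}^{R,l}$ factor through the edge term $\mathrm{det}(x^*(\ca{P}\times_G R))^{-l}$, which is the precise shape needed for compatibility with the node-smoothing summand $(\pi_*i_*\ca{O}_\ca{Z})^\vee$ in \eqref{eq:decompoftangent} — but assembling this bookkeeping carefully, keeping track of signs and of the $z = \log q$ substitution, is the technical heart of the argument. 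Everything else is a direct transcription of the Coates--Givental and Tonita computations with $T_X$ replaced by (and augmented with) the line bundle $\mathrm{det}\,\ca{R}$.
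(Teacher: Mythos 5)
Your overall strategy is the same as the paper's: both reduce the statement to the Coates--Givental quantum Riemann--Roch formalism for twisting by a multiplicative class of the index bundle $\ca{R}_{k,\beta}=R\pi_*(\mathrm{ev}^*\ca{R})$, starting from $\mathrm{ch}(\ca{D}^{R,l})=\exp(-l\,\mathrm{ch}_1\ca{R}_{k,\beta})$. The difference is that the paper treats the twisting theorem of \cite{coates} as a black box --- it simply plugs $s_1=-l$, $s_i=0$ for $i\neq 1$ into the general formula
\[
\exp\Bigl(\sum_{m,i\geq 0}s_{2m-1+i}\tfrac{B_{2m}}{(2m)!}\,\mathrm{ch}_i(\ca{R})\,z^{2m-1}\Bigr),
\]
whereas you propose to re-derive that formula in this special case via GRR and the bulk/boundary decomposition of \cite{coates3}. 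Your route would work, but it is strictly more labor for the same output, and the compatibility of the nodal contributions (your step (4)) is exactly what the cited theorem already packages.

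There is one concrete slip in your step (3). You claim that ``only the leading term survives because $\ca{D}^{R,l}$ is a line bundle, so there is no infinite Euler--Maclaurin product, just the single exponential factor.'' That is not the right reason, and it is not quite what happens: the $m=1$ Bernoulli term in the expansion does not vanish, and the general formula produces the operator
\[
\exp\Bigl(-l\Bigl(\tfrac{\mathrm{ch}_2\ca{R}}{z}+\tfrac{\mathrm{ch}_0(\ca{R})\,z}{12}\Bigr)\Bigr),
\]
not just $\exp(-l\,\mathrm{ch}_2\ca{R}/z)$. The extra factor $\exp(-l\,\mathrm{ch}_0(\ca{R})z/12)$ is a scalar power series in $z$, and the correct way to discard it --- which is the last step of the paper's proof --- is to invoke that $\ca{L}_{fake}$ is an \emph{overruled} Lagrangian cone and hence invariant under multiplication by scalar functions of $z$. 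Your argument as written would either miss this term or dismiss it for the wrong reason; the conclusion survives, but you need to add the overruled-cone observation to close the gap.
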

\begin{proof}
Recall that the level structure $\ca{D}^{R,l}$ is defined as a certain power of the determinant of the index bundle $\ca{R}_{k,\beta}=R\pi_*(\text{ev}^*\ca{R})$.
Note that
\begin{equation}\label{eq:ourtwist}
\text{ch}(\ca{D}^{R,l})=\text{exp}\big(-l\cdot\text{ch}_1(\ca{R}_{k,\beta})\big).
\end{equation}
According to \cite{coates}, the cone of a theory twisted by a general multiplicative characteristic class of the form
\[
\text{exp}\big(\sum_{i\geq0}s_i\,\text{ch}_i(\ca{R}_{k,\beta})\big)
\]
is obtained from the cone of the untwisted theory by applying the operator
\[
\text{exp}\bigg(\sum_{m,i\geq 0}s_{2m-1+i}\frac{B_{2m}}{(2m)!}\text{ch}_i(\ca{R})\cdot z^{2m-1}\bigg).
\]
Here the Bernoulli numbers $B_{2m}$ are defined by
\[
\frac{t}{1-e^{-t}}=1+\frac{t}{2}+\sum_{m\geq1}\frac{B_{2m}}{(2m)!}t^{2m},
\]
and the operator acts on $\ca{H}$ by the pointwise multiplication. For the twisting class (\ref{eq:ourtwist}), we have $s_1=-l$ and $s_i=0$ if $i\neq1$. By applying the above result, we obtain the corresponding loop group transformation:
$$\text{exp}\bigg(-l\bigg(\frac{\text{ch}_2(\ca{R})}{z}+\frac{\text{ch}_0(\ca{R})\cdot z}{12}\bigg)\bigg).$$
Note that the cone $\ca{L}_{fake}$, being overruled, is invariant under multiplication by functions of $z$. Therefore, we can ignore the second summand in the exponent of the above operator.\end{proof}

Now let us prove the third condition in Theorem \ref{levelladele}. Let $\eta\neq1$ be a primitive root of unity of order $m$. The Kawasaki strata in $\overline{\ca{M}}_{g,k+1}(X,\beta)/S_k$ which contribute terms with poles at $q=\eta^{-1}$ to the $J$-function are called the \emph{stem spaces} in \cite{givental2}. We give a brief description of stem spaces here, and we refer the reader to \cite[\textsection{8}]{givental2} for more details. Let $(C',f,h)$ be a point in these strata. Consider the unique maximal subcurve $C_+\subset C'$ containing the first marking where the $m$-th power $h^m$ acts as the identity. Here we also require that the nodes between components in $C_+$ are \emph{balanced}, i.e., we require the eigenvalues of $h$ on the two branches of a node in $C_+$ are inverse to each other. Hence the subcurve $C_+$ is a chain of $\bb{P}^1$, on which $h$ acts as multiplication by $\eta$. There are only two smooth points on $C_+$ which are fixed by $h$: the first marking on the first component, and one more on the last component. The second point is called the \emph{butt} in \cite{givental2}. The butt can be a regular point, a marking, or a node in $C'$. The automorphism $h$ acts on the cotangent space at the butt by $\eta^{-1}$. The other marked points and unbalanced nodes on $C_+$ are cyclically permuted by $h$. We denote by $C$ the quotient of $C_+$ by the $\bb{Z}_m$-symmetry generated by $h$. The quotient curve $C$ together with the induced quotient stable map is called a \emph{stem} in \cite{givental2}. Note that a stem curve can carry unramified marked points, coming from symmetric configurations of $m$-tuples of markings on the cover, or nodes, coming from $m$-tuples of symmetric nodes on the cover, where further components of $C'$, cyclically permuted by $h$, are attached. 

One of the key observations in \cite{givental2} is that the data $(C_+,C,f)$ also represents a stable map to the orbifold $X/{\bb{Z}_m}=X\times B\bb{Z}_m$ in the sense of \cite{chenruan} and \cite{abramovich4}. Therefore, the contributions with poles at $q=\eta^{-1}$ in the KRR formula for the $J$-function can be expressed as cohomological integrals over the moduli space of stable maps to $X\times B\bb{Z}_m$, twisted by the Todd classes of the traces of the virtual tangent and normal bundles of the Kawasaki strata, and the Chern class of the trace of the level structure $\ca{D}^{R,l}$. To be more precise, we introduce some notations first. Let $\overline{\ca{M}}^{X,\beta}_{0,k+2}(\eta)$ denote the stem space. It parametrizes stems of degree $\beta$, which are quotient maps by the $\bb{Z}_m$-symmetry generated by $g$. Here $g$ acts by $\eta$ and $\eta^{-1}$ on the cotangent lines at the first and last markings of the covering curve, respectively. The only markings on the covering curve fixed by $h$ are the first and last markings. Note that the stem space is a Kawasaki stratum in $\overline{\ca{M}}_{0,mk+2}(X,m\beta)$. According to \cite[Proposition 5]{givental2}, the stem space $\overline{\ca{M}}^{X,\beta}_{0,k+2}(\eta)$ is isomorphic to the moduli space $\overline{\ca{M}}^{X/\bb{Z}_m,\beta}_{0,k+2}(g,1,\dots,1,g^{-1})$ of stable maps to the orbifold $X/\bb{Z}_m$. Here, the sequence $(g,1,\dots,1,g^{-1})$ indicates the sectors where the evaluation maps land. We also consider the stem space $\overline{\ca{M}}^{X,\beta}_{0,k+1}(\eta)$ parametrizing stems whose butts are regular points. Similarly, we have an isomorphism between $\overline{\ca{M}}^{X,\beta}_{0,k+1}(\eta)$ and $\overline{\ca{M}}^{X/\bb{Z}_m,\beta}_{0,k+1}(g,1,\dots,1)$.

For simplicity, we denote the stem space $\overline{\ca{M}}^{X,\beta}_{0,k+1}(\eta)$ by $\overline{\ca{M}}$. Modelling on the contributions in the virtual KRR formula applied to the stack $\overline{\ca{M}}_{g,k+1}(X,\beta)/S_k$, we define the correlators in the stem theory of level $l$ by
\[
\bigg\langle\frac{\phi}{1-qL^{1/m}},\mb{t}(L),\dots,\mb{t}(L)\bigg\rangle_{0,k+1,\beta}^{stem,R,l}
:=\int_{[\overline{\ca{M}}]^\vir}\,\text{td}(T_{\overline{\ca{M}}})\text{ch}\bigg(\frac{\text{ev}_1^*\,\phi\cdot\prod_{i=2}^{k}\text{ev}_i^*\,\mb{t}(L_i)\cdot\text{tr}_g\,\ca{D}^{R,l}}{\big(1-qL_1^{1/m}\big)\cdot\text{tr}_g\big(\wedge^*N_{\overline{\ca{M}}}^\vee\big)}\bigg).
\]
Here $[\overline{\ca{M}}]^\vir$ is the virtual fundamental class of the moduli space $\overline{\ca{M}}^{X/\bb{Z}_m,\beta}_{0,k+1}(g,1,\dots,1)$ of stable maps to $X/\bb{Z}_m$, and $T_{\overline{\ca{M}}}$ and $N_{\overline{\ca{M}}}$ are, respectively, the virtual tangent and normal bundles to $\overline{\ca{M}}$, considered as a Kawasaki stratum in $\overline{\ca{M}}_{0,mk+1}(X,m\beta)$. The line bundle $L_1$ is formed by the cotangent spaces of stem curves at the first markings, while $L_1^{1/m}$ corresponds to the cotangent line bundle of the covering curves (see \cite[\textsection{7}]{givental2} for the explanation). From the definition, we see that the stem theory of level $l$ is a type of twisted cohomological GW theory of $X\times B\bb{Z}_m$.

Before we investigate the stem theory further, let us recall some basic facts about the GW theory of the orbifold $X\times B\bb{Z}_m$. In this case, the Lagrangian cone of the cohomological GW theory of $X\times B\bb{Z}_m$ is the product of $m$ copies of the Lagrangian cone of the GW theory of $X$. It lies inside the product of $m$ copies of the Fock space $\ca{H}$. We refer to each copy of the Lagrangian cone as a \emph{sector}. These sectors are labeled by elements of $\bb{Z}_m=\{1,g,\dots,g^{m-1}\}$.

The following proposition relates the Laurent expansion of $\ca{J}^{R,l}_{S_\infty}(\mb{t})$ at $q=\eta^{-1}$ to generating series in stem theory.
\begin{proposition}\label{identifystem}
Let $\delta\mb{t}(q)$ be the contributions in $\ca{J}^{R,l}_{S_\infty}(\mb{t})$ which are regular at $q=\eta^{-1}$, i.e., $\delta\mb{t}(q)=1-q+\mb{t}(q)+\tilde{\mb{t}}(q)$, where $\tilde{\mb{t}}(q)$ is the sum of all the contributions from Kawasaki strata $\overline{\ca{M}}^{X,\beta}_{0,k+1}(\xi)$ with $\xi\neq \eta$. Then 
\[\ca{J}^{R,l}_{S_\infty}(\mb{t})_{(\eta)}=\delta{\mb{t}}(q)+\sum_a\sum_{(k,\beta)\neq(0,0)}\frac{Q^{m\beta}}{k!}\phi^a\bigg\langle\frac{\phi_a}{1-q\eta L^{1/m}},\mb{T}(L),\dots,\mb{T}(L),\delta{\mb{t}}(L^{1/m}/\eta)\bigg\rangle_{0,k+2,\beta}^{stem,R,l},\]
where
\begin{enumerate}
\item the evaluation morphisms at the marked points land in the twisted sector of $B\bb{Z}_m$ labeled by the sequence $(g,1,\dots,1,g^{-1})$,
\item $\mb{T}(L)=\Psi^m\,\widetilde{\mb{T}}(L)$, where $\Psi^{m}$ acts on cotangent line bundles $L\mapsto L^m$, elements of $K^0(X)_\bb{Q}$, and Novikov variables $Q^\beta\mapsto Q^{m\beta}$,
\item $\widetilde{\mb{T}}(q)$ is the input point of $\ca{J}^{R,l}_{S_\infty}(\mb{t})_{(1)}$, i.e., it is determined by
\[
1-q+\widetilde{\mb{T}}(q)=\big(\ca{J}^{R,l}_{S_\infty}(\mb{t})_{(1)}\big)_+.
\]
where $(\cdots)_+$ denotes the projection along $\ca{K}^1_-$ to $\ca{K}^1_+$.
\end{enumerate}
\end{proposition}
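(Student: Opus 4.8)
The plan is to follow the template of \cite[\textsection 7--8]{givental2} and \cite[Proposition 5.4]{tonita3}, adapting each step to account for the extra twisting class $\mathrm{ch}(\ca{D}^{R,l})$. The starting point is the virtual Kawasaki--Riemann--Roch formula applied to the stack $\overline{\ca{M}}_{0,k+1}(X,\beta)/S_k$ (with $S_k$ permuting the last $k$ markings). We have already observed, in the proof of the second condition, that the contributions regular at $q=\eta^{-1}$ are precisely the strata $\overline{\ca{M}}^{X,\beta}_{0,k+1}(\xi)$ with $\xi\neq\eta$, which assemble into $\delta\mb{t}(q)$. So the work is to identify the sum of the remaining contributions --- those from the stem spaces $\overline{\ca{M}}^{X,\beta}_{0,k+1}(\eta)$ and $\overline{\ca{M}}^{X,\beta}_{0,k+2}(\eta)$ --- with the displayed generating series in the stem theory of level $l$.

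First I would recall the stratification of a stem curve $(C',f,h)$: the subcurve $C_+$ is a chain of $\P^1$'s on which $h$ acts by $\eta$, with the first marking at one end and the butt at the other, and with $m$-tuples of symmetric markings/nodes along it where further components (cyclically permuted by $h$) are attached. Taking the $\bb{Z}_m$-quotient, $\overline{\ca{M}}^{X,\beta}_{0,k+1}(\eta)\cong\overline{\ca{M}}^{X/\bb{Z}_m,\beta}_{0,k+1}(g,1,\dots,1)$ by \cite[Proposition 5]{givental2}. The combinatorics of how $m$-tuples of symmetric insertions $\mb{t}(L_i)$ descend to a single insertion is exactly what produces the substitution $\widetilde{\mb{T}}(L)\mapsto\mb{T}(L)=\Psi^m\widetilde{\mb{T}}(L)$ --- the Adams operation $\Psi^m$ arises because the $m$-fold cover curve has cotangent line $L^{1/m}$ at each symmetric node, and the product over the $m$-tuple of the tautological/trace data gives an $m$-th power that is repackaged by $\Psi^m$ (and $Q^\beta\mapsto Q^{m\beta}$ because the cover has degree $m\beta$). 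This is the part where one must be careful but where the argument is formally identical to \cite{givental2}: the virtual normal bundle $N_{\overline{\ca{M}}}$ of the stem stratum inside $\overline{\ca{M}}_{0,mk+1}(X,m\beta)$ accounts for smoothing the symmetric nodes and for the $h$-moving part, and its $K$-theoretic Euler class in the denominator of KRR is precisely $\mathrm{tr}_g(\wedge^* N_{\overline{\ca{M}}}^\vee)$ in the definition of the stem correlator.

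The only genuinely new ingredient is the level structure: in the KRR formula one has, besides the trace of the virtual tangent/normal bundles, the factor $\mathrm{tr}_g\,\ca{D}^{R,l}$ coming from the restriction of $\ca{D}^{R,l}$ to the stem stratum. Here I would invoke Proposition \ref{cutedge} (splitting of $\ca{D}^{R,l}$ over nodal strata) together with the base-change behavior of the index bundle $\ca{R}_{k,\beta}=R\pi_*(\mathrm{ev}^*\ca{R})$ under restriction to a Kawasaki stratum: the index bundle over the stem stratum decomposes into the part coming from the cover curve $C_+$ and the parts coming from the attached symmetric $m$-tuples of components, and the $g$-action on each piece is the one prescribed. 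This shows that $\mathrm{tr}_g\,\ca{D}^{R,l}$ restricted to $\overline{\ca{M}}$ is exactly the class appearing in the stem correlator of level $l$, so that, after assembling all strata and matching the insertion $1/(1-qL)\mapsto 1/(1-q\eta L_1^{1/m})$ (the eigenvalue $\eta$ on the cotangent line at the first marking, $L_1^{1/m}$ the cotangent line of the cover) and the insertion at the butt $\delta\mb{t}(L^{1/m}/\eta)$, the Laurent expansion $\ca{J}^{R,l}_{S_\infty}(\mb{t})_{(\eta)}$ acquires exactly the claimed form. The main obstacle is bookkeeping: verifying that the trace factor from the level structure distributes over the stem decomposition in precisely the way that lets it be absorbed into a single twisted-cohomological stem theory on $X\times B\bb{Z}_m$, and checking that no extra powers of $q$ or $Q$ are introduced beyond those already tracked in the level-$0$ case; this is where Proposition \ref{cutedge} and cohomology-and-base-change do the essential work.
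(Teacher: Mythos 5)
Your proposal is correct and follows essentially the same route as the paper: the paper's proof likewise reduces to \cite[Proposition 2]{givental2}, with Proposition \ref{cutedge} supplying the factorization of $\ca{D}^{R,l}$ over nodal strata as the only new input. The one detail the paper flags that you do not is that in the permutation-equivariant setting the input $\widetilde{\mb{T}}(q)$ is not determined by setting $\mb{t}(q)=0$ as in \cite{givental2}, since marked points may be permuted; this is already built into condition (3) of the statement, so it does not affect the validity of your argument.
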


\begin{proof}
Proposition \ref{cutedge} shows that the determinant line bundle $\ca{D}^{R,l}$ factorizes 	``nicely'' over nodal strata. With this in mind, the argument of \cite[Proposition 2]{givental2} applies here with one slight change: when determining $\widetilde{\mb{T}}(q)$, we do not impose the same condition $\mb{t}(q)=0$ as in \cite[Proposition 2]{givental2}. This is because in the permutation-equivariant theory, we are allowed to permute marked points. 
\end{proof}

Proposition \ref{identifystem} shows that the Laurent expansion $\ca{J}^{R,l}_{S_\infty}(\mb{t})_{(\eta)}$ of the $J$-function around $q=\eta^{-1}$ can be identified with a tangent vector to the cone of stem theory of level $l$:
\begin{align*}
\delta&\ca{J}^{st,R,l}(\delta\mb{t},\mb{T}'):=\delta{\mb{t}}(q^{1/m})\\
&+\sum_a\sum_{(k,\beta)\neq(0,0)}\frac{Q^{\beta}}{k!}\phi^a\bigg\langle\frac{\phi_a}{1-q^{1/m} L^{1/m}},\mb{T}'(L),\dots,\mb{T}'(L),\delta{\mb{t}}(L^{1/m})\bigg\rangle_{0,k+2,\beta}^{stem,R,l},
\end{align*}
after replacing $q\eta$ with $q^{1/m}$ and $Q^\beta$ with $Q^{m\beta}$ (but not in $\delta{\mb{t}}$). Here the input point $\mb{T}'(q)$ is obtained from $\mb{T}(q)$ by replacing $Q^\beta$ with $Q^{\beta/m}$, and it belongs to the sector labeled by 1. The tangent vector $\delta\ca{J}^{st,R,l}(\delta\mb{t},\mb{T})$ belongs to the sector labeled by $g^{-1}$.

Now let us study the stem theory of level $l$ using the formalism of twisted cohomological GW theory of $X\times B\bb{Z}_m$. It follows from the definition that the stem theory of level $l$ is obtained from the untwisted cohomological Gromov-Witten theory of $X/B\bb{Z}_m$ by twisting the following classes:
\begin{equation}\label{eq:stemtwist1}
\text{td}(T_{\overline{\ca{M}}})/\text{ch}\big(\text{tr}_g\big(\wedge^*N_{\overline{\ca{M}}}^\vee\big)\big)
\end{equation}
and
\begin{equation}\label{eq:stemtwist2}
\text{ch}\big(\text{tr}_g\,\ca{D}^{R,l}\big).
\end{equation}
The trace in the first twisting class (\ref{eq:stemtwist1}) is computed in \cite[\textsection{8}]{givental2}, and the effects of this twisting class on the Lagrangian cone and the genus zero potential of the untwisted theory are also studied in \cite[\textsection{8}]{givental2}. We summarize them in the following proposition.
\begin{proposition}[\hspace{-0.0001 cm}\cite{givental2}]\label{sumofeffects}
The effects of the twisting class (\ref{eq:stemtwist1}) on the Lagrangian cone and the genus zero potential of the untwisted theory are described as follows:
\begin{enumerate}[(i)]
\item The sectors labeled by 1 and $g^{-1}$ are rotated by the operators $\Box_m$ and $\Box_\eta$, respectively. These two operators are the Euler-Maclaurin asymptotics of the infinite products
\begin{align*}
&\Box_m\sim\prod_i\bigg(\sqrt{\frac{x_i}{1-e^{-mx_i}}}\prod_{r=1}^\infty\frac{x_i-rz}{1-e^{-mx_i+rmz}}\bigg),\\
&\Box_\eta\sim\prod_i\bigg(\sqrt{\frac{x_i}{1-e^{-x_i}}}\prod_{r=1}^\infty\frac{x_i-rz}{1-\eta^{-r}e^{-x_i+rz/m}}\bigg),
\end{align*}
where $x_i$ are the Chern roots of the tangent bundle $T_X$.
\item
The dilaton shift changes from $-z$ to $1-q^m$.
\item There are changes of polarizations of symplectic loop spaces. More precisely, in the sector labeled by $1$, the negative space of the polarization is spanned by
\[
\phi^a\Psi^m(q^k/(1-q)^{k+1}),
\]
whereas in the sector labeled by $g^{-1}$, it is spanned by
\[
\phi^aq^{k/m}/(1-q^{1/m})^{k+1}.
\]
\end{enumerate}
\end{proposition}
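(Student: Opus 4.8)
Since Proposition \ref{sumofeffects} only restates results of Givental \cite{givental2}, the plan is to recall the structure of his argument as it applies to the stem spaces; no new input is needed beyond checking that the geometry of stems is the one described above. The first step is to write down explicitly the virtual tangent bundle $T_{\overline{\ca{M}}}$ and the virtual normal bundle $N_{\overline{\ca{M}}}$ of the stem space $\overline{\ca{M}}$, viewed as a Kawasaki stratum inside $\overline{\ca{M}}_{0,mk+1}(X,m\beta)$, together with the action of the generator $g$ of $\bb{Z}_m$. Using the decomposition of the virtual tangent bundle of a moduli of stable maps into (i) deformations of the map, (ii) deformations of the complex structure and markings of the source, and (iii) smoothings of the nodes --- the decomposition recalled in (\ref{eq:decompoftangent}) --- one separates the $g$-fixed part, which is the virtual tangent bundle to the stem theory (i.e.\ to $\overline{\ca{M}}^{X/\bb{Z}_m,\beta}_{0,k+1}(g,1,\dots,1)$), from the moving part $N_{\overline{\ca{M}}}$. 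Along the chain of $\bb{P}^1$'s covering the stem curve and at the butt, the $g$-eigenvalues are explicit powers of $\eta$, so the map-deformation spaces contribute, for each Chern root $x_i$ of $T_X$, a family of line bundles with $g$-weights dictated by the combinatorics of the $m$-fold cover, while the node-smoothing terms at unbalanced nodes are the source of the half-integer powers of cotangent lines that produce the $L^{1/m}$ appearing in the stem correlators.

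Once the trace $\text{tr}_g\big(\text{td}(T_{\overline{\ca{M}}})/\text{ch}(\text{tr}_g\wedge^*N_{\overline{\ca{M}}}^\vee)\big)$ has been written as a product over the $x_i$ of Todd-type factors and geometric-series-type factors carrying $g$-weights, the second step is to feed this into the Coates--Givental quantum Riemann--Roch formalism \cite{coates} --- the same mechanism used in the proof of Proposition \ref{faketotwist}: a multiplicative twisting by a class of this shape modifies the Lagrangian cone of the untwisted cohomological theory of $X\times B\bb{Z}_m$ by the loop-group element given by the Euler--Maclaurin (stationary-phase) asymptotics of the corresponding infinite product. Carrying this out in the sector labeled $1$, where the contributions are organized on the quotient curve and the Novikov variables are rescaled by $\Psi^m$ --- the source of the scalings $mx_i$, the shifts $rmz$, and the prefactor $\sqrt{x_i/(1-e^{-mx_i})}$ --- and in the sector labeled $g^{-1}$, where the cotangent line at the butt carries weight $\eta^{-1}$ --- the source of the factors $1-\eta^{-r}e^{-x_i+rz/m}$ and $\sqrt{x_i/(1-e^{-x_i})}$ --- produces the operators $\Box_m$ and $\Box_\eta$ of part (i). For part (ii), applying the same asymptotics to the Todd class of the summand $-\pi_*(L_{k+1}^\vee-1)$ in (\ref{eq:decompoftangent}) yields, rather than a rotation of the cone, the replacement of the cohomological dilaton shift $-z$ by the $K$-theoretic dilaton shift $1-q^m$, exactly as in the non-equivariant computation of \cite{coates3}. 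For part (iii), the node-smoothing summand $-(\pi_*i_*\ca{O}_{\ca{Z}})^\vee$ is responsible for the change of polarization: since the $\bb{Z}_m$-twist forces the poles of the stem correlators to lie in $1-q^m$ in the sector labeled $1$ and in $1-q^{1/m}$ in the sector labeled $g^{-1}$, the negative subspaces of the two polarizations become the spans of $\phi^a\Psi^m\big(q^k/(1-q)^{k+1}\big)$ and of $\phi^aq^{k/m}/(1-q^{1/m})^{k+1}$, respectively.

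I expect the one genuinely delicate point to be the equivariant bookkeeping in the first step: identifying, component by component along the chain $C_+$ and at the butt, the $g$-eigenvalues on every deformation and obstruction space, and then checking that after the $\bb{Z}_m$-quotient the surviving trace reorganizes into exactly the infinite products of part (i) --- including the square-root prefactors and the asymmetry between the two distinguished sectors. This is the technical heart of \cite[\textsection 8]{givental2}. Since the proposition concerns only the twisting class (\ref{eq:stemtwist1}), which involves no determinant line bundle, Givental's computation can be used verbatim; the level twisting (\ref{eq:stemtwist2}) enters as an independent multiplicative factor and is handled separately in the remaining steps of the proof of Theorem \ref{levelladele}.
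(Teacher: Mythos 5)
Your outline is correct and matches the paper's treatment: the paper gives no independent proof of Proposition \ref{sumofeffects} but simply summarizes the computation of \cite[\S 8]{givental2}, and your sketch (trace of the virtual tangent/normal bundles of the stem stratum, quantum Riemann--Roch via Euler--Maclaurin asymptotics for part (i), and the attribution of the dilaton shift and polarization changes to the second and third summands of (\ref{eq:decompoftangent})) is precisely the structure of that cited argument. Your closing observation that the level twisting (\ref{eq:stemtwist2}) is an independent multiplicative factor handled separately is also exactly how the paper proceeds.
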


We compute the second twisting class (\ref{eq:stemtwist2}), and describe its effect on the Lagrangian cone. Let $p$ be the universal family of stem curves. By abuse of notation, we still use $\text{ev}$ to denote the universal evaluation morphism from the universal family of quotient curves to $X/\bb{Z}_m$. Let $\bb{C}_{\eta^i}$ be the topologically trivial line bundle on $X/\bb{Z}_m$ on which $g$ acts as multiplication by $\eta^i$. According to a simple argument in \cite{tonita3}, the trace of the index bundle $R\pi_*(\text{ev}^*\ca{R})$ can be expressed as
\[
\text{tr}_g\big(R\pi_*(\text{ev}^*\ca{R})\big)=\sum_{i=0}^{m-1}\eta^i\,Rp_*(\text{ev}^*\,\ca{R}\otimes\bb{C}_{\eta^i}).
\]
For simplicity, we denote $Rp_*(\text{ev}^*\,\ca{R}\otimes\bb{C}_{\eta^i})$ by $\bar{\ca{R}}_i$. Then we have
\begin{align}
\text{ch}\,\text{tr}_g\,\ca{D}^{R,l}&=\text{ch}\,\bigg(\text{det}\,\sum_{i=0}^{m-1}\eta^i\,\bar{\ca{R}}_i\bigg)^{-l}\nonumber\\
&=\text{ch}\,\bigg(\prod_{i=0}^{m-1}\big(\eta^i\big)^{\text{ch}_0\,\bar{\ca{R}}_i }\prod_{i=0}^{m-1}\text{det}\,\bar{\ca{R}}_i\bigg)^{-l}\nonumber\\
&=\prod_{i=0}^{m-1} \text{exp}\bigg(-l\bigg(i\,\text{log}(\eta)\,\text{ch}_0\,\bar{\ca{R}}_i +\text{ch}_1\,\bar{\ca{R}}_i \bigg)\bigg)\label{eq:chdeter}
\end{align}

\begin{proposition}
Twisting by the class (\ref{eq:stemtwist2}) rotates the sector labeled by the identity of the Lagrangian cone of $X\times B\bb{Z}_m$ by 
\[
D_m:=\emph{exp}\bigg(-ml\bigg(\frac{\emph{ch}_2\,\ca{R}}{z}\bigg)\bigg)
\] 
The sector labeled by $g^{-1}$ is rotated by the same operator $D_m$.
\end{proposition}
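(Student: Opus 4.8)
The plan is to mimic the proof of Proposition \ref{faketotwist}, but now for the Gromov--Witten theory of the orbifold $X\times B\bb{Z}_m$ and using the orbifold quantum Riemann--Roch theorem of \cite{tseng2} (the orbifold counterpart of the twisting formula of \cite{coates} used in Proposition \ref{faketotwist}). The starting point is the explicit formula \eqref{eq:chdeter}: it presents the twisting class \eqref{eq:stemtwist2} as a product
\[
\text{ch}\,\text{tr}_g\,\ca{D}^{R,l}=\prod_{i=0}^{m-1}\exp\Big(-l\big(i\log(\eta)\,\text{ch}_0\,\bar{\ca{R}}_i+\text{ch}_1\,\bar{\ca{R}}_i\big)\Big),\qquad \bar{\ca{R}}_i=Rp_*\big(\widetilde{\text{ev}}^*(\ca{R}\boxtimes\bb{C}_{\eta^i})\big),
\]
so twisting by \eqref{eq:stemtwist2} is a composition of $m$ twistings, one for each index bundle $\bar{\ca{R}}_i$, by the multiplicative class $\exp(-l\,\text{ch}_1(\,\cdot\,))$, together with the locally constant scalar factors $\exp(-li\log(\eta)\,\text{ch}_0\,\bar{\ca{R}}_i)$.

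First I would dispose of the scalar factors. The rank $\text{ch}_0\,\bar{\ca{R}}_i$ is locally constant on the stem space and, by orbifold Riemann--Roch on the stem curve, is determined by the degree $\beta$ (and by $i$ and $m$); hence $\prod_i\exp(-li\log(\eta)\,\text{ch}_0\,\bar{\ca{R}}_i)$ multiplies the degree-$\beta$ part of every correlator by a fixed nonzero constant. This is the automorphism $Q^\beta\mapsto c^\beta Q^\beta$ of the Novikov ring, which preserves the Lagrangian cone, so it contributes nothing to the loop-group element being sought.

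Next comes the main term $\prod_i\exp(-l\,\text{ch}_1\,\bar{\ca{R}}_i)$. I would apply the orbifold quantum Riemann--Roch theorem of \cite{tseng2} to each $\bar{\ca{R}}_i=Rp_*\widetilde{\text{ev}}^*(\ca{R}\boxtimes\bb{C}_{\eta^i})$ with the single nonzero twisting parameter $s_1=-l$. On the component of the Givental space of $X\times B\bb{Z}_m$ labelled by $v\in\bb{Z}_m$, the bundle $\ca{R}\boxtimes\bb{C}_{\eta^i}$ lies in a single eigenspace of $v$ of some age $a$; since $\ca{R}$ carries the trivial $\bb{Z}_m$-action, its Chern character on every inertia component is just $\text{ch}(\ca{R})$, and the age $a$ enters the Euler--Maclaurin asymptotics only through the coefficients of the \emph{nonnegative} powers of $z$ (the Bernoulli polynomials evaluated at $a$), never through the coefficient of $z^{-1}$, which is the constant $B_0\equiv1$. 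Therefore each factor contributes $\exp(-l\,\text{ch}_2(\ca{R})/z)$ to the $z^{-1}$-part of the loop-group operator, \emph{independently of $v$}, and the product over $i=0,\dots,m-1$ contributes
\[
D_m=\exp\Big(-ml\,\frac{\text{ch}_2(\ca{R})}{z}\Big)
\]
to the $z^{-1}$-part on every sector, in particular on the identity sector and on the $g^{-1}$ sector. (Conceptually, $\sum_i\bar{\ca{R}}_i=R\tilde{p}_*\widetilde{\text{ev}}^*\ca{R}$ is the index bundle of $\ca{R}$ on the universal $m$-fold covering curve, i.e.\ the pullback of the level-$l$ index bundle from $\overline{\ca{M}}(X,m\beta)$; since the cotangent line of the cover is $L^{1/m}$ rather than $L$, Proposition \ref{faketotwist} applied downstairs on $X$ yields $\exp(-l\,\text{ch}_2(\ca{R})/z_{\mathrm{cover}})$, which in the stem variables is exactly $D_m$ --- a formula that is visibly blind to the sector, in contrast to the normal-bundle twist \eqref{eq:stemtwist1}, whose operators $\Box_m,\Box_\eta$ of Proposition \ref{sumofeffects} genuinely differ because $N_{\overline{\ca{M}}}$ depends on how the stem sits inside $\overline{\ca{M}}(X,m\beta)$.) Finally, the remaining nonnegative powers of $z$ in the operator are multiplications by functions of $z$ with cohomology-class coefficients, under which the overruled cone is invariant; together with the Novikov rescaling above and the change-of-frame data already recorded in Proposition \ref{sumofeffects}, they do not alter the claimed rotation.

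The step I expect to be the main obstacle is the careful use of orbifold quantum Riemann--Roch in the twisted sectors: one must (i) identify the age decomposition of $\ca{R}\boxtimes\bb{C}_{\eta^i}$ on each inertia component of $X\times B\bb{Z}_m$, (ii) confirm that the asymptotics produces the coefficient $B_0\equiv1$ in front of $z^{-1}$ regardless of the age, and (iii) track the $z\geq 0$ corrections precisely enough to be certain they are absorbed into the overruled-cone invariance and the dilaton-shift and polarization changes of Proposition \ref{sumofeffects} --- equivalently, that they hide no further $z^{-1}$ contribution. Matching conventions between the stem variables $q,L$ and the covering-curve variables $q^{1/m},L^{1/m}$ is the other place where care is needed.
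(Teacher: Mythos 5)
Your proposal follows essentially the same route as the paper: factor the twisting class as in (\ref{eq:chdeter}), feed each factor through Tseng's orbifold quantum Riemann--Roch theorem, observe that the coefficient of $z^{-1}$ coming from the $s_1=-l$ twist is $(A_0)_2=B_0\cdot\mathrm{ch}_2\,\ca{R}=\mathrm{ch}_2\,\ca{R}$ regardless of the sector (because $B_0\equiv1$ and $\ca{R}\boxtimes\bb{C}_{\eta^i}$ is topologically $\ca{R}$ on every inertia component), and multiply the $m$ factors to obtain $D_m$ on both the identity and the $g^{-1}$ sectors. The one genuine divergence is your treatment of the rank factors $\exp(-li\log(\eta)\,\mathrm{ch}_0\,\bar{\ca{R}}_i)$: the paper runs these through QRR as an $s_0$-twist, which produces terms $-li\log(\eta)\,\mathrm{ch}_1\ca{R}/z$, and then kills their product using $m\log(\eta)\equiv0$; you instead compute the rank by orbifold Riemann--Roch and absorb the product into a Novikov rescaling $Q^\beta\mapsto c^\beta Q^\beta$ plus an overall constant. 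That is a legitimate alternative (and arguably more transparent), since by the divisor equation such a rescaling preserves the overruled cone.

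The one step that would not survive as written is your blanket claim that ``the remaining nonnegative powers of $z$ in the operator are multiplications by functions of $z$ with cohomology-class coefficients, under which the overruled cone is invariant.'' Overruled-cone invariance only covers multiplication by \emph{scalar} series in $z$; a factor $\exp(c\,\mathrm{ch}_1\ca{R}\cdot z^0)$ with $\mathrm{ch}_1\ca{R}\in H^2$ is a nontrivial loop-group element and does not preserve the cone. Each QRR factor does produce such a $z^0$ term, namely $-l\big(B_1(r/m)\,\mathrm{ch}_1\ca{R}+\tfrac{1}{2}\mathrm{ch}_1\,E^{(0)}\big)$, and one must check that these cancel after taking the product over $i=0,\dots,m-1$ (they do, using $\sum_{i=0}^{m-1}B_1(i/m)=-\tfrac12$; this is why the paper's explicit expressions for $\tilde{D}_\eta$ and $\tilde{D}_m$ retain only scalar multiples of $\mathrm{ch}_0\,\ca{R}$ besides the $\mathrm{ch}_2\ca{R}/z$ term). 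You flag this verification as the main obstacle, so it is a deferred computation rather than a wrong idea, but the justification you currently give for discarding those terms is incorrect and the cancellation must actually be carried out.
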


\begin{proof}
The proof is based on the orbifold quantum Riemann-Roch theorem developed in \cite{tseng2}. Let $E$ be an orbifold vector bundle over $X\times B\bb{Z}_m$. Consider a general twisting class 
\[
\text{exp}\bigg(\sum_{j\geq0}s_j\,\text{ch}_j\,p_*\big(\text{ev}^*\,E\big)\bigg).
\]
According to \cite[Theorem 1]{tseng2}, it corresponds to the rotation by the following operator
\[
\text{exp}\bigg(\sum_{j\geq0}s_j\,\bigg(\sum_{n\geq0}\frac{(A_n)_{j+1-n}z^{n-1}}{n!}+\frac{\text{ch}_j\,E^{(0)}}{2}\bigg)\bigg).
\]
Here $A_n$ is an operator which acts on all sectors. The restriction $(A_n)|_{X,g^i}$ of $A_n$ to the sector labeled by $g^i$ is defined by 
\[
(A_n)|_{(X,g^i)}=\sum_{r=0}^{m-1}B_n\big(\frac{r}{m}\big)\text{ch}\, E^{(r)}_i,
\]
where $E_i^{(r)}$ (respectively $E^{(0)}$) is the subbundle of the restriction of $E$ to $(X,g^i)$ on which $g^i$ acts with eigenvalue $e^{2\pi i r/m}$ (respectively 1). The notation $(A_n)_j$ denotes the degree $j$ component of the operator $A_n$.
The Bernoulli polynomials are defined by
\[
\sum_{n\geq0}B_n(x)\frac{t^n}{n!}=\frac{te^{tx}}{e^t-1}.
\]

In our case, the twisting class is given by (\ref{eq:chdeter}). Let $\tilde{D}_{\eta,i}$ denote the symplectic transformations corresponding to the $i$-th factor of the twisting class (\ref{eq:chdeter}), restricted to $(X,g^{-1})$. For each $i\in\{0,\dots,m-1\}$, the operator $A_n$ in the definition of $\tilde{D}_{\eta,i}$ is given by
\[
(A_n)|_{(X,g^{-1})}=B_n\big(\frac{i}{k}\big)\,\text{ch}\,\ca{R}.
\]
By the orbifold quantum Riemann-Roch theorem, the operator $\tilde{D}_{\eta,i}$ equals
\[
\text{exp}\bigg(-l\,\text{log}(\eta)\bigg(\frac{\text{ch}_1\ca{R}}{z}+B_1\big(\frac{i}{m}\big)\,\text{ch}_0\ca{R}\bigg)-l\bigg(\frac{\text{ch}_2\ca{R}}{z}+B_1(\frac{i}{m})\,\text{ch}_1\,\ca{R}+\frac{B_2(i/m)\,\text{ch}_0\ca{R}}{2}z\bigg)-l\frac{\text{ch}_1\ca{R}}{2}\bigg)
\]
Let $\tilde{D}_\eta=\prod_{i=0}^{m-1}\tilde{D}_{\eta,i}$. To simplify the expression of $\tilde{D}_\eta$, we use the fact that $n\,\text{log}(\eta)=0$ if $n$ is an integer divisible by $m$. Keeping this in mind, we obtain 
\[
\tilde{D}_\eta=\text{exp}\bigg(-l\bigg(\frac{m\,\text{ch}_2\,\ca{R}}{z}+\frac{\text{ch}_0\,\ca{R}}{12\,m}z+\frac{\text{ch}_0\,\ca{R}}{6}\,\text{log}(\eta)\bigg)\bigg).
\]
Note that the factor $\text{exp}(-l(z\,\text{ch}_0\,\ca{R}/(12\,m)+\text{log}(\eta)\,\text{ch}_0\,\ca{R}/6))$ in $\tilde{D}_\eta$ is a scalar $z$-series and thus it preserves the overruled Lagrangian cone. We can drop it and obtain the operator $D_m$.

For the sector labeled by the identity, we denote by $\tilde{D}_{m,i}$ the restriction of the operator corresponding to the $i$-th factor of the twisting class (\ref{eq:chdeter}). It is easy to check that
\[
(A_n)|_{(X,1)}=B_n(0)\,\text{ch}\,\ca{R},
\]
and the operator $\tilde{D}_{m,i}$ equals
\[
\text{exp}\bigg(-il\,\text{log}(\eta)\bigg(\frac{\text{ch}_1\ca{R}}{z}\bigg)-ml\bigg(\frac{\text{ch}_2\ca{R}}{z}+\frac{\text{ch}_0\,\ca{R}}{12}z\bigg)\bigg).
\]
Let $\tilde{D}_m=\prod_{i=0}^{m-1}\tilde{D}_{m,i}$. Again by using the fact that $n\,\text{log}(\eta)=0$ if $m|n$, we can simplify the operator $\tilde{D}_m$ to
\[
\text{exp}\,\bigg(-ml\bigg(\frac{\text{ch}_2\ca{R}}{z}+\frac{\text{ch}_0\,\ca{R}}{12}z\bigg)\bigg).
\]
We can drop the second term in the exponent because it is a constant $z$-series.

\end{proof}

The above discussion can be summarized in the following proposition. 
\begin{proposition}\label{identifytangent}
$\emph{qch}\,\delta\ca{J}^{st,R,l}(\delta\mb{t},\mb{T})$ lies in the tangent space $\Box_\eta\,\Box^{-1}_m\big(\ca{T}_{\ca{I}^{tw}}\,\Box_m\,D_m\,\ca{L}_H\big)$ to the cone of the stem theory of level $l$ at a certain point $\ca{I}^{tw}$. The input $\mb{T}$ satisfies 
\[
\emph{qch}\big(1-q^m+\mb{T}(q)\big)=\big[\ca{I}^{tw}\big]_+,
\]
where $[\cdots]_+$ denotes the projection along the negative space of the polarization of the sector labeled by 1.
\end{proposition}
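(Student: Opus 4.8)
The plan is to obtain the statement by combining the three inputs assembled above: Proposition~\ref{identifystem}, which rewrites the Laurent tail $\ca{J}^{R,l}_{S_\infty}(\mb{t})_{(\eta)}$ as the generating series $\delta\ca{J}^{st,R,l}$ in the stem theory of level $l$; Proposition~\ref{sumofeffects}, which records the loop-group transformation induced by the twisting class (\ref{eq:stemtwist1}); and the preceding proposition, which computes the transformation $D_m$ induced by the level twisting class (\ref{eq:stemtwist2}). What is left is a bookkeeping of sectors, dilaton shifts and polarizations, entirely parallel to \cite[\textsection 8]{givental2} and \cite[\textsection 5]{tonita3}.

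First I would observe that $\delta\ca{J}^{st,R,l}(\delta\mb{t},\mb{T})$ has the shape of a big $J$-function of the stem theory of level $l$ with one insertion linearized: the input $\mb{T}$, inserted $k$ times in the sector labeled by $1$, determines a point $\ca{I}^{tw}$ on the cone $\ca{L}^{st,R,l}$ of that theory via $(\text{dilaton shift})+\mb{T}=[\ca{I}^{tw}]_+$, while the single linearized insertion $\delta\mb{t}$, lying in the sector labeled by $g^{-1}$, sweeps out the tangent space to $\ca{L}^{st,R,l}$ at $\ca{I}^{tw}$ as $\delta\mb{t}$ ranges over the positive part of that sector's polarization. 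This is the standard relationship between the big $J$-function and the ruling of its overruled Lagrangian cone; the level twisting class $\emph{ch}(\emph{tr}_g\,\ca{D}^{R,l})$ does not disturb it, since $\ca{L}^{st,R,l}$ is still an overruled Lagrangian cone of a twisted cohomological GW theory of $X\times B\bb{Z}_m$ (Coates--Givental, Tseng), and the ``correct'' splitting of $\ca{D}^{R,l}$ over nodal strata (Proposition~\ref{cutedge}) keeps the string and dilaton structure of the correlators intact. Matching Novikov variables $Q^\beta\leftrightarrow Q^{m\beta}$ relates the input $\mb{T}$ of the statement to the input $\mb{T}'$ used in the definition of $\delta\ca{J}^{st,R,l}$.

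Next I would identify the two sector descriptions of $\ca{L}^{st,R,l}$. Because the stem theory of level $l$ is the cohomological GW theory of $X\times B\bb{Z}_m$ twisted by (\ref{eq:stemtwist1}) and (\ref{eq:stemtwist2}), its Lagrangian cone is obtained from $\ca{L}_H$ by composing the loop-group transformations of Proposition~\ref{sumofeffects} with $D_m$. The operators $\Box_m$, $\Box_\eta$ and $D_m$ all act by pointwise multiplication on $H^{even}(X)$ by (matrix-valued) $z$-series built from the Chern roots of $T_X$ and from $\emph{ch}(\ca{R})$, hence they commute. Consequently, in the sector labeled by $1$ the cone equals $\Box_m D_m\,\ca{L}_H$, with dilaton shift $1-q^m$ and the polarization whose negative part is spanned by $\phi^a\Psi^m(q^k/(1-q)^{k+1})$; in the sector labeled by $g^{-1}$ it equals $\Box_\eta D_m\,\ca{L}_H$, with dilaton shift and polarization as in Proposition~\ref{sumofeffects}. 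In particular there is a unique $\mb{f}\in\ca{L}_H$ with $\ca{I}^{tw}=\Box_m D_m\,\mb{f}$, and the relation $\emph{qch}(1-q^m+\mb{T}(q))=[\ca{I}^{tw}]_+$ is exactly the projection onto the positive part in the identity sector, under $z=\emph{log}\,q$.

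Finally I would transport the tangent space between the two descriptions. Writing $T_{\mb{f}}\ca{L}_H$ for the tangent space to $\ca{L}_H$ at $\mb{f}$, the tangent space to $\ca{L}^{st,R,l}$ at $\ca{I}^{tw}$ read in the $g^{-1}$ sector is $\Box_\eta D_m\,(T_{\mb{f}}\ca{L}_H)$, whereas read in the identity sector it is $\ca{T}_{\ca{I}^{tw}}(\Box_m D_m\,\ca{L}_H)=\Box_m D_m\,(T_{\mb{f}}\ca{L}_H)$. Eliminating $D_m\,(T_{\mb{f}}\ca{L}_H)$ between these gives
\[
\Box_\eta D_m\,(T_{\mb{f}}\ca{L}_H)=\Box_\eta\,\Box_m^{-1}\,\ca{T}_{\ca{I}^{tw}}(\Box_m D_m\,\ca{L}_H),
\]
which is exactly the subspace asserted in the statement; by the first step, $\emph{qch}\,\delta\ca{J}^{st,R,l}(\delta\mb{t},\mb{T})$ lies in it. I expect the main obstacle to be the first step: checking carefully that the level twisting leaves intact the overruled-Lagrangian-cone and string/dilaton properties that let one read $\delta\ca{J}^{st,R,l}$ as a tangent vector, and that the dilaton shifts and polarizations of the two sectors are correctly intertwined by $\Box_\eta\Box_m^{-1}$; once commutativity of $\Box_m$, $\Box_\eta$ and $D_m$ is established, the remaining manipulations are formal.
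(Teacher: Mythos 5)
Your proposal is correct and follows essentially the same route as the paper: the paper's proof simply invokes the argument of Tonita's Proposition 5.8 and notes that the dilaton shift $1-q^m$ from Proposition \ref{sumofeffects}(ii) explains the relation $\mathrm{qch}(1-q^m+\mb{T}(q))=[\ca{I}^{tw}]_+$, which is exactly the sector/dilaton-shift/polarization bookkeeping you carry out explicitly, together with the commutation of the pointwise-multiplication operators $\Box_m$, $\Box_\eta$, $D_m$ that lets you rewrite $\Box_\eta D_m(T_{\mb{f}}\ca{L}_H)$ as $\Box_\eta\Box_m^{-1}\ca{T}_{\ca{I}^{tw}}(\Box_m D_m\ca{L}_H)$.
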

\begin{proof}
The argument of \cite[Proposition 5.8]{tonita3} applies here. We briefly explain the relation between the application point $\ca{I}^{tw}$ and the input $\mb{T}$. Note that the application point $\ca{I}^{tw}$ lies on the Lagrangian of the stem theory of level $l$ in the sector labeled by 1. According to Proposition \ref{sumofeffects} (ii), the new dilaton shift is $1-q^m$. This explains the equality in the proposition.
\end{proof}

To prove the third condition in Theorem \ref{levelladele}, we need to identify $\ca{T}_{\ca{I}^{tw}}\Box_mD_m\ca{L}_H$ with $\ca{T}_m\big(\ca{J}_{S_\infty}(\mb{t})_{(1)}\big)$. We first show that
\begin{proposition}\label{psim}
\[
\emph{qch}^{-1}\big(\Box_m\,D_m\,\ca{L}_H\big)=\widetilde{\Psi}^m\ca{L}_{fake}^{R,l}.
\]
Here, the Adams operation $\widetilde{\Psi}^{m}$ acts on $K$-theory classes of $X$ and $q$ by $\Psi^{m}(q)=q^m$, but not on the Novikov variables $Q$.
 
\end{proposition}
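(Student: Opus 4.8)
The plan is to push the statement to the cohomological side via $\text{qch}$ and assemble it from three facts already at our disposal: the Coates--Givental identity $\text{qch}(\ca{L}_{fake})=\Delta\,\ca{L}_H$ recalled above (from \cite{coates2}), Proposition \ref{faketotwist}, and the level-$0$ case of the present statement, which is contained in Givental's analysis of stem spaces \cite[\textsection 8]{givental2} and its $K$-theoretic reformulation in \cite{tonita3}. Throughout I identify $\ca{K}^1$ with $\ca{H}$ via $\text{qch}$, so $z=\log q$, and I write $\mathrm{Dil}_m$ for the operator on $\ca{H}$ induced by $\widetilde{\Psi}^m$: by the standard identity $\text{ch}\circ\Psi^m=\mathrm{Dil}_m\circ\text{ch}$ it multiplies a class in $H^{2k}(X)$ by $m^{k}$, substitutes $z\mapsto mz$, and leaves the Novikov variables unchanged. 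Since $\widetilde{\Psi}^m$ is a ring homomorphism, $\mathrm{Dil}_m$ is an algebra homomorphism of $\ca{H}$ for the pointwise product; in particular it distributes over multiplication by pointwise-multiplication loop-group operators such as $\Delta$, $\Box_m$ and $D_m$.

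The first step is the elementary computation of $\mathrm{Dil}_m$ on the level-$l$ twisting factor. By Proposition \ref{faketotwist}, $\text{qch}(\ca{L}^{R,l}_{fake})=\exp\!\big(-l\,\text{ch}_2\,\ca{R}/z\big)\,\Delta\,\ca{L}_H$. As $\text{ch}_2\,\ca{R}\in H^{4}(X)$ is multiplied by $m^{2}$ while $1/z$ becomes $1/(mz)$, we get $\mathrm{Dil}_m\!\big(\exp(-l\,\text{ch}_2\,\ca{R}/z)\big)=\exp\!\big(-ml\,\text{ch}_2\,\ca{R}/z\big)=D_m$. Applying $\mathrm{Dil}_m$ and distributing it over the pointwise products gives
\[
\text{qch}\big(\widetilde{\Psi}^m\ca{L}^{R,l}_{fake}\big)=\mathrm{Dil}_m\!\big(\exp(-l\,\text{ch}_2\,\ca{R}/z)\big)\cdot\mathrm{Dil}_m\big(\Delta\,\ca{L}_H\big)=D_m\cdot\mathrm{Dil}_m\big(\Delta\,\ca{L}_H\big).
\]
Hence the proposition is equivalent to the level-$0$ identity $\mathrm{Dil}_m(\Delta\,\ca{L}_H)=\Box_m\,\ca{L}_H$, i.e. $\text{qch}^{-1}(\Box_m\,\ca{L}_H)=\widetilde{\Psi}^m\ca{L}_{fake}$, together with the observation that $D_m$ and $\Box_m$ commute, both being pointwise-multiplication operators. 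Granting these, $\text{qch}(\widetilde{\Psi}^m\ca{L}^{R,l}_{fake})=D_m\,\Box_m\,\ca{L}_H=\Box_m\,D_m\,\ca{L}_H$, which is the claim.

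For the level-$0$ identity I would invoke \cite[\textsection 8]{givental2} and \cite{tonita3}, where it appears as the identification of the sector-$1$ cone of the untwisted stem theory with $\widetilde{\Psi}^m\ca{L}_{fake}$: there the $\bb{Z}_m$-quotient structure on stem curves produces precisely the dilation $z\mapsto mz$ on cotangent lines (the relation between $L$ and $L^{1/m}$ of \cite[\textsection 7]{givental2}) and on the $K$-theoretic insertions, while the Euler--Maclaurin asymptotics of the trace of $\wedge^{*}N^{\vee}$ over the normal bundle to the stem stratum supply precisely the loop-group factor that turns $\mathrm{Dil}_m(\Delta\,\ca{L}_H)$ into $\Box_m\,\ca{L}_H$; the convention that $\widetilde{\Psi}^m$ is not applied to the Novikov variables is consistent because the degrees have already been rescaled in Proposition \ref{identifystem}. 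I expect the main obstacle to be purely bookkeeping --- namely verifying that Givental's stem computation indeed yields $\mathrm{Dil}_m(\Delta\,\ca{L}_H)=\Box_m\,\ca{L}_H$, which cannot be read off from comparing the two defining infinite products alone since $\mathrm{Dil}_m$ does not preserve $\ca{L}_H$ --- but this is exactly what is carried out in \cite{givental2,tonita3}, and the only genuinely new ingredient at level $l$ is the one-line computation $\mathrm{Dil}_m(\exp(-l\,\text{ch}_2\,\ca{R}/z))=D_m$ together with the commutativity of pointwise-multiplication operators, so no new asymptotic analysis is needed. If one prefers a self-contained argument, the same conclusion can be reached on tangent spaces: express $T\ca{L}^{R,l}_{fake}$ in terms of $T\ca{L}_H$ via Proposition \ref{faketotwist}, apply $\widetilde{\Psi}^m$, and match with the tangent space to $\Box_m D_m\ca{L}_H$ using the same Todd/Bernoulli asymptotics --- this merely reproduces the computation above.
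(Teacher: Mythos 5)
Your argument is correct and is essentially the paper's own proof: both reduce to Givental's level-$0$ identity $\mathrm{qch}^{-1}(\Box_m\,\ca{L}_H)=\Psi^m\ca{L}_{fake}$ (\cite[Proposition 9]{givental2}), combine it with Proposition \ref{faketotwist}, and observe that applying the Adams operation (acting by $m^{\deg/2}$ on cohomology and $z\mapsto mz$) to the factor $\exp(-l\,\mathrm{ch}_2\,\ca{R}/z)$ produces exactly $D_m$. Your write-up merely makes explicit the distribution of the Adams operation over the pointwise-multiplication loop-group factors, which the paper leaves implicit.
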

\begin{proof}
It is proved in \cite[Proposition 9]{givental2} that $\text{qch}^{-1}(\Box_m\,\ca{L}_H)=\Psi^m\ca{L}_{fake}$. In that proof, one needs to extend the action of the Adams operator $\Psi^m$ on cohomology classes via the Chern isomorphism:
\[
\text{ch}\big(\Psi^m(\text{ch}^{-1}\,a)\big)=m^{\text{deg}(a)/2}a.
\]
By Proposition \ref{faketotwist}, we have $\ca{L}_{fake}^{R,l}=D_1\ca{L}_{fake}$. We conclude the proof by noticing that $\Psi^m(D_1)=D_m$. 
\end{proof}

Let $\widetilde{\mb{T}}(q)$ be the input point of $\ca{J}^{R,l}_{fake}(\widetilde{\mb{T}}(q))$ determined by
\[
1-q+\widetilde{\mb{T}}(q)=\big(\ca{J}^{R,l}_{S_\infty}(\mb{t})_{(1)}\big)_+.
\]
where $(\cdots)_+$ denotes the projection along $\ca{K}^1_-$ to $\ca{K}^1_+$. Let $\mb{T}'(q)$ be the input point of $\ca{I}^{tw}$ such that
\begin{equation}\label{eq:comparepoints}
\widetilde{\Psi}^m\big(\ca{J}^{R,l}_{fake}(\widetilde{\mb{T}}(q))\big)=\ca{I}^{tw}\big(\mb{T}'(q)\big).
\end{equation}
We claim that $\widetilde{\Psi}^m(\widetilde{\mb{T}}(q))=\mb{T}'(q)$. This equality holds because according to Proposition \ref{sumofeffects} and Proposition \ref{psim}, the operation $\widetilde{\Psi}^m:\ca{K}^1\rightarrow\ca{K}^1$ identifies the cone $\ca{L}^{R,l}_{fake}$ with the cone $\Box_m\,D_m\,\ca{L}_H$, the polarization of the fake quantum $K$-theory with the polarization in the sector labeled by 1 of the stem theory, and the old dilaton shift $1-q$ with the new one $1-q^m$. Therefore $\widetilde{\Psi}^m$ must also map the input point $\widetilde{\mb{T}}(q)$ of the fake $J$-function to the input point $\mb{T}'(q)$ of $\ca{I}^{tw}$. Recall from Proposition \ref{identifystem} that we have $\Psi^m(\widetilde{\mb{T}}(q))=\mb{T}(q)$. Then it follows from the definitions of $\Psi^m$ and $\widetilde{\Psi}^m$ that $\mb{T}'(q)$ is obtained from $\mb{T}(q)$ by replacing $Q^\beta$ with $Q^{\beta/m}$.

By differentiate the relation (\ref{eq:comparepoints}), we get
\begin{align*}
\widetilde{\Psi}^m&\bigg(\mb{f}(q)+\sum\frac{Q^\beta}{k!}\phi^a\bigg\langle\frac{\phi_a}{1-qL},\widetilde{\mb{T}}(L),\dots,\widetilde{\mb{T}}(L),\mb{f}(L)\bigg\rangle_{0,k+2,\beta}^{fake,R,l}\bigg)\\
&=\widetilde{\Psi}^m\mb{f}(q)+\sum\frac{Q^\beta}{k!}\widetilde{\Psi}^m\phi^a\bigg\langle\frac{\widetilde{\Psi}^m\phi_a}{1-q^mL^m},\mb{T}'(L),\dots,\mb{T}'(L),\widetilde{\Psi}^m\mb{f}(L)\bigg\rangle_{0,k+2,\beta}^{stem,R,l}.
\end{align*}
The RHS is a tangent vector in $\ca{T}_{\ca{I}^{tw}}\Box_mD_m\ca{L}_H$ along the direction of $\delta\mb{t}':=\widetilde{\Psi}^m\mb{f}(q)$. The LHS becomes $\Psi^m\circ S(q,Q)\circ \Psi^{1/m}(\delta\mb{t}')$ after we replace $Q^\beta$ with $Q^{m\beta}$ (including such a change in $\widetilde{\mb{T}}$ but excluding it in $\mb{f}(q)$). This concludes the proof of Theorem \ref{levelladele}.

\begin{remark}\label{orbmap1}
When the target $X$ is an orbifold, the adelic characterization of points on the cone $\ca{L}$ of the ordinary, i.e., permutation-\emph{non}-equivariant, quantum $K$-theory is developed in \cite{tonita2}. In this case, the Lagrangian cone $\ca{L}$ has different sectors, and each sector corresponds to a connected component of the rigidified inertia stack $\bar{I}_\mu X$. Let $f:C\rightarrow X$ be an orbifold stable map. Here $C$ is an orbifold curve with possible orbifold structures at the marked points and nodes. Let $\underline{f}:\underline{C}\rightarrow\underline{X}$ be the map between coarse moduli spaces. There is a short exact sequence
\[
1\rightarrow K\rightarrow \text{Aut}(f)\rightarrow\text{Aut}(\text{\underline{f}})\rightarrow1.
\]
The kernel $K$ consists of automorphisms of $C\rightarrow X$ that fix $\underline{C}\rightarrow \underline{X}$. These automorphisms are referred to as ``ghost automorphisms'' in \cite{abramovich5}, and they arise from the stacky nodes of the source curve. 

To analyze poles of $K$-theoretic $J$-functions in the orbifold setting, we still apply the virtual KRR formula to the moduli space of orbifold stable maps. In this case, there are extra contributions from twisted sectors corresponding to ghost automorphisms. The key observation in \cite{tonita2} is that once we add the appropriate contributions from ghost automorphisms in the definition of fake $K$-theoretic GW invariants, the formalism of adelic characterizations carries over to the orbifold setting. We refer the reader to \cite[Definition 3.1]{tonita2} for the precise definition of fake $K$-theoretic invariants and \cite[Theorem 4.1]{tonita2} for the adelic characterization in the orbifold and permutation-non-equivariant setting. We only mention that if we restrict to the untwisted sector of the cone $\ca{L}$, the main theorem in \cite{tonita2} specializes to Theorem \ref{level0adele}.

The generalization of \cite[Theorem 4.1]{tonita2} to the permutation-equivariant setting is straightforward: we only need to change the application point of the tangent space in \cite[Definition 4.3]{tonita2} from $\ca{J}_1(0)$ to the Laurent expansion $(\ca{J}_{S\infty}(\mb{t}))_1$ of the $J$-function at $q=1$. Since the determinant line bundle splits ``correctly'' among nodal strata, we can also generalize \cite[Theorem 4.1]{tonita2} to permutation-equivariant quantum $K$-theory with level structure. In this paper, we focus on recovering examples of mock theta functions. For this purpose, we only need to consider the untwisted sector of Lagrangian cones of orbifold targets. Once we make this restriction, the statement of the adelic characterization is the same as in Theorem \ref{levelladele}. 
\end{remark}

\subsection{Determinantal modification}
In this subsection, we use the adelic characterization to prove Theorem \ref{detmodify} which gives us a way to obtain points on $\ca{L}^{R,l}_{S_\infty}$ by making certain ``determinantal'' modifications to points on the level-0 cone $\ca{L}_{S_\infty}$.

Let us restate Theorem \ref{detmodify}.
\begin{theorem}\label{detmodify1}
If \[
I=\sum_{\beta\in\emph{Eff}(X)}I_\beta Q^\beta
\]
lies on $\ca{L}_{S_\infty}$, then the point
\[
I^{R,l}:=\sum_{\beta\in\emph{Eff}(X)}I_\beta Q^\beta\prod_i\big(L_i^{-\beta_i}q^{(\beta_i+1)\beta_i/2}\big)^l
\]
lies on the cone $\ca{L}_{S_\infty}^{R,l}$ of permutation-equivariant quantum $K$-theory of level $l$. Here, $\emph{Eff}(X)$ denotes the semigroup of effective curve classes on $X$, $L_i$ are the $K$-theoretic Chern roots of $\ca{R}$, and $\beta_i:=\int_\beta\,c_1(L_i)$.
\end{theorem}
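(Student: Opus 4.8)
The plan is to deduce Theorem~\ref{detmodify1} from the adelic characterization in Theorem~\ref{levelladele}. Write $\mathcal{M}$ for the operator that multiplies the $Q^\beta$-coefficient of an element of $\mathcal{K}$ by the Laurent monomial $M_\beta(q):=\prod_i\big(L_i^{-\beta_i}q^{(\beta_i+1)\beta_i/2}\big)^l$, so that $I^{R,l}=\mathcal{M}(I)$; it suffices to verify that $\mathcal{M}(I)$ satisfies requirements (1)--(3) of Theorem~\ref{levelladele}, knowing that $I$ satisfies (1)--(3) of Theorem~\ref{level0adele}. Requirement (1) is immediate: each $M_\beta(q)$ is a Laurent monomial in $q$ with coefficients in $K^0(X)\otimes\mathbb{Q}$, so $\mathcal{M}$ introduces no new poles. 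Since $M_\beta$ is regular and invertible at every root of unity, $\mathcal{M}$ descends to each local space $\mathcal{K}^\eta$, where it again acts by multiplication by the re-expanded monomials; in particular $\big(\mathcal{M}(I)\big)_{(\eta)}=\mathcal{M}_{(\eta)}\big(I_{(\eta)}\big)$ for every primitive root of unity $\eta$, including $\eta=1$. Thus requirements (2) and (3) reduce to local statements about $\mathcal{M}$ at the relevant root of unity.

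For requirement (2) I would first isolate the \emph{fake} counterpart of the theorem: $\mathcal{M}$ maps $\mathcal{L}_{fake}$ into $\mathcal{L}^{R,l}_{fake}$. Granting this, requirement (2) follows from $\big(\mathcal{M}(I)\big)_{(1)}=\mathcal{M}_{(1)}\big(I_{(1)}\big)$ and $I_{(1)}\in\mathcal{L}_{fake}$ (Theorem~\ref{level0adele}(2)). To prove the fake statement, use Proposition~\ref{faketotwist}, which gives $\mathcal{L}^{R,l}_{fake}=\exp\!\big(-l\,\mathrm{ch}_2\mathcal{R}/z\big)\mathcal{L}_{fake}$ under $z=\log q$; so it is enough to show that $\exp\!\big(l\,\mathrm{ch}_2\mathcal{R}/z\big)\circ\mathcal{M}_{(1)}$ preserves $\mathcal{L}_{fake}$. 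This is the computational heart of the argument: one expresses the twisting class $\mathrm{ch}(\mathcal{D}^{R,l})=\exp\!\big(-l\,\mathrm{ch}_1(R\pi_*\mathrm{ev}^*\mathcal{R})\big)$ via Grothendieck--Riemann--Roch for the index bundle, and then matches the $\beta$-dependent monomial $M_\beta$ with the action of the loop-group element $\exp\!\big(-l\,\mathrm{ch}_2\mathcal{R}/z\big)$ on points $\sum_\beta I_\beta Q^\beta$ of the overruled cone $\mathcal{L}_{fake}$, the remaining discrepancy being multiplication by scalar functions of $z$, which preserves any overruled cone. The splitting of the level structure over nodal strata (Proposition~\ref{cutedge}) and its shape for constant maps (Corollary~\ref{cormappt}) are what make this bookkeeping close up.

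Requirement (3) is then reduced to the fake statement by the mechanism already used in the proof of Theorem~\ref{levelladele}. For a primitive $m$-th root of unity $\eta$ the adelic ``propagator''
\[
\sqrt{\frac{\lambda_{-1}(T_X)}{\lambda_{-1}(\Psi^mT_X)}}\ \exp\sum_{i\geq1}\bigg(\frac{\Psi^iT_X^\vee}{i(1-\eta^{-i}q^{i/m})}-\frac{\Psi^{im}T_X^\vee}{i(1-q^{im})}\bigg)
\]
is built only out of $T_X$ and acts by multiplication, hence commutes with $\mathcal{M}$. Moreover, by Proposition~\ref{faketotwist} together with Definition~\ref{twistedtangent}, and using that $\mathcal{T}_m$ depends only on the ruling of $\mathcal{L}_{fake}$, one has $\mathcal{T}^{R,l}_m(\mathbf{f})=D_m\,\mathcal{T}_m\big(\exp(l\,\mathrm{ch}_2\mathcal{R}/z)\mathbf{f}\big)$ for $\mathbf{f}\in\mathcal{L}^{R,l}_{fake}$, where $D_m=\Psi^m\!\big(\exp(-l\,\mathrm{ch}_2\mathcal{R}/z)\big)=\exp\!\big(-ml\,\mathrm{ch}_2\mathcal{R}/z\big)$ is exactly the operator governing the determinantal twist of the stem theory (cf. Proposition~\ref{identifytangent}). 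Feeding in $\big(\mathcal{M}(I)\big)_{(\eta)}(q^{1/m}/\eta)=\mathcal{M}_{(\eta)}\big(I_{(\eta)}(q^{1/m}/\eta)\big)$, the known inclusion $I_{(\eta)}(q^{1/m}/\eta)\in(\text{propagator})\cdot\mathcal{T}_m\big(I_{(1)}\big)$, and commuting $\mathcal{M}$ past the propagator, requirement (3) for $\mathcal{M}(I)$ follows once one knows that $D_m^{-1}\circ\mathcal{M}_{(\eta)}$, after the substitutions $q\mapsto q^{1/m}/\eta$ and $Q^\beta\mapsto Q^{m\beta}$ of Proposition~\ref{identifystem}, preserves the tangent space $\mathcal{T}_m(I_{(1)})$ --- and this is the $\Psi^m$-conjugate of the fake statement of the previous paragraph, reflecting the $m$-fold covers underlying stem curves.

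The main obstacle is the computation in requirement (2): pinning down that the explicit $\beta$-dependent monomial $\prod_i\big(L_i^{-\beta_i}q^{(\beta_i+1)\beta_i/2}\big)^l$ is precisely how the loop-group element $\exp(-l\,\mathrm{ch}_2\mathcal{R}/z)$ of Proposition~\ref{faketotwist} acts on points $\sum_\beta I_\beta Q^\beta$ of the overruled fake cone. This will require careful use of Grothendieck--Riemann--Roch for $R\pi_*\mathrm{ev}^*\mathcal{R}$, attention to scalar $z$-series (harmless on an overruled cone), and then the parallel identity for the stem theory of level $l$ together with the verification that $\Psi^m$-conjugation is compatible with the substitutions in Proposition~\ref{identifystem}. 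By comparison, requirement (1) and the commutation of $\mathcal{M}$ with the $T_X$-propagator are routine.
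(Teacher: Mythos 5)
Your overall architecture is exactly the paper's: reduce to the adelic characterization of Theorem \ref{levelladele}, note that the modification commutes with Laurent expansion at each root of unity and with the $T_X$-propagator, use Proposition \ref{faketotwist} to identify $\ca{L}^{R,l}_{fake}$ with $\exp(-l\,\mathrm{ch}_2\ca{R}/z)\ca{L}_{fake}$, and handle condition (3) by $\Psi^m$-conjugating the condition-(2) statement. However, there is a genuine gap precisely at what you call the ``computational heart.'' You assert that the $\beta$-dependent monomial $\prod_i(L_i^{-\beta_i}q^{(\beta_i+1)\beta_i/2})^l$ differs from the action of the pointwise multiplication operator $\exp(-l\,\mathrm{ch}_2\ca{R}/z)$ only by ``multiplication by scalar functions of $z$.'' That cannot be right: a scalar $z$-series is independent of $\beta$, while the discrepancy between $\ca{M}$ and any pointwise multiplication operator is genuinely $\beta$-dependent. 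The actual bridge in the paper is the operator
\[
\Phi=\prod_{i=1}^n\exp\bigg(l\bigg(\frac{\big(f_i(p_j-zQ_j\partial_{Q_j})\big)^2}{2z}+\frac{f_i\big(p_j-zQ_j\partial_{Q_j}\big)}{2}\bigg)\bigg),
\]
a quantized quadratic Hamiltonian built from divisor classes and the Novikov derivations $Q_j\partial_{Q_j}$. One computes $\Phi(Q^\beta)=\exp\big(l(\mathrm{ch}_2\ca{R}/z+\mathrm{ch}_1\ca{R}/2)\big)\,Q^\beta\,M_\beta(q)$ directly, so that $\ca{M}=\exp(-l\,\mathrm{ch}_2\ca{R}/z)\circ(\overline{Q}^{-1}\Phi\,\overline{Q})$ up to the pairing rescaling by $(\det\ca{R})^{-l/2}$ of Convention \ref{changepairing} (which you also omit). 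The substantive input is then the lemma from Coates--Givental's proof of their quantum Riemann--Roch theorem that $\Phi$ \emph{preserves} $\ca{L}_{fake}$ --- a divisor-equation statement, not a scalar multiplication. Without naming this operator and invoking that invariance, your reduction of condition (2) does not close; Proposition \ref{cutedge} and Corollary \ref{cormappt}, which you cite, play no role here.

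The same gap propagates to condition (3). The substitution $q\mapsto q^{1/m}/\eta$ (i.e.\ $z\mapsto z/m-\log\eta$) does not simply conjugate $\Phi$ into $\Phi^m=\Psi^m(\Phi)$: it produces an additional operator $\ca{D}_\eta$ (built from $f_i(\log(\eta)Q_j\partial_{Q_j})$ and $f_i(mp_j-zQ_j\partial_{Q_j})$), and one must separately verify that $\ca{D}_\eta$ preserves $\ca{T}_m(I_{(1)})$; the paper does this by splitting $\ca{D}_\eta$ into two factors and applying Corollary 1 of Givental--Tonita to the second. Relatedly, your formula $\ca{T}^{R,l}_m(\mb{f})=D_m\,\ca{T}_m(\exp(l\,\mathrm{ch}_2\ca{R}/z)\mb{f})$ is only usable once you know that the tangent space at the modified point is $\Phi$ applied to the tangent space at $I_{(1)}$, i.e.\ $\ca{T}^{R,l}_m(\widetilde{I}^{R,l}_{(1)})=D_m\,\Phi^m\,\ca{T}_m(I_{(1)})$ --- again the operator $\Phi$ is unavoidable. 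So the proposal is a correct skeleton of the paper's argument, but the key lemma that makes both conditions (2) and (3) work is missing, and the claim substituted for it is false as stated.
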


\begin{proof}
Suppose $I=\sum_{\beta\in\text{Eff}(X)}I_\beta Q^\beta$ is a point on $\ca{L}_{S_\infty}$. Let $I^{R,l}$ be its ``determinantal'' modification $\sum_{\beta}I_{\beta} Q^\beta\prod_i\big(L_i^{-\beta_i}q^{\beta_i(\beta_i-1)/2}\big)^l$. According to Convention \ref{changepairing}, we compare different cones in the same loop spare $\ca{K}^1$. In particular, $\ca{L}^{R,l}_{fake}$ and the tangent space in Theorem \ref{levelladele} are viewed as subspaces of $\ca{K}^1$. Therefore, to show $I^{R,l}$ lies on $\ca{L}^{R,l}_{\infty}$, we need to work with the series after the rescaling
$$\widetilde{I}^{R,l}:=(\text{det}\,\ca{R})^{-l/2} I^{R,l}.$$

We denote by $I_{(\eta)}$ and $\widetilde{I}^{R,l}_{(\eta)}$ the Laurent expansions of $I$ and $\widetilde{I}^{R,l}$ in $1-q\eta$, respectively. Let $Q_1,\dots,Q_n$ be the Novikov variables. Let $p_i$ be the degree 2 cohomology classes corresponding to $Q_i$, and let $P_i=e^{-p_i}\in K^0(X)$. We denote by $L_i$ the $K$-theoretic Chern roots of $\ca{R}$. In other words, we have $\text{ch}\,L_i=e^{l_i}$, where $l_i$ are cohomological Chern roots of $\ca{R}$. We write $l_i$ as a linear function $f_i(p_1,\dots,p_n)$ in terms of the basis $p_1,\dots,p_n$.

It is clear that $\widetilde{I}^{R,l}$ satisfies the first condition in Theorem \ref{levelladele}. Now, we check the second condition. It follows from the Lemma in the proof of \cite[Theorem 2]{coates} that the operator 
\[
\Phi:=\prod_{i=1}^n\text{exp}\bigg(l\bigg(\frac{\big(f_i(p_j-zQ_j\partial_{Q_j})\big)^2}{2z}+\frac{f_i\big(p_j-zQ_j\partial_{Q_j}\big)}{2}\bigg) \bigg)
\] preserves $\ca{L}_{fake}$. Define $d_j=\langle c_1(p_j), \beta\rangle$ to be the components of the degree $\beta$. By a simple computation, one can show that
\begin{align*}
\Phi(Q^\beta)&=\prod_{i=1}^n\text{exp}\bigg(l\bigg(\frac{\big(f_i(p_j-zd_j)\big)^2}{2z}+\frac{f_i\big(p_j-zd_j\big)}{2}\bigg) \bigg)Q^\beta\nonumber\\
&=\text{exp}\bigg(l\bigg(\frac{\text{ch}_2\,\ca{R}}{2z}+\frac{\text{ch}_1\,\ca{R}}{2}\bigg)\bigg)\,Q^\beta\nonumber\\
&\cdot\prod_{i=1}^n\text{exp}\bigg(l\bigg(\frac{\big(f_i(p_j-zd_j)\big)^2}{2z}+\frac{f_i\big(p_j-zd_j\big)}{2}\bigg) \bigg)\text{exp}\bigg(-l\bigg(\frac{\text{ch}_2\,\ca{R}}{2z}+\frac{\text{ch}_1\,\ca{R}}{2}\bigg)\bigg)\nonumber\\
&=\text{exp}\bigg(l\bigg(\frac{\text{ch}_2\,\ca{R}}{z}+\frac{\text{ch}_1\,\ca{R}}{2}\bigg)\bigg)\,Q^\beta\,\prod_i\big(L_i^{-\beta_i}q^{\beta_i(\beta_i-1)/2}\big)^l,
\end{align*}
It follows that
\begin{equation}\label{eq:usefulrel}
\Phi\big(\overline{Q}\cdot I_{(1)}\big)/\overline{Q}=\text{exp}\big(l\big(\text{ch}_2\,\ca{R}/z\big)\big) \widetilde{I}_{(1)}^{R,l},
\end{equation}
where $\overline{Q}:=\prod Q_i$. Since the LHS lies on $\ca{L}_{fake}$, we conclude that the Laurent expansion of $\widetilde{I}^{R,l}$ at $q=1$ lies on the cone $\ca{L}^{R,l}_{fake}=\text{exp}\big(-l\big(\text{ch}_2\,\ca{R}/z\big)\big)\ca{L}_{fake}$. 

Now we check the third condition. Suppose the tangent space to $\ca{L}_{fake}$ at $I_{(1)}$ is given as the image of a map
\[
S(q,Q):\ca{K}^1_+\rightarrow\ca{K}^1.
\]
Then by (\ref{eq:usefulrel}), the tangent space to $\ca{L}^{R,l}_{fake}$ at $\widetilde{I}^{R,l}_{(1)}$ is given as the image of a map
\[
S'(q,Q)=\text{exp}\bigg(-l\bigg(\frac{\text{ch}_2\,\ca{R}}{z}\bigg)\bigg)\Phi\circ S(q,Q):\ca{K}^1_+\rightarrow\ca{K}^1.
\]
Here we use the fact that the Novikov variables are contained in the $\lambda$-algebra and hence they preserve tangent spaces.

Recall from Definition \ref{twistedtangent} that the space $\ca{T}_m(I_{(1)})$ is defined as the image of a map
\[
\Psi^m\circ S(q,Q)\circ\Psi^{1/m}:\ca{K}^1_+\rightarrow\ca{K}^1.
\]
Then $\ca{T}_m^{R,l}(\widetilde{I}^{R,l}_{(1)})$ is given as the image of 
\begin{align*}
\Psi^m\circ S'(q,Q)\circ\Psi^{1/m}&=\Psi^m\circ\text{exp}\bigg(-l\bigg(\frac{\text{ch}_2\,\ca{R}}{z}\bigg)\bigg)\Phi\circ S(q,Q)\circ\Psi^{1/m}\\
&=\text{exp}\bigg(-ml\bigg(\frac{\text{ch}_2\,\ca{R}}{z}\bigg)\bigg)\Psi^m\circ\Phi\circ S(q,Q)\circ\Psi^{1/m}\\
&=D_m\,\Phi^m\,\big(\Psi^m\circ S(q,Q)\circ\Psi^{1/m}\big),
\end{align*}
where $\Phi^m:=\Psi^m(\Phi)$ is given as follows
\begin{align*}
\Psi^m(\Phi)=\prod_{i=1}^n\text{exp}\bigg(l\bigg(\frac{\big(f_i(mp_j-zQ_j\partial_{Q_j})\big)^2}{2mz}+\frac{f_i\big(mp_j-zQ_j\partial_{Q_j}\big)}{2}\bigg) \bigg).
\end{align*}
Here we use the fact that the Adams operation $\Psi^m$ acts on the degree two classes $z$ and $p_j$ as multiplication by $m$, and its action on the differential operator $zQ_j\partial Q_j$ is trivial\footnote{This is because $\Psi^m(zQ_j\partial Q_j)=mzQ_j^m\partial Q_j^m=zQ_j\partial Q_j$.}. This shows that $\ca{T}_m^{R,l}(\widetilde{I}^{R,l}_{(1)})=D_m\Phi^m\big(\ca{T}_m(I_{(1)})\big)$.

By the assumption, we have
\[
I_{(\eta)}(q^{1/m}/\eta)\in\Box_\eta\Box_m^{-1}\ca{T}_m(I_{(1)}).
\]
Then
\begin{align}
\widetilde{I}^{R,l}_{(\eta)}(q^{1/m}/\eta)&=\sum_{\beta}(I_\beta)_{(\eta)}(q^{1/m}/\eta) Q^\beta\prod_i\big(L_i^{-\beta_i}q^{(\beta_i+1)\beta_i/(2m)}\eta^{-(\beta_i+1)\beta_i/2}\big)^l\nonumber\\
&=\text{exp}\bigg(-ml\bigg(\frac{\text{ch}_2\,\ca{R}}{z}\bigg)\bigg)\big(\Phi(\overline{Q}\cdot I_{(\eta)})\big)(q^{1/m}/\eta)/\overline{Q}\label{eq:intercomp}.
\end{align}

By an elementary computation using the fact that $m\cdot\text{log}(\eta)=0$, one can show that for any series $\mb{f}$ in $q$ and $Q$, we have 
\begin{equation}\label{eq:claim1}
\big(\Phi(\mb{f})\big)\big(q^{1/m}/\eta\big)=\Phi^m\ca{D}_\eta\mb{f}(q^{1/m}/\eta),
\end{equation}
where the operator $\ca{D}_\eta$ is defined by
\[
\ca{D}_\eta=\prod_{i=1}^n\text{exp}\bigg(l\bigg(-f_i\big(\text{log}(\eta)(Q_j\partial Q_j)^2\big)+\frac{f_i\big(\text{log}(\eta)Q_j\partial Q_j\big)}{2}-\frac{m-1}{m}\frac{f_i\big(mp_j-zQ_j\partial_{Q_j}\big)}{2}\bigg) \bigg).
\]
Here the substitution $q\mapsto q^{1/m}/\eta$ corresponds to the change $z\mapsto z/m-\text{log}(\eta)$ in the expression of $\Phi$.

It follows from (\ref{eq:claim1}) that (\ref{eq:intercomp}) equals
\begin{equation}\label{eq:midstep1}
\big(D_m\Phi^m\ca{D}_\eta\big(\overline{Q}\cdot I_{\eta}(q^{1/m}/\eta)\big)\big)/\overline{Q}\in D_m\Phi^m\ca{D}_\eta\Box_\eta\Box_m^{-1}\ca{T}_m(I_{(1)}).
\end{equation}
Since all the operators above have constant coefficients (i.e. independent of $Q$), they commute. We claim that $\ca{D}_\eta$ preserves $\ca{T}_m(I_{(1)})$. This is because by definition, we have
\[
\ca{D}_\eta\ca{T}_m(I_{(1)})=\ca{D}_\eta\Psi^m\circ S(q,Q)\circ\Psi^{1/m}\ca{K}^1_+.
\]
Let 
\[
\ca{D}_{\eta,1}=\prod_{i=1}^n\text{exp}\big(l\big(-f_i\big(\text{log}(\eta)(Q_j\partial Q_j)^2\big)+f_i\big(\text{log}(\eta)Q_j\partial Q_j\big)/2\big)\big)
\]
and
\[\ca{D}_{\eta,2}=\prod_{i=1}^n\text{exp}\big(l\big(-(m-1)f_i\big(mp_j-zQ_j\partial_{Q_j}\big)/(2m)\big) \big)
\]
be the two factors of $\ca{D}_\eta$. Then it is easy to check that the first factor $\ca{D}_{\eta,1}$ commutes with $\Psi^m\circ S(q,Q)\circ\Psi^{1/m}$, and hence preserves $\ca{T}_m(I_{(1)})$. The second factor $\ca{D}_{\eta,2}$ satisfies the commutation relation:
\[
\ca{D}_{\eta,2}\Psi^m=\Psi^m\prod_{i=1}^n\text{exp}\big(l\big(-(m-1)f_i\big(p_j-zQ_j\partial_{Q_j}\big)/(2m)\big) \big).
\]
According to \cite[Corollary 1]{givental2}, the second operator on the RHS preserves the tangent space $S(q,Q)\circ\Psi^{1/m}\ca{K}^1_+$. Therefore we have shown that the space $\ca{T}_m(I_{(1)})$ is $\ca{D}_\eta$-invariant. 

We can further simplify the space on the RHS of (\ref{eq:midstep1}) as follows
\begin{align*}
D_m\Phi^m\ca{D}_\eta\Box_\eta\Box_m^{-1}\ca{T}_m(I_{(1)})&=D_m\Phi^m\Box_\eta\Box_m^{-1}\ca{D}_\eta\ca{T}_m(I_{(1)})\\
&=D_m\Phi^m\Box_\eta\Box_m^{-1}\ca{T}_m(I_{(1)})\\
&=D_m\Phi^m\Box_\eta\Box_m^{-1}(\Phi^m)^{-1}D_m^{-1}(\ca{T}_m^{R,l}(\widetilde{I}^{R,l}_{(1)}))\\
&=\Box_\eta\Box_m^{-1}(\ca{T}_m^{R,l}(\widetilde{I}^{R,l}_{(1)})).
\end{align*}
This concludes the proof.

\end{proof}
\begin{remark}\label{orbmap2}Suppose the target is an orbifold. As explained in Remark \ref{orbmap1}, the adelic characterization of points on the untwisted sector of the Lagrangian cone of $X$ is the same as the one given in Theorem \ref{levelladele}. Using the same proof as above, we can show that if $I$ is a point on the untwisted sector of $\ca{L}_{S_\infty}$, then the determinantal modification $I^{R,l}$ lies on the untwisted sector of $\ca{L}^{R,l}_{S_\infty}$.
\end{remark}

\section{Toric mirror theorem and mock theta functions}\label{mirrorandmock}
In this section, we first recall a mirror theorem proved by Givental \cite{givental15} for ordinary quantum $K$-theory of toric varieties. This corresponds to Theorem \ref{weakmirror} in the level 0 case. By combining Givental's result with Theorem \ref{detmodify} proved in the previous section, we obtain a toric mirror theorem for level-$l$ quantum $K$-theory. In some simple cases, we recover Ramanujan's mock theta functions from toric $I$-functions with nontrivial level structure.

\subsection{$K$-theoretic $I$-function and mock theta function}\label{computeifunc}
In this subsection, we first explicitly compute the (torus-equivariant) small $I$-functions with level structures for toric varieties, using quasimap graph spaces. Then we use torus localization to prove a toric mirror theorem (Theorem \ref{weakmirror}), following Givental \cite{givental15}. In the study of quantum $K$-theory with non-trivial level structures, a remarkable phenomenon is the appearance of Ramanujan's mock theta functions. 

Let $M\cong\bb{Z}^n$ be a $n$-dimensional lattice and let $N$ be its dual lattice. For every complete nonsingular fan $\Sigma\subset N_{\bb{R}}$, we can associate a $n$-dimensional smooth projective variety $X_\Sigma$. We denote by $\Sigma(1)$ the set of 1-dimensional cones in $\Sigma$.  Let $m=|\Sigma(1)|$. Each $\rho\in\Sigma(1)$ determines a Weil divisor $D_\rho$ on $X_\Sigma$ and the Picard group of $X_\Sigma$ is determined by the following short exact sequence:
\begin{equation}\label{eq:charge}
0\rightarrow M\rightarrow\bb{Z}^{\Sigma(1)}\rightarrow\text{Pic}(X_\Sigma)\rightarrow0.
\end{equation}
Here the inclusion is defined by $m\mapsto\sum_\rho\langle m,\rho\rangle D_\rho$. Now let us describe the quotient construction of $X_\Sigma$. Since $\text{Pic}(X_\Sigma)$ is torsion free, we choose an integral basis $\{L_1,\dots,L_s\}$ of it, where $s=m-n$. Then the inclusion map in (\ref{eq:charge}) is given by an integral $s\times n$ matrix $Q=(Q_{a\rho})$ which is called the charge matrix of $X_\Sigma$. Applying $\text{Hom}(-,\bb{C}^*)$ to the exact sequence (\ref{eq:charge}), we get an exact sequence.
\[
1\rightarrow G\rightarrow(\bb{C}^*)^{\Sigma(1)}\rightarrow N\otimes\bb{C}^*\rightarrow 1,
\]
where $G:=\text{Hom}(\text{Pic}(X_\Sigma),\bb{C}^*)\cong(\bb{C}^*)^s$. The first map in the above short exact sequence defines the following $G$-action on $\bb{C}^{\Sigma(1)}$
\begin{equation}\label{eq:act}
{\bf t}\cdot(z_{\rho_1},\dots,z_{\rho_m})=\bigg(\prod_{a=1}^st_a^{Q_{a\rho_1}}z_{\rho_1},\dots,\prod_{a=1}^st_a^{Q_{a\rho_m}}z_{\rho_m}\bigg),
\end{equation}
where ${\bf t}=(t_1,\dots,t_s)\in(\bb{C}^*)^s$.
By choosing an appropriate linearization of the trivial line bundle on $\bb{C}^{\Sigma(1)}$ (see e.g., \cite{dolgachev}, Chapter 12), the semistable and stable loci are equal. We denote this linearized trivial line bundle by $L_\Sigma$ and the stable loci by $U(\Sigma)$. Let $z_\rho$ be the coordinates in $\bb{C}^{\Sigma(1)}$. We define a subvarity 
\[
Z(\Sigma)=\{(z_\rho)\in\bb{C}^{\Sigma(1)}|\prod_{\rho\not\subset\sigma}z_\rho=0,\sigma\in\Sigma\}.
\]
Then we have
\[
U(\Sigma)=\bb{C}^{\Sigma(1)}\backslash Z(\Sigma).
\]
The toric variety $X_\Sigma$ is the geometric quotient $U(\Sigma)/G$. Let $P$ be the principal $G$-bundle $\bb{C}^{\Sigma(1)}\rightarrow[\bb{C}^{\Sigma(1)}/G]$. Let $\pi_i:G\rightarrow \bb{C}^*$ be the projection to the $i$-th component and let $R_j$ be the characters given by ${\bf t}=(t_1,\dots,t_s)\rightarrow\prod_{a=1}^st_a^{Q_{a\rho_j}}$ for $1\leq j\leq m$. Then the line bundles $L_i$ and $\ca{O}(-D_{\rho_j})$ are the restrictions of the associated line bundles of $P$ with the characters $\pi_i$ and $R_j$, respectively, to $X_\Sigma$.

Note that $X_{\Sigma}$ admits a $T^m:=(\bb{C}^*)^{\Sigma(1)}$-action. We denote by $P_i$ and $U_\rho$ the $T^m$-equivariant line bundles corresponding to $L_i$ and $\ca{O}(D_{\rho})$, respectively. In the $T^m$-equivariant $K$-group $K^0_{T^m}(X_\Sigma)\otimes\bb{Q}$, we have the following multiplicative relation:
\[
U_\rho=\prod_{i=1}^sP_i^{\otimes Q_{i\rho}}\Lambda^{-1}_\rho,\]
where $\Lambda_\rho$ are the generators of Repr($T^m$) corresponding to the projection to the component labeled by $\rho$.

Now let us compute the ($T^m$-equivariant) small $I$-function of $X_\Sigma$ with level structures, using the quasimap graph space. Let $\beta\in\text{Hom}_{\bb{Z}}(\text{Pic}^G(\bb{C}^{\Sigma(1)}),\bb{Z})$ be an $L_\Sigma$-effective class. According to \cite[Lemma 3.1.8]{ciocan3}, a point in the quasimap graph space $QG_{0,k}^{\epsilon=0+}(X_\Sigma,\beta)$ is specified by the following data
\[
((C,p_1,\dots,p_k),\{\ca{P}_i|i=1,\dots,s\},\{u_\rho\}_{\rho\in\Sigma(1)},\varphi),
\] 
where 
\begin{itemize}
\item $(C,p_1,\dots,p_s)$ is a connected, at most nodal, curve of genus $0$ and $p_i$ are distinct nonsingular points of $C$,
\item $\ca{P}_i$ are line bundles on $C$ of degree $f_i:=\beta(L_i)$,
\item $u_\rho\in\Gamma(C,\ca{L}_\rho)$, where $\ca{L}_\rho$ is defined by
\[
\ca{L}_\rho:=\otimes_{i=1}^s\ca{P}_i^{\otimes Q_{i\rho}},
\]
\item $\varphi:C\rightarrow\bb{P}^1$ is a regular map such that $\varphi_*[C]=[\bb{P}^1]$.
\end{itemize}
The stability conditions are discussed in Section \ref{quasigraph}. In the case when $(g,k)=(0,0)$, we have $C\cong\bb{P}^1$ and $\ca{P}_i\cong\ca{O}_{\bb{P}^1}(f_i)$. The line bundles $\ca{L}_\rho$ are isomorphic to $\ca{O}_{\bb{P}^1}(\sum_{i=1}^sf_iQ_{i\rho})=\ca{O}_{\bb{P}^1}(\beta_\rho)$, where $\beta_\rho:=\beta(\ca{O}(D_\rho))$. Therefore, a point on $QG_{0,0}^{\epsilon=0+}(X_\Sigma,\beta)$ is specified by sections $\{u_\rho\in\Gamma(\bb{P}^1,\ca{O}_{\bb{P}^1}(\beta_\rho))|\rho\in\Sigma(1)\}$. We choose coordinates $[x_0,x_1]$ on $\bb{P}^1$ and consider the standard action $\bb{C}^*$-action defined by (\ref{eq:actionst}). Let $F_0$ be the distinguished fixed point locus parametrizing quasimaps whose degrees are concentrated only at 0. According to \cite[\textsection{7.2}]{ciocan3}, we have the identification
\begin{align}\label{eq:idf0}
F_0&\cong \underset{\{\rho|\beta_\rho<0\}}{\bigcap} D_\rho\subset X_\Sigma\nonumber\\
(z_\rho\, x_0^{\beta_\rho})&\rightarrow(z_\rho),
\end{align}
where $(z_\rho)$ are the coordinates on $X_\Sigma$.

Let $R$ be a character of $G=(\bb{C}^*)^s$ defined by ${\bf t}\cdot z=\prod_{i=1}^st_i^{r_i}z$ where $r_i\in\bb{Z}$. Recall that the small $I$-function of $X_\Sigma$ of level $l$ and representation $R$ is defined by
\[
I^{R,l}(q)=1+\sum_{a}\sum_{\beta\neq0}Q^\beta\chi\bigg(F_0,\text{ev}^*(\phi_a)\otimes\bigg(\frac{\text{tr}_{\bb{C}^*}\ca{D}^{R,l}}{\text{tr}_{\bb{C}^*}\wedge^*\big(N_{F_0/QG}^{\text{vir}}\big)^\vee}\bigg)\bigg)\phi^a,
\]
It is not difficult to check that under the identification (\ref{eq:idf0}), we can identify the virtual normal bundle $N_{F_0/QG}^{\text{vir}}$ in $K^0(F_0)$ with
\begin{equation}\label{eq:normal}
N_{F_0/QG}^{\text{vir}}=\sum_{\{\rho|\,\beta_\rho>0\}}\sum_{i=1}^{\beta_\rho} \ca{O}(D_\rho)|_{F_0}\otimes\bb{C}_{-i}-\sum_{\{\rho|\,\beta_\rho<0\}}\sum_{i=1}^{\beta_\rho-1} \ca{O}(D_\rho)|_{F_0}\otimes\bb{C}_{i},
\end{equation}
where $\bb{C}_{a}$ denotes the representation of $\bb{C}^*$ on $\bb{C}$ with weight $a\in\bb{Z}$. Let $\ca{P}$ be the universal principal $G$-bundle on $F_0\times\bb{P}^1\subset X_\Sigma\times\bb{P}^1$. Then the associated line bundle $\ca{P}\times_G R$ can be identified with $\otimes_{i=1}^sL_i^{r_i}\otimes\ca{O}_{\bb{P}^1}(\beta_R)$, where $\beta_R:=\sum_{i=1}^sr_if_i$. We denote the line bundle $\otimes_{i=1}^sL_i^{r_i}$ by $\ca{R}$. Let $\pi:F_0\times\bb{P}^1\rightarrow F_0$ be the projection. When $\beta_R\geq0$, we have
\begin{align}
\ca{D}^{R,l}&=\text{det}^{-l}R\pi_*(\ca{R}\otimes\ca{O}_{\bb{P}^1}(\beta_R))\nonumber\\
&=\text{det}^{-l}(\ca{R}\otimes R^0\pi_*(\ca{O}_{\bb{P}^1}(\beta_R)))\nonumber\\
&=\ca{R}^{-l(\beta_R+1)}\otimes\bb{C}_{l\beta_R(\beta_R+1)/2}\label{eq:detbundle}.
\end{align}
When $\beta_R<0$, a similar calculation shows that we have the same formula $\ca{D}^{R,l}=\ca{R}^{-l(\beta_R+1)}\otimes\bb{C}_{l\beta_R(\beta_R+1)/2}$.

We give the explicit formulas of the (torus-equivariant) small $I$-functions of toric varieties in the following proposition.
\begin{proposition}\label{ifunc}
The small $I$-function of a toric variety $X_\Sigma$ of level $l$ and character $R$ is given by
\[
I^{R,l}(q)=1+\sum_{\beta\in\emph{Eff}(X)}Q^\beta\,\ca{R}^{-l\beta_R}\,q^{l\beta_R(\beta_R+1)/2}\prod_{\rho\in\Sigma(1)}\frac{\prod_{j=-\infty}^0(1-\ca{O}(-D_\rho)q^j)}{\prod_{j=-\infty}^{\beta_\rho}(1-\ca{O}(-D_\rho)q^j)},
\]
and its equivariant version is given by
\[
I^{R,l,eq}(q)=1+\sum_{\beta\in\emph{Eff}(X)}Q^\beta\,\tilde{\ca{R}}^{-l\beta_R}\,q^{l\beta_R(\beta_R+1)/2}\prod_{\rho\in\Sigma(1)}\frac{\prod_{j=-\infty}^0(1-U_\rho\,q^j)}{\prod_{j=-\infty}^{\beta_\rho}(1-U_\rho\,q^j)}.
\]
Here $\ca{R}:=\otimes_{i=1}^s{L_i}^{r_i}$ is the line bundle associated to the character $R$, and $\tilde{\ca{R}}=\otimes_{i=1}^s{P_i}^{r_i}$ and $U_\rho$ are the equivariant line bundles corresponding to $\ca{R}$ and $\ca{O}(-D_\rho)$, respectively.
\end{proposition}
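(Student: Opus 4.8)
The plan is to compute $I^{R,l}(q)$ directly from its definition as the $\bb{C}^*$-virtual localization contribution of the distinguished fixed locus $F_0\subset\ca{QG}^{\epsilon=0+}_{0,0}(X_\Sigma,\beta)$, substituting the three geometric inputs already recorded above: the identification \eqref{eq:idf0} of $F_0$ with the toric subvariety $\bigcap_{\{\rho\,\mid\,\beta_\rho<0\}}D_\rho\subset X_\Sigma$, the expression \eqref{eq:normal} for the virtual normal bundle $N^{\text{vir}}_{F_0/\ca{QG}}$, and the formula \eqref{eq:detbundle} for the restriction of $\ca{D}^{R,l}$ to $F_0\times\bb{P}^1$. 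Since the relevant fixed part of the obstruction theory vanishes, the only virtual correction comes from $N^{\text{vir}}_{F_0/\ca{QG}}$, and $\chi$ is the ordinary Euler characteristic over the smooth $F_0$ (its $T^m$-fixed structure being all that is needed in the equivariant case).

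First I would take $\bb{C}^*$-traces term by term. From \eqref{eq:detbundle} one gets $\text{tr}_{\bb{C}^*}\ca{D}^{R,l}=q^{\,l\beta_R(\beta_R+1)/2}\,\ca{R}^{-l(\beta_R+1)}$, the cases $\beta_R\ge0$ and $\beta_R<0$ producing the same expression. Because the $K$-theoretic Euler class $\wedge^*$ is multiplicative, \eqref{eq:normal} gives
\[
\text{tr}_{\bb{C}^*}\wedge^*\bigl(N^{\text{vir}}_{F_0/\ca{QG}}\bigr)^\vee=\frac{\prod_{\{\rho\,\mid\,\beta_\rho>0\}}\prod_{i=1}^{\beta_\rho}\bigl(1-\ca{O}(-D_\rho)q^{i}\bigr)}{\prod_{\{\rho\,\mid\,\beta_\rho<0\}}\prod_{i=1}^{-\beta_\rho-1}\bigl(1-\ca{O}(-D_\rho)q^{-i}\bigr)},
\]
all line bundles restricted to $F_0$. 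Hence the $\beta$-summand of $I^{R,l}$ equals $\sum_a\chi\bigl(F_0,\text{ev}^*\phi_a\otimes E_\beta\bigr)\phi^a$ with
\[
E_\beta=q^{\,l\beta_R(\beta_R+1)/2}\,\ca{R}^{-l(\beta_R+1)}\,\frac{\prod_{\{\rho\,\mid\,\beta_\rho<0\}}\prod_{i=1}^{-\beta_\rho-1}\bigl(1-\ca{O}(-D_\rho)q^{-i}\bigr)}{\prod_{\{\rho\,\mid\,\beta_\rho>0\}}\prod_{i=1}^{\beta_\rho}\bigl(1-\ca{O}(-D_\rho)q^{i}\bigr)}.
\]

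Next I would carry out the pushforward. Writing $\iota\colon F_0\hookrightarrow X_\Sigma$ for the inclusion (this is $\text{ev}$), the projection formula gives $\chi(F_0,\text{ev}^*\phi_a\otimes E_\beta)=\chi(X_\Sigma,\phi_a\otimes\iota_*E_\beta)$; and since $\{\phi^a\}$ is the basis dual to $\{\phi_a\}$ with respect to the twisted pairing $(u,v)^{R,l}=\chi(u\otimes v\otimes(\text{det}\,\ca{R})^{-l})$, a short check shows $\sum_a\chi(X_\Sigma,\phi_a\otimes F)\phi^a=F\otimes(\text{det}\,\ca{R})^{l}=F\otimes\ca{R}^{l}$ for every $F$. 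Taking $F=\iota_*E_\beta$, pulling out of $\iota_*$ the classes restricted from $X_\Sigma$ (the factor $\ca{R}^{-l(\beta_R+1)}$ and the $\{\beta_\rho>0\}$-denominator) via the projection formula, and evaluating $\iota_*(1)=\prod_{\{\rho\,\mid\,\beta_\rho<0\}}\bigl(1-\ca{O}(-D_\rho)\bigr)$ by the $K$-theoretic self-intersection formula for the transversal intersection of the $D_\rho$ with $\beta_\rho<0$ (whose normal bundle in $X_\Sigma$ is $\bigoplus_{\{\rho\,\mid\,\beta_\rho<0\}}\ca{O}(D_\rho)|_{F_0}$), the $\beta$-summand becomes
\[
\ca{R}^{-l\beta_R}\,q^{\,l\beta_R(\beta_R+1)/2}\;\frac{\prod_{\{\rho\,\mid\,\beta_\rho<0\}}\bigl(1-\ca{O}(-D_\rho)\bigr)\prod_{i=1}^{-\beta_\rho-1}\bigl(1-\ca{O}(-D_\rho)q^{-i}\bigr)}{\prod_{\{\rho\,\mid\,\beta_\rho>0\}}\prod_{i=1}^{\beta_\rho}\bigl(1-\ca{O}(-D_\rho)q^{i}\bigr)}.
\]
A case check for $\beta_\rho>0$, $\beta_\rho=0$, $\beta_\rho<0$ shows this finite product is exactly the regularized ratio $\prod_{\rho}\bigl(\prod_{j=-\infty}^{0}(1-\ca{O}(-D_\rho)q^{j})\bigr)\big/\bigl(\prod_{j=-\infty}^{\beta_\rho}(1-\ca{O}(-D_\rho)q^{j})\bigr)$, which is the asserted formula. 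The $T^m$-equivariant statement follows by running the identical argument with the $\bb{C}^*\times T^m$-equivariant virtual localization formula, replacing $\ca{O}(-D_\rho)$ and $\ca{R}$ by their equivariant lifts $U_\rho$ and $\tilde{\ca{R}}$; the plain formula is its $T^m$-nonequivariant specialization.

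The argument is structurally routine once \eqref{eq:idf0}--\eqref{eq:detbundle} are in hand, so the only places I expect genuine care to be required are the sign-and-weight bookkeeping in the $\beta_\rho<0$ (and $\beta_R<0$) cases, where $N^{\text{vir}}$ and $\ca{D}^{R,l}$ pull in opposite directions, and the tracking of the twisted-pairing factor $\ca{R}^{l}$ so that it cancels $\ca{R}^{-l(\beta_R+1)}$ down precisely to $\ca{R}^{-l\beta_R}$. These are exactly the steps that produce the characteristic power $q^{\,l\beta_R(\beta_R+1)/2}$, which is the source of the mock theta functions in the propositions that follow.
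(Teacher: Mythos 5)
Your proposal is correct and is essentially the paper's own argument: the paper's proof of Proposition \ref{ifunc} simply states that the formula "follows easily from (\ref{eq:normal}) and (\ref{eq:detbundle})," with the one substantive remark being exactly the point you isolate — that one factor of $\ca{R}^{-l}$ in $\ca{D}^{R,l}|_{F_0}=\ca{R}^{-l(\beta_R+1)}\otimes\bb{C}_{l\beta_R(\beta_R+1)/2}$ is absorbed by the twisted pairing (Convention \ref{changepairing}), leaving $\ca{R}^{-l\beta_R}$. Your fleshed-out version (traces, the $K$-theoretic Euler class of $N^{\vir}$, the projection formula with $\iota_*(1)=\prod_{\{\rho\mid\beta_\rho<0\}}(1-\ca{O}(-D_\rho))$, and the case check matching the regularized infinite products) correctly supplies the details the paper leaves implicit.
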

\begin{proof}
The proposition follows easily from (\ref{eq:normal}) and (\ref{eq:detbundle}). Note that one factor $\ca{R}^{-l}$ in $(\ref{eq:detbundle})$ disappears due to the change of pairings (see Convention \ref{changepairing}). 
\end{proof}

By extending Givental's localization argument in \cite{givental15} to the setting with level structure, we prove a toric mirror theorem. Now let us restate Theorem \ref{weakmirror}.
\begin{theorem}\label{lamemirror}
Assume that $X_\Sigma$ is a smooth quasi-projective toric variety. Let $I^{R,l,eq}(q)$ be the level-$l$ torus-equivariant small $I$-function given in Proposition \ref{ifunc}. Then the series $(1-q)I^{R,l,eq}(q)$ lies on the cone $\ca{L}_{S_\infty}^{R,l,eq}$ in the symmetrized torus-equivariant quantum K-theory of level $l$ of $X_\Sigma$. \end{theorem}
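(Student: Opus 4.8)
The plan is to reduce Theorem \ref{lamemirror} to the conjunction of two already-available results: Givental's level-$0$ toric mirror theorem from \cite{givental15} and the determinantal modification result of Theorem \ref{detmodify}. The key observation is that the explicit formula for the level-$l$ equivariant $I$-function in Proposition \ref{ifunc} differs from the level-$0$ equivariant $I$-function precisely by the determinantal factor appearing in Theorem \ref{detmodify1}. Concretely, write $I^{eq}(q) = 1 + \sum_{\beta}Q^\beta\prod_{\rho}\frac{\prod_{j\le 0}(1-U_\rho q^j)}{\prod_{j\le\beta_\rho}(1-U_\rho q^j)}$ for the level-$0$ equivariant $I$-function of $X_\Sigma$; by \cite{givental15} (the $S_\infty$-version, which is stated there for toric varieties), $(1-q)I^{eq}(q)$ lies on the cone $\ca{L}_{S_\infty}^{eq}$ of level $0$ in the symmetrized torus-equivariant quantum $K$-theory. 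Then I would identify, term by term in $Q^\beta$, the factor
\[
\tilde{\ca{R}}^{-l\beta_R}\,q^{l\beta_R(\beta_R+1)/2}
\]
that multiplies the $\beta$-th summand of $I^{R,l,eq}(q)$ with the determinantal modifier $\prod_i\big(L_i^{-\beta_i}q^{(\beta_i+1)\beta_i/2}\big)^l$ of Theorem \ref{detmodify1}. Here one uses that $\ca{R} = \otimes_i L_i^{r_i}$ is a single line bundle, so its unique $K$-theoretic Chern root is $\tilde{\ca{R}}$ itself, $\beta_R = \int_\beta c_1(\ca{R}) = \sum_i r_i f_i$, and the exponent $\beta_R(\beta_R+1)/2$ matches $(\beta_i+1)\beta_i/2$ with $\beta_i$ replaced by $\beta_R$ — i.e. the single-Chern-root instance of the general formula. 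Once this identification is in place, $(1-q)I^{R,l,eq}(q) = \big((1-q)I^{eq}(q)\big)^{R,l}$ in the notation of Theorem \ref{detmodify1}, and Theorem \ref{detmodify} immediately gives that it lies on $\ca{L}_{S_\infty}^{R,l,eq}$.

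The steps, in order, would be: (1) invoke \cite[Theorem]{givental15} (in the permutation-equivariant / symmetrized form) to place $(1-q)I^{eq}(q)$ on the level-$0$ cone $\ca{L}_{S_\infty}^{eq}$; (2) carefully write out both $I$-functions from Proposition \ref{ifunc} and confirm that the $Q^\beta$-coefficients agree up to exactly the scalar-valued (in the line-bundle sense) factor $\tilde{\ca{R}}^{-l\beta_R}q^{l\beta_R(\beta_R+1)/2}$, i.e. that multiplying $I^{eq}$ coefficient-wise by this factor produces $I^{R,l,eq}$; (3) match this factor with the determinantal modifier $\prod_i(L_i^{-\beta_i}q^{(\beta_i+1)\beta_i/2})^l$ of Theorem \ref{detmodify1}, noting that for a line bundle $\ca{R}$ there is a single Chern root and $\beta_i = \beta_R$; (4) apply Theorem \ref{detmodify} to conclude. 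One should also remark that Theorem \ref{detmodify} was proved for compact $X$ but extends verbatim to the quasi-projective toric setting by working torus-equivariantly — the adelic characterization (Theorem \ref{levelladele}) and the fake/twisted-cone manipulations in its proof are all compatible with equivariant localization, exactly as in \cite{givental15}, so no new input is needed there.

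The main obstacle I anticipate is step (2)–(3): being precise about the bookkeeping of Chern roots and signs. Proposition \ref{ifunc} is stated with $\ca{R} = \otimes_i L_i^{r_i}$ genuinely a line bundle, whereas Theorem \ref{detmodify1} is phrased for a general vector bundle $\ca{R}$ with Chern roots $L_i$ and $\beta_i := \int_\beta c_1(L_i)$; one must verify that specializing the latter to a line bundle reproduces exactly the former's exponent $l\beta_R(\beta_R+1)/2$ and the factor $\tilde{\ca{R}}^{-l\beta_R}$, with no stray factor of $\ca{R}^{-l}$ — indeed Proposition \ref{ifunc} already flags that one factor $\ca{R}^{-l}$ is absorbed by the change of pairings of Convention \ref{changepairing}, and the same bookkeeping must be tracked here so that the normalization used in Theorem \ref{detmodify1} (which also works in the rescaled loop space, cf. Convention \ref{changepairing}) agrees. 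A secondary, more cosmetic point is reconciling the two sign conventions for $q$-powers: Theorem \ref{detmodify1} as restated uses $q^{(\beta_i+1)\beta_i/2}$ while its proof writes $q^{\beta_i(\beta_i-1)/2}$; one should fix the convention and check it against Proposition \ref{ifunc}'s $q^{l\beta_R(\beta_R+1)/2}$, which comes from (\ref{eq:detbundle}). Once these normalizations are pinned down, the proof is a one-line citation of Theorem \ref{detmodify} applied to Givental's level-$0$ mirror theorem.
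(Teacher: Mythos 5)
Your reduction is not the paper's argument, and it has a gap precisely at the point you wave away. The paper does not apply Theorem \ref{detmodify} to $X_\Sigma$ itself. Instead it characterizes membership in the equivariant cone $\ca{L}_{S_\infty}^{R,l,eq}$ by torus-fixed-point data, extending the recursion criterion of \cite{givental12,givental15} to level $l$: a point $f=\sum_\alpha f^{(\alpha)}\phi_\alpha$ lies on the cone if and only if (1) each restriction $f^{(\alpha)}$, viewed as a meromorphic function with poles at roots of unity, lies on the cone $\ca{L}^{pt}_{S_\infty}$ of the point target, and (2) away from $0$, $\infty$ and roots of unity, $f^{(\alpha)}$ has only simple poles at $q=U_\rho(\alpha)^{-1/m}$ whose residues satisfy a recursion whose coefficient $C_{\alpha\rho}(m)$ carries an extra level-$l$ determinant factor. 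Condition (1) is then verified by applying Theorem \ref{detmodify} only to the \emph{point} target (where the level-$l$ and level-$0$ cones coincide, since $\ca{D}^{R,l}$ is topologically trivial there), starting from Givental's statement that $(1-q)\widetilde{I}^{(\alpha)}$ lies on $\ca{L}^{pt}_{S_\infty}$; condition (2) is checked by a direct manipulation of the factor $\ca{R}(\alpha)^{-l\beta_R}\,q^{l\beta_R(\beta_R+1)/2}$ against the modified recursion coefficient.

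The gap in your version: Theorem \ref{detmodify} rests on the adelic characterization, Theorem \ref{levelladele}, which is proved in Section \ref{conechar} under the standing assumption that $X$ is smooth and \emph{projective} --- the virtual Kawasaki Riemann--Roch formula is applied to proper moduli spaces of stable maps. Theorem \ref{lamemirror} concerns quasi-projective, in general non-compact, toric targets, for which the moduli spaces are proper only after passing to torus-fixed loci; your assertion that the adelic machinery and the twisted-cone manipulations extend ``verbatim'' to this equivariant non-compact setting is exactly the nontrivial point, and the paper's proof is structured so as never to need it, by pushing the determinantal-modification argument down to the point target and handling the geometry of $X_\Sigma$ through the residue recursion. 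For projective $X_\Sigma$ your argument does essentially go through (equivariant parameters can be absorbed into the $\lambda$-algebra $\Lambda$), and there it is a shorter route; but as a proof of the theorem as stated it is incomplete unless you actually supply the equivariant/non-compact extension of Theorems \ref{levelladele} and \ref{detmodify}. Your bookkeeping in steps (2)--(3) --- single Chern root, $\beta_i=\beta_R$, the rescaling of Convention \ref{changepairing}, and the $q^{\beta_R(\beta_R+1)/2}$ versus $q^{\beta_R(\beta_R-1)/2}$ convention --- is otherwise consistent with Proposition \ref{ifunc}.
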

\begin{proof}
For simplicity, we denote by $I$ the $I$-function $I^{R,l,eq}$ of $X_\Sigma$. Let $\{\phi_\alpha\}_{\alpha\in X^{T^m}_\Sigma}$ be the fixed point basis of $K^0_{T^m}(X_\Sigma)$ and let $\{\phi^\alpha\}$ be the dual basis with respect to the pairing (\ref{eq:twistedpairing}). For each fixed point $\alpha$, we denote by $J(\alpha)\subset \Sigma(1)$ the cardinality-$s$ subset such that $\alpha$ equals the intersection $\cap_{\rho\notin J(\alpha)}D_{\rho}$. Write $I=\sum_{\alpha}I^{(\alpha)}\phi_\alpha$. We denote by $U_\rho(\alpha)$ and $\ca{R}(\alpha)$ the restrictions of $U_\rho$ and $\ca{R}$ to the fixed point $\alpha$, respectively. For $\rho\in J(\alpha)$, we have $U_\rho(\alpha)=1$. Hence $I^{(\alpha)}$ can be explicitly written as 
\[
I^{(\alpha)}(q)=1+\sum_{\beta\in\text{Eff}'(X_\Sigma)}Q^\beta\frac{\ca{R}(\alpha)^{-l\beta_R}\,q^{l\beta_R(\beta_R+1)/2}}{\prod_{\rho\in J(\alpha)}\prod_{j=1}^{\beta{\rho}}(1-q^j)}\,\prod_{\rho\notin J(\alpha)}\frac{\prod_{j=-\infty}^0(1-U_\rho(\alpha)\,q^j)}{\prod_{j=-\infty}^{\beta_\rho}(1-U_\rho(\alpha)\,q^j)}.
\]
Here $\text{Eff}'(X_\Sigma)$ denotes the semigroup of effective curve classes $\beta$ such that $\beta_\rho\geq0$. The terms with $\beta_\rho<0$ disappear because there is a factor $(1-q^0)$ in the numerators.

We first observe that for the point target, the cone $\ca{L}_{S_\infty}^{pt,R,l}$ of level $l$ coincides with the cone $\ca{L}_\infty^{pt}$ of level 0. This is because in the case of the point target, the determinant line bundle $\ca{D}^{R,l}$ is always topologically trivial (with possible equivariant weights). Combining this observation with the fact that the level structure $\ca{D}^{R,l}$ splits ``nicely'' among nodal strata, we can extend the argument in \cite{givental12} to prove that a point $f=\sum_\alpha f^{(\alpha)}\phi_\alpha$ lies on $\ca{L}_{S_\infty}^{R,l,eq}$ if and only iff the following are satisfied:
\begin{enumerate}
\item When expanded as meromorphic functions with poles only at roots of unity, $f^{(\alpha)}$ lie on the cone $\ca{L}_{S_\infty}^{pt}$ in the permutation-equivariant quantum $K$-theory of the point target space.

\item Away from $q=0,\,\infty$, and roots of unity, $f^{(\alpha)}$ may have at most simple poles at $q= U_\rho(\alpha)^{-1/m},\,\rho\notin J(\alpha),\,m=1,2,\dots$, for generic values of $\Lambda_1,\dots,\Lambda_m$. The residues satisfy the following recursion relations
\[
\text{Res}_{q=U_\rho(\alpha)^{-1/m}}f^{(\alpha)}(q)\frac{dq}{q}=-\frac{\phi^{\alpha}Q^{md_{\alpha\rho}}}{C_{\alpha\rho}(m)}f^{(\rho)}\big(U_\rho(\alpha)^{-1/m}\big).
\]
Here $C_{\alpha\rho}(m)=\lambda_{-1}(T_p\,\overline{\ca{M}}_{0,2}(X_\Sigma,m{d_{\alpha\rho}}))\cdot\big(\ca{R}(\alpha)^{-md^R_{\alpha\rho}}\,q^{md^R_{\alpha\rho}(md^R_{\alpha\rho}+1)/2}\big)^l$, where
\begin{enumerate}
\item $T_p$ denotes the virtual tangent space to the moduli space at the point $p$ represented by the $m$-multiple cover of the one-dimensional orbit connecting $\alpha$ and $\rho$. The explicit formula of the equivariant weights of the $K$-theoretic Euler class $\lambda_{-1}(T_p)$ is given in \cite{givental15}.
\item $d_{\alpha\rho}$ denotes the degree of the one-dimensional orbit connecting $\alpha$ and $\rho$.
\item $d^R_{\alpha\rho}:=\langle d_{\alpha\rho},c_1(\ca{R})\rangle$.
\end{enumerate}
\end{enumerate}

We want to show that $(1-q)I$ satisfies (1) and (2). It is proved in the main theorem\footnote{In \cite{givental15}, the $I$-function is defined to sum over all $\beta\in\bb{Z}^m$. However, the same argument works if we restrict the summation to curve classes in the semigroup $\text{Eff}(X_\Sigma)$.} of \cite{givental15} that the series
\[
\widetilde{I}^{(\alpha)}=1+\sum_{\beta\in\text{Eff}'(X_\Sigma)}\frac{Q^\beta}{\prod_{\rho\in J(\alpha)}\prod_{j=1}^{\beta{\rho}}(1-q^j)}\,\prod_{\rho\notin J(\alpha)}\frac{\prod_{j=-\infty}^0(1-U_\rho(\alpha)\,q^j)}{\prod_{j=-\infty}^{\beta_\rho}(1-U_\rho(\alpha)\,q^j)}
\]
represents a value of $\ca{J}^{pt}_{S_{\infty}}(\mb{t}(q),Q)/(1-q)$, i.e., $(1-q)\widetilde{I}^{(\alpha)}$ lies on the cone $\ca{L}^{pt}_{S_\infty}$. Note that $I^{(\alpha)}$ is obtained from $\widetilde{I}^{(\alpha)}$ by a ``determinantal'' modification. Therefore, it follows from Theorem \ref{detmodify} that $(1-q)I^{(\alpha)}$ lies on $\ca{L}^{pt,R,l}_{S_\infty}=\ca{L}^{pt}_{S_\infty}$. 

To prove $(1-q)I^{(\alpha)}$ satisfies the second condition, we rewrite $\ca{R}(\alpha)^{-l\beta_R}\,q^{l\beta_R(\beta_R+1)/2}$ as
\begin{equation}\label{eq:rewritedet}
\frac{\prod_{j=-\infty}^{\beta_R}\big(\ca{R}(\alpha)^{-1}\,q^j\big)^l}{\prod_{j=-\infty}^{0}\big(\ca{R}(\alpha)^{-1}\,q^j\big)^l}.
\end{equation}
Note that for all $j$, we have
$
\ca{R}(\alpha)=\ca{R}(\beta)\lambda^{-d^R_{\alpha\rho}}$, where $\lambda=U_\rho(\alpha)$.
Hence at $q=\lambda^{-1/m}$, 
\[
\ca{R}(\alpha)^{-1}q^{j}=\ca{R}(\beta)^{-1}\,q^{j-md^R_{\alpha\rho}}.
\]
The formula (\ref{eq:rewritedet}) is equivalent to 
\[
\ca{R}(\alpha)^{-mld^R_{\alpha\rho}}\,q^{mld^R_{\alpha\rho}(md^R_{\alpha\rho}+1)/2}\frac{\prod_{j=-\infty}^{\beta_R-md_{\alpha\rho}^R}\big(\ca{R}(\beta)^{-1}\,q^j\big)^l}{\prod_{j=-\infty}^{0}\big(\ca{R}(\beta)^{-1}\,q^j\big)^l}
\]
at $q=\lambda^{-1/m}$. Combing the above equivalence with the result on page 10 of \cite{givental15}, we obtain
\begin{align*}
(1-q)I^{(\alpha)}(q)=\frac{Q^{md_{\alpha\rho}}}{1-q^m\lambda}\,\frac{\phi^\alpha}{C_{\alpha\rho}(m)}(1-q)I^{(\rho)}(q),
\end{align*}
which is equivalent to the residue formula in Condition (2).

Since $I$ is defined over the $\lambda$-algebra $\bb{Z}[\Lambda_1^{\pm},\dots,\Lambda_m^{\pm}][[Q]]$, it takes value in the symmetrized theory (see Remark \ref{symmetrized}).
\end{proof}

When $X_\Sigma$ is projective, we may pass to the non-equivariant limit in Theorem \ref{lamemirror} to obtain
\begin{corollary}
When $X_\Sigma$ is a smooth projective toric variety, the level-$l$ small $I$-function $I^{R,l}$ given in Proposition \ref{ifunc} lies on the cone $\ca{L}_{S_\infty}^{R,l}$ in the symmetrized quantum K-theory of level $l$.
\end{corollary}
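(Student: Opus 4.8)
The plan is to deduce this from the torus-equivariant mirror theorem (Theorem~\ref{lamemirror}) by passing to the non-equivariant limit. Since $X_\Sigma$ is smooth and projective it is in particular proper, so there is a forgetful specialization homomorphism $\sigma\colon K^0_{T^m}(X_\Sigma)\otimes\bb{Q}\to K^0(X_\Sigma)\otimes\bb{Q}$ which sends the equivariant line bundles $P_i$, $\tilde{\ca{R}}$ and $U_\rho$ of Proposition~\ref{ifunc} to their underlying line bundles $L_i$, $\ca{R}$ and $\ca{O}(-D_\rho)$, and which kills the equivariant parameters $\Lambda_\rho$. Extending $\sigma$ coefficient-wise to the loop spaces (and trivially on $q$ and the Novikov variables), Proposition~\ref{ifunc} shows at once that $\sigma$ carries $I^{R,l,eq}(q)$ to $I^{R,l}(q)$, hence $(1-q)I^{R,l,eq}$ to $(1-q)I^{R,l}$. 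First I would record that $I^{R,l}$ is genuinely an element of $\ca{K}$: for a projective toric variety the positive-degree part of $K^0(X_\Sigma)\otimes\bb{Q}$ is nilpotent, so each factor $1-\ca{O}(-D_\rho)q^j$ with $j\neq 0$ is invertible in $K^0(X_\Sigma)\otimes\bb{C}(q)$, while the factors with $j\leq 0$ cancel against the numerator; thus the non-equivariant limit is well defined.

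Next I would verify that $\sigma$ sends the symmetrized torus-equivariant cone $\ca{L}_{S_\infty}^{R,l,eq}$ into the symmetrized non-equivariant cone $\ca{L}_{S_\infty}^{R,l}$. Here properness of $X_\Sigma$ is the essential point: for a proper target the $T^m$-equivariant $K$-theoretic pushforward to a point of $\ca{O}^{\vir}\otimes\ca{D}^{R,l}\otimes\bigotimes_i\mb{t}(L_i)$ is a genuine (virtual, finite-dimensional) $T^m$-representation, whose character specializes under $\sigma$ to the ordinary non-equivariant Euler characteristic. Hence each permutation-equivariant level-$l$ correlator $\langle\,\cdot\,\rangle_{0,k+1,\beta}^{R,l,eq,S_k}$ specializes to $\langle\,\cdot\,\rangle_{0,k+1,\beta}^{R,l,S_k}$, the twisted pairing $(\,\cdot\,,\,\cdot\,)^{R,l}$ and a choice of dual basis $\{\phi^a\}$ specialize compatibly, and therefore the equivariant $\ca{J}$-function $\ca{J}_{S_\infty}^{R,l,eq,\infty}(\mb{t}(q),Q)$ specializes to $\ca{J}_{S_\infty}^{R,l,\infty}(\sigma\mb{t}(q),Q)$; this gives $\sigma(\ca{L}_{S_\infty}^{R,l,eq})\subset\ca{L}_{S_\infty}^{R,l}$. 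Because $\sigma$ maps $\bb{Z}[\Lambda_1^\pm,\dots,\Lambda_m^\pm][[Q]]$ onto $\bb{Z}[[Q]]$ and commutes with the abstract Adams operations (which act trivially on the $\Lambda_\rho$), the symmetrized structure of Remark~\ref{symmetrized} is preserved under $\sigma$.

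Combining these observations with Theorem~\ref{lamemirror}, which places $(1-q)I^{R,l,eq}$ on $\ca{L}_{S_\infty}^{R,l,eq}$, we conclude $(1-q)I^{R,l}=\sigma\big((1-q)I^{R,l,eq}\big)\in\ca{L}_{S_\infty}^{R,l}$, which is the assertion. The only step that I expect to require genuine care, beyond the routine verification that the various equivariant structures specialize, is this last compatibility of ``lying on the cone'' with $\sigma$: one must ensure that in the non-equivariant limit of $\ca{J}_{S_\infty}^{R,l,eq}$ no collision or cancellation of the equivariant poles at $q=U_\rho(\alpha)^{-1/m}$ produces an element outside $\ca{K}$. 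Since $I^{R,l}$ is exhibited explicitly in Proposition~\ref{ifunc} as an element of $\ca{K}$, this limit is controlled, and closedness of the cone under the specialization then follows from the specialization of the defining correlators --- so I anticipate this to be the main, but ultimately bookkeeping-level, obstacle, the substantive content of the corollary residing entirely in Theorem~\ref{lamemirror}.
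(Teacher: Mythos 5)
Your proposal is correct and follows essentially the same route as the paper, whose entire argument for this corollary is the remark that one passes to the non-equivariant limit in Theorem \ref{lamemirror}. The additional details you supply (the specialization homomorphism on equivariant $K$-theory, properness of $X_\Sigma$ guaranteeing that the equivariant correlators specialize to the non-equivariant ones, and the well-definedness of the limit of $I^{R,l,eq}$) are exactly the bookkeeping implicit in that remark.
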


We denote by St and St$^\vee$ the standard representation and its dual representation of $\bb{C}^*$. As corollaries of Theorem \ref{ifunc}, we give proofs for Proposition \ref{prop1}-\ref{prop3} and \ref{prop4}.
\begin{proof}[Proof of Proposition \ref{prop1}]
Let the target be $X=({\bb{C}}\backslash0)/{\bb{C}}^*$ where the action is the standard action. Then the Proposition follows directly from the Theorem \ref{ifunc}.
\end{proof}

\begin{proof}[Proof of Proposition \ref{prop2}]

Let the target be $X_{a_1,a_2}=({\bb{C}}^2\backslash\{(0,0)\})/{\bb{C}}^*$ with charge vector $(a_1,a_2)$. As mentioned in Remark \ref{firstorb}, Remark \ref{orbmap1} and Remark \ref{orbmap2}, we only consider the untwisted component of the orbifold $I$-function and its formula is given by Theorem \ref{ifunc}. 
\end{proof}

\begin{proof}[Proof of Proposition \ref{prop3}]For positive integers $a,b$, we consider $X_{a,-b}=\{(\bb{C}-0)\times\bb{C}\}/\bb{C}^*$ with charge vector $(a,-b)$. Let $\lambda,\mu$ be the generators of Repr($(\bb{C}^*)^2$) corresponding to the first and second projections of $(\bb{C}^*)^2$ onto its factors. From Theorem \ref{ifunc}, the untwisted component of the orbifold $I$-function is given by 
\begin{align*}I^{\text{St},\,l}_{X_{a,-b}}(q)&=1+\sum_{n\geq 1}\frac{ p^{nl}q^{\frac{n(n-1)l}{2}}(1-p^{-b}\mu^{-1})(1-p^{-b}\mu^{-1}  q^{-1})\cdots (1-p^{-b}\mu^{-1}  q^{1-bn})}{(1-p^a\lambda^{-1}  q)(1-p^a\lambda^{-1} q^2)\cdots (1-p^a\lambda^{-1}  q^{an})}Q^n\\
&=1+\sum_{n\geq 1}(-1)^{bn}\frac{ p^{nl-b^2n}q^{\frac{n(n-1)l-bn(bn-1)}{2}}\mu^{-bn}(1-p^{b}\mu)(1-p^b\mu  q^1)\cdots (1-p^b\mu  q^{bn-1})}{(1-p^a\lambda^{-1}  q)(1-p^a\lambda^{-1} q^2)\cdots (1-p^a\lambda^{-1}  q^{an})}Q^n.
\end{align*}
\end{proof}
\begin{proof}[Proof of Proposition \ref{prop4}]
We consider the target $O(-1)^{\oplus r}_{\bb{P}^{s-1}}=X_{\bf 1, -\bf 1}=\{({\bb{C}}^{s}-0)\times {\bb{C}}^r\}/{\bb{C}}^*$
with the charge vector $(1,1,\cdots, 1, -1, -1, \cdots, -1)$. It follows from Theorem \ref{ifunc} that 
\begin{align*}
I^{\text{St},\,l=1+s}_{X_{\bf 1, -\bf 1}}(q)&=1+\sum_{n\geq1}Q^np^{nl}q^{\frac{n(n-1)l}{2}}\frac{(p^{-1}\mu_1^{-1}, q)_{n}\cdots (p^{-1}\mu_r^{-1}; q)_{n}}
{(p\lambda_1^{-1}q; q)_{n}\cdots (p\lambda_s^{-1}q; q)_n}
\\
&=1+\sum_{n\geq1}(-1)^{nr}\prod_{i=1}^r(p\mu_i)^{-n}p^{(1+s)n}\frac{(p\mu_1, q)_{n}\cdots (p\mu_r; q)_{n}}
{(p\lambda_1^{-1}q; q)_{n}\cdots (p\lambda_s^{-1}q; q)_n}Q^n (q^{\frac{n(n-1)}{2}})^{1+s-r}.
\end{align*}
\end{proof}

\begin{remark}
Traditionally, there are two approaches to prove genus zero mirror theorems: an older approach of Givental-Tonita using adelic descriptions of cones and a more recent wall-crossing approach of Ciocan-Fontanine-Kim using quasimap theory. In the first version
of our paper, we use the wall-crossing approach. Recently, we discovered a gap in the proof. Roughly speaking, the $K$-theoretic version of the polynomiality property \cite[Lemma 7.6.1]{ciocan1} is not strong enough to determine the coefficients of the $K$-theoretic $S$-operators and $J$-functions recursively. In the current version, we switch back to Givental-Tonita's older technique. It is still an interesting question if we can improve the wall-crossing approach to prove the mirror theorem in the $K$-theory setting.

\end{remark}

\bibliographystyle{amsplain.bst}
\bibliography{reference}

\providecommand{\bysame}{\leavevmode\hbox to3em{\hrulefill}\thinspace}
\providecommand{\MR}{\relax\ifhmode\unskip\space\fi MR }
\providecommand{\MRhref}[2]{%
  \href{http://www.ams.org/mathscinet-getitem?mr=#1}{#2}
}
\providecommand{\href}[2]{#2}
\begin{thebibliography}{10}

\bibitem{abramovich5}
Dan Abramovich, Alessio Corti, and Angelo Vistoli, \emph{Twisted bundles and
  admissible covers}, Comm. Algebra \textbf{31} (2003), no.~8, 3547--3618,
  Special issue in honor of Steven L. Kleiman. \MR{2007376}

\bibitem{abramovich3}
Dan Abramovich, Tom Graber, Martin Olsson, and Hsian-Hua Tseng, \emph{On the
  global quotient structure of the space of twisted stable maps to a quotient
  stack}, J. Algebraic Geom. \textbf{16} (2007), no.~4, 731--751. \MR{2357688}

\bibitem{abramovich4}
Dan Abramovich, Tom Graber, and Angelo Vistoli, \emph{Algebraic orbifold
  quantum products}, Orbifolds in mathematics and physics ({M}adison, {WI},
  2001), Contemp. Math., vol. 310, Amer. Math. Soc., Providence, RI, 2002,
  pp.~1--24. \MR{1950940}

\bibitem{abramovich2}
\bysame, \emph{Gromov-{W}itten theory of {D}eligne-{M}umford stacks}, Amer. J.
  Math. \textbf{130} (2008), no.~5, 1337--1398. \MR{2450211}

\bibitem{okounkov2}
Mina Aganagic, Edward Frenkel, and Andrei Okounkov, \emph{Quantum
  {$q$}-{L}anglands correspondence}, Trans. Moscow Math. Soc. \textbf{79}
  (2018), 1--83. \MR{3881458}

\bibitem{okounkov3}
Mina Aganagic and Andrei Okounkov, \emph{Quasimap counts and {B}ethe
  eigenfunctions}, Mosc. Math. J. \textbf{17} (2017), no.~4, 565--600.
  \MR{3734654}

\bibitem{agnihotri}
{S}harad {A}gnihotri, \emph{{{Q}uantum cohomology and the {V}erlinde algebra}},
  Ph.D. Thesis, University of Oxford (1995).

\bibitem{behrend2}
Kai Behrend and Yuri~I. Manin, \emph{Stacks of stable maps and
  {G}romov-{W}itten invariants}, Duke Math. J. \textbf{85} (1996), no.~1,
  1--60. \MR{1412436}

\bibitem{belkale}
Prakash Belkale, \emph{Quantum generalization of the {H}orn conjecture}, J.
  Amer. Math. Soc. \textbf{21} (2008), no.~2, 365--408. \MR{2373354}

\bibitem{chenruan}
Weimin Chen and Yongbin Ruan, \emph{Orbifold {G}romov-{W}itten theory},
  Orbifolds in mathematics and physics ({M}adison, {WI}, 2001), Contemp. Math.,
  vol. 310, Amer. Math. Soc., Providence, RI, 2002, pp.~25--85. \MR{1950941}

\bibitem{ciocan2}
Daewoong Cheong, Ionu\c{t} Ciocan-Fontanine, and Bumsig Kim, \emph{Orbifold
  quasimap theory}, Math. Ann. \textbf{363} (2015), no.~3-4, 777--816.
  \MR{3412343}

\bibitem{ciocan3}
Ionu\c{t} Ciocan-Fontanine and Bumsig Kim, \emph{Moduli stacks of stable toric
  quasimaps}, Adv. Math. \textbf{225} (2010), no.~6, 3022--3051. \MR{2729000}

\bibitem{ciocan1}
\bysame, \emph{Wall-crossing in genus zero quasimap theory and mirror maps},
  Algebr. Geom. \textbf{1} (2014), no.~4, 400--448. \MR{3272909}

\bibitem{ciocan4}
Ionu\c{t} Ciocan-Fontanine, Bumsig Kim, and Davesh Maulik, \emph{Stable
  quasimaps to {GIT} quotients}, J. Geom. Phys. \textbf{75} (2014), 17--47.
  \MR{3126932}

\bibitem{coates3}
Tom Coates, \emph{Riemann-{R}och theorems in {G}romov-{W}itten theory}, PhD
  Thesis, University of California, Berkeley (2003). \MR{2705177}

\bibitem{coates2}
Tom Coates and Alexander Givental, \emph{Quantum cobordisms and formal group
  laws}, The unity of mathematics, Progr. Math., vol. 244, Birkh\"{a}user
  Boston, Boston, MA, 2006, pp.~155--171. \MR{2181805}

\bibitem{coates}
\bysame, \emph{Quantum {R}iemann-{R}och, {L}efschetz and {S}erre}, Ann. of
  Math. (2) \textbf{165} (2007), no.~1, 15--53. \MR{2276766}

\bibitem{dolgachev}
Igor Dolgachev, \emph{Lectures on invariant theory}, London Mathematical
  Society Lecture Note Series, vol. 296, Cambridge University Press, Cambridge,
  2003. \MR{2004511}

\bibitem{gepner1}
Doron Gepner, \emph{Fusion rings and geometry}, Comm. Math. Phys. \textbf{141}
  (1991), no.~2, 381--411. \MR{1133272}

\bibitem{givental3}
Alexander {Givental}, \emph{On the {WDVV} equation in quantum {$K$}-theory},
  Michigan Math. J. \textbf{48} (2000), 295--304. \MR{1786492}

\bibitem{givental4}
Alexander Givental, \emph{Symplectic geometry of {F}robenius structures},
  Frobenius manifolds, Aspects Math., E36, Friedr. Vieweg, Wiesbaden, 2004,
  pp.~91--112. \MR{2115767}

\bibitem{givental12}
Alexander {Givental}, \emph{{Permutation-equivariant quantum K-theory II. Fixed
  point localization}}, preprint (2015), arXiv:1508.04374.

\bibitem{givental13}
\bysame, \emph{{Permutation-equivariant quantum K-theory III. Lefschetz'
  formula on $\overline{\mathcal{M}}_{0,n}/S_n$ and adelic characterization}},
  preprint (2015), arXiv:1508.06697.

\bibitem{givental14}
\bysame, \emph{{Permutation-equivariant quantum K-theory IV.
  $\mathcal{D}_q$-modules}}, preprint (2015), arXiv:1509.00830.

\bibitem{givental15}
\bysame, \emph{{Permutation-equivariant quantum K-theory V. Toric
  $q$-hypergeometric functions}}, preprint (2015), arXiv:1509.03903.

\bibitem{givental16}
\bysame, \emph{{Permutation-equivariant quantum K-theory VI. Mirrors}},
  preprint (2015), arXiv:1509.07852.

\bibitem{givental17}
\bysame, \emph{{Permutation-equivariant quantum K-theory VII. General theory}},
  preprint (2015), arXiv:1510.03076.

\bibitem{givental18}
\bysame, \emph{{Permutation-equivariant quantum K-theory VIII. Explicit
  reconstruction}}, preprint (2015), arXiv:1510.06116.

\bibitem{givental11}
\bysame, \emph{Permutation-equivariant quantum {K}-theory {I}. {D}efinitions.
  {E}lementary {K}-theory of {$\overline{\mathcal{M}}_{0,n}/S_n$}}, Mosc. Math.
  J. \textbf{17} (2017), no.~4, 691--698. \MR{3734658}

\bibitem{givental19}
\bysame, \emph{{Permutation-equivariant quantum K-theory IX. Quantum
  Hirzebruch-Riemann-Roch in all genera}}, preprint (2017), arXiv:1709.03180.

\bibitem{givental21}
\bysame, \emph{{Permutation-equivariant quantum K-theory X. Quantum
  Hirzebruch-Riemann-Roch in genus 0}}, preprint (2017), arXiv:1710.02376.

\bibitem{givental22}
\bysame, \emph{{Permutation-equivariant quantum K-theory XI. Quantum
  Adams-Riemann-Roch}}, preprint (2017), arXiv:1711.04201.

\bibitem{givental2}
Alexander Givental and Valentin Tonita, \emph{The {H}irzebruch-{R}iemann-{R}och
  theorem in true genus-0 quantum {K}-theory}, Symplectic, {P}oisson, and
  noncommutative geometry, Math. Sci. Res. Inst. Publ., vol.~62, Cambridge
  Univ. Press, New York, 2014, pp.~43--91. \MR{3380674}

\bibitem{intriligator}
Kenneth Intriligator, \emph{Fusion residues}, Modern Phys. Lett. A \textbf{6}
  (1991), no.~38, 3543--3556. \MR{1138873}

\bibitem{jockers}
Hans {Jockers} and Peter {Mayr}, \emph{{A 3d Gauge Theory/Quantum K-Theory
  Correspondence}}, preprint (2018), arXiv:1808.02040.

\bibitem{willett}
Anton {Kapustin} and Brian {Willett}, \emph{{Wilson loops in supersymmetric
  Chern-Simons-matter theories and duality}}, preprint (2013), arXiv:1302.2164.

\bibitem{kawasaki}
Tetsuro Kawasaki, \emph{The {R}iemann-{R}och theorem for complex
  {$V$}-manifolds}, Osaka J. Math. \textbf{16} (1979), no.~1, 151--159.
  \MR{527023}

\bibitem{knudsen}
Finn~Faye Knudsen and David Mumford, \emph{The projectivity of the moduli space
  of stable curves. {I}. {P}reliminaries on ``det'' and ``{D}iv''}, Math.
  Scand. \textbf{39} (1976), no.~1, 19--55. \MR{0437541}

\bibitem{smirnov2}
Peter {Koroteev}, Petr~P. {Pushkar}, Andrey {Smirnov}, and Anton~M. {Zeitlin},
  \emph{{Quantum K-theory of Quiver Varieties and Many-Body Systems}}, preprint
  (2017), arXiv:1705.10419.

\bibitem{lee1}
Yuan-Pin Lee, \emph{Quantum {$K$}-theory. {I}. {F}oundations}, Duke Math. J.
  \textbf{121} (2004), no.~3, 389--424. \MR{2040281}

\bibitem{marian1}
Alina Marian and Dragos Oprea, \emph{Counts of maps to {G}rassmannians and
  intersections on the moduli space of bundles}, J. Differential Geom.
  \textbf{76} (2007), no.~1, 155--175. \MR{2312051}

\bibitem{marian2}
\bysame, \emph{The level-rank duality for non-abelian theta functions}, Invent.
  Math. \textbf{168} (2007), no.~2, 225--247. \MR{2289865}

\bibitem{marian3}
\bysame, \emph{G{L} {V}erlinde numbers and the {G}rassmann {TQFT}}, Port. Math.
  \textbf{67} (2010), no.~2, 181--210. \MR{2662866}

\bibitem{okounkov}
Andrei Okounkov, \emph{Lectures on {K}-theoretic computations in enumerative
  geometry}, Geometry of moduli spaces and representation theory, IAS/Park City
  Math. Ser., vol.~24, Amer. Math. Soc., Providence, RI, 2017, pp.~251--380.
  \MR{3752463}

\bibitem{smirnov3}
Andrei {Okounkov} and Andrey {Smirnov}, \emph{{Quantum difference equation for
  Nakajima varieties}}, arXiv e-prints (2016), arXiv:1602.09007.

\bibitem{smirnov1}
Petr~P. {Pushkar}, Andrey {Smirnov}, and Anton~M. {Zeitlin}, \emph{{Baxter
  Q-operator from quantum K-theory}}, preprint (2016), arXiv:1612.08723.

\bibitem{qu1}
Feng Qu, \emph{Virtual pullbacks in {$K$}-theory}, Ann. Inst. Fourier
  (Grenoble) \textbf{68} (2018), no.~4, 1609--1641. \MR{3887429}

\bibitem{RZ1}
Yongbin {Ruan} and Ming {Zhang}, \emph{{Verlinde/Grassmannian Correspondence
  and Rank 2 $\delta$-wall-crossing}}, preprint (2018), arXiv:1811.01377.

\bibitem{su}
Changjian {Su}, Gufang {Zhao}, and Changlong {Zhong}, \emph{{On the K-theory
  stable bases of the Springer resolution}}, preprint (2017), arXiv:1708.08013.

\bibitem{tonita4}
Valentin Tonita, \emph{Twisted orbifold {G}romov-{W}itten invariants}, Nagoya
  Math. J. \textbf{213} (2014), 141--187. \MR{3161407}

\bibitem{tonita1}
\bysame, \emph{A virtual {K}awasaki-{R}iemann-{R}och formula}, Pacific J. Math.
  \textbf{268} (2014), no.~1, 249--255. \MR{3207609}

\bibitem{tonita3}
\bysame, \emph{Twisted {K}-theoretic {G}romov-{W}itten invariants}, Math. Ann.
  \textbf{372} (2018), no.~1-2, 489--526. \MR{3856819}

\bibitem{tonita2}
Valentin {Tonita} and Hsian-Hua {Tseng}, \emph{{Quantum orbifold
  Hirzebruch-Riemann-Roch theorem in genus zero}}, arXiv e-prints (2013),
  arXiv:1307.0262.

\bibitem{tseng2}
Hsian-Hua Tseng, \emph{Orbifold quantum {R}iemann-{R}och, {L}efschetz and
  {S}erre}, Geom. Topol. \textbf{14} (2010), no.~1, 1--81. \MR{2578300}

\bibitem{vafa}
Cumrun Vafa, \emph{Topological mirrors and quantum rings}, Essays on mirror
  manifolds, Int. Press, Hong Kong, 1992, pp.~96--119. \MR{1191421}

\bibitem{witten1}
Edward Witten, \emph{The {V}erlinde algebra and the cohomology of the
  {G}rassmannian}, Geometry, topology, \& physics, Conf. Proc. Lecture Notes
  Geom. Topology, IV, Int. Press, Cambridge, MA, 1995, pp.~357--422.
  \MR{1358625}

\end{thebibliography}
\end{document}